\newcommand{\Z}{\mathbb{Z}}
\newcommand{\1}{\bm{1}}
\newtheorem{thm}{Theorem}[section]
\newtheorem{lem}{Lemma}[section]
\newtheorem{prop}{Proposition}[section]
\newtheorem{cor}{Corollary}[section]
\newtheorem{rem}{Remark}[section]
\begin{document}
\title[Classification of Conformal Vectors in VOA and Automorphism]{On Classification of Conformal Vectors in Vertex Operator Algebra and the Vertex Algebra Automorphism Group}
\author{YUTO MORIWAKI}
\date{}
\address[Y. Moriwaki]{Graduate School of Mathematical Science, The University of Tokyo, 3-8-1
Komaba, Meguro-ku, Tokyo 153-8914, Japan}
\email {moriwaki@ms.u-tokyo.ac.jp}%
\maketitle

\begin{abstract}
Herein we study conformal vectors of a $\mathbb{Z}$-graded vertex algebra of  (strong) CFT type.
We prove that the full vertex algebra automorphism group transitively acts on the set of the conformal vectors of strong CFT type
 if the vertex algebra is simple.  The statement is equivalent to the uniqueness of self-dual vertex operator algebra structures of a simple vertex algebra. 
As an application, we show that the full vertex algebra automorphism group of a simple vertex operator algebra of strong CFT type uniquely decomposes into the product of certain two subgroups and the vertex operator algebra automorphism group.
Furthermore, we prove that the full vertex algebra automorphism group of the moonshine module over the field of real numbers is the Monster.
\end{abstract}

\section{Introduction}

The notions of vertex algebras and their conformal vectors were originally developed by
Borcherds \cite{Bo1}.
A vertex algebra with a distinguished conformal vector is called a vertex operator algebra (VOA) \cite{FLM},
which is a mathematical axiomatization of a chiral algebra appearing in two-dimensional conformal field theory.
Different conformal vectors in a vertex algebra generally yield different VOA structures and conformal field theories.
Thus, the classification of conformal vectors in vertex algebras is a fundamental issue in the theory of VOAs.

Matsuo and Nagatomo \cite{MN} have completely determined the set of conformal vectors of the Heisenberg VOA of rank one, which is one of the simplest non-trivial examples of a VOA.
In this case, the conformal vectors are parameterized by two variables. In general, vertex algebras have conformal vectors that are parameterized by infinitely many variables, as observed by Matsuo and Nagatomo \cite{MN}.
As it is difficult to express all conformal vectors explicitly,
we attempt to classify them under the action of the vertex algebra automorphism group, which suffices to classify VOA structures on a vertex algebra up to isomorphism.

To be more precise, let $V$ be a vertex algebra, and let $\mathrm{Aut}\,V$ denote the group of vertex algebra automorphisms of $V$. A conformal vector $a \in V$ is said to be  of strong CFT type if $(V,a)$ is a VOA of strong CFT type.
Here, a VOA of strong CFT type, introduced in \cite{DM}, is roughly a self-dual $\mathbb{N}$-graded VOA (see Section 2 for the precise definition).
We prove that if a vertex algebra $V$ is simple, then $\mathrm{Aut}\,V$ acts on the set of conformal vectors of strong CFT type transitively (Theorem \ref{main}). Hence, a simple vertex algebra has at most one self-dual VOA structure up to isomorphism.
Furthermore, we determine the set of conformal vectors of CFT type for a simple VOA of strong CFT type (Theorem \ref{classify}).

%
As an application of Theorem \ref{main}, we investigate the structure of the full vertex algebra automorphism group.
Hereafter, $(V=\bigoplus_{n \geq 0} V_n ,\omega)$ is assumed to be a simple VOA of strong CFT type.
The VOA automorphism group $\mathrm{Aut}_\omega\,V$ is defined as the stabilizer of $\omega$ in $\mathrm{Aut}\,V$.
%
VOA automorphism groups have been studied by many authors (see for example 
\cite{DG, DN}).
In particular, the VOA automorphism group of the moonshine module $(V^\natural,\omega)$  is the Monster, i.e., the largest sporadic finite simple group \cite{Gr,Bo1,FLM}. 
That is one of the motivations for studying vertex algebras \cite{Bo1, Bo2}.
In contrast, 
the only example of a VOA whose full vertex algebra automorphism group is explicitly known is the above-mentioned Heisenberg VOA of rank one \cite{MN}.
It is natural to ask what the full vertex algebra automorphism group of the moonshine module is.
In this paper, we show that the full vertex algebra automorphism group uniquely decomposes into the product of certain two subgroups and the VOA automorphism group,
and determine the full vertex algebra
automorphism groups for the real part of unitary VOAs.
In particular, the full vertex algebra automorphism group of the moonshine module $V^\natural$ over $\mathbb{R}$ is shown to be the Monster.


Let us explain the basic idea. Let $pr_k$ denote the projection of $\bigoplus_{n \geq 0} V_n$ onto $V_k$.
To study the structure of the automorphism group, we introduce two subgroups of  $\mathrm{Aut}\,V$, denoted by $\mathrm{Aut}^+\,V$ and $\mathrm{Aut}^-\,V$.
The group $\mathrm{Aut}^- \,V$ (resp. $\mathrm{Aut}^+\,V$) consists of all vertex algebra automorphisms $f$ such that $f - \mathrm{id}$ sends $V_n$ to $\bigoplus_{k<n} V_{k}$ (resp. $\bigoplus_{k>n} V_{k}$) for all $n \in \mathbb{N}$, where $\mathrm{id}$ is the identity map.
The key observation in this paper is that if two $\Z$-gradings on a vertex algebra satisfy
certain properties, then by using the projections with respect to the $\Z$-gradings, we can construct a vertex algebra automorphism which gives an isomorphism between the $\Z$-graded vertex algebras (Lemma \ref{fundamental}).
Set  $CV^+= \{a \in \bigoplus_{n \geq 2} V_n\,|\, a \; \text{is a conformal vector and } pr_2(a)=\omega \}$. 
If $a \in CV^+$, then two gradings given by $a$ and $\omega$ satisfy the above properties, and a vertex algebra automorphism $\psi_{a}$ is constructed. Furthermore, we can prove that $\psi_a(\omega)=a$ and $\psi_a \in \mathrm{Aut}^+\,V$ (Theorem \ref{mor}).
In fact, there is a one-to-one correspondence between $CV^+$ and $\mathrm{Aut}^+\,V$. In particular, any conformal vector in $CV^+$ is conjugate to $\omega$ under 
$\mathrm{Aut}^+\,V$.

For a conformal vector $a \in V$, it is easy to show that $pr_1(a) \in J_1(V)=\{\;a \in V_1 \; | \; a(0)=0 \;  \}$.
The set $J_1(V)$ is first introduced by Dong et al. \cite{DLMM} in their study of the radical of a VOA.
Matsuo and Nagatomo have shown that if $v \in J_1(V)$, then $\exp(v(1)) \in \mathrm{Aut}\,V$ \cite{MN}. 
In this paper, we show that $\mathrm{Aut}^-\,V=\{\exp(v(1)) \,|\, v \in J_1(V)\}$ 
and $\exp(-pr_1(a)(1))a \in \bigoplus_{n \geq 2} V_n$ (Proposition \ref{Aut_m} and Lemma \ref{J1}).
Furthermore, if $a$ is of strong CFT type, then we can show that $\exp(-pr_1(a)(1))a \in CV^+$ (Lemma \ref{scft}), which proves 
that the action of $\mathrm{Aut}\,V$ on the set of conformal vectors of strong CFT type is transitive (Theorem \ref{main}). As an application, we obtain that the full vertex algebra automorphism group $\mathrm{Aut}\,V$ uniquely decomposes into the
 product of three subgroups, $\mathrm{Aut}^+\,V$,  $\mathrm{Aut}_\omega \,V$ and  $\mathrm{Aut}^-\,V$ (Theorem \ref{grp}).

We assume that a VOA $V$ over $\mathbb{R}$ has a positive-definite invariant bilinear form, which is equivalent to the condition that $V \otimes_{\mathbb{R}} \mathbb{C}$ is a unitary VOA with a natural anti-involution. Then, we show that $\mathrm{Aut}^+\,V=\{1\}$ and
$\mathrm{Aut}\,V \cong \mathrm{Aut}^{-}\,V \rtimes \mathrm{Aut}_{\omega} \,V$ (Corollary \ref{uni}).
In particular, if $J_1(V)=0$, then the full vertex algebra automorphism group coincides with the VOA automorphism group.
Importantly, the space $J_1(V)$ vanishes quite often.
For example, if $V$ is a rational $C_2$-cofinite simple VOA of strong CFT type, then $J_1(V)=0$ \cite{Ma}.
Many important VOAs (e.g., simple affine VOAs of level $k \in \mathbb{Z}_{>0}$, lattice VOAs, and the moonshine module) are unitary and satisfy $J_1(V)=0$. Hence, they provide us with many examples of VOAs over $\mathbb{R}$ whose full vertex algebra automorphism groups are equal to the VOA automorphism groups.

The remainder of this paper is organized as follows.
Section 2 reviews some definitions and introduces the notion of conformal vectors of (strong) CFT type, which are discussed throughout this paper.
Section 3 is devoted to the construction of certain automorphisms
of conformal vertex algebras that play a key role (Theorem \ref{mor}).
Sections 4 and 5 describe the homogeneous components of a conformal vector with
respect to the grading decomposition $V =\bigoplus_{n\in \mathbb{N}} V_n$ when it is of CFT type and of strong CFT type, respectively, which proves the uniqueness of self-dual vertex operator algebra structures of a simple vertex algebra (Theorem \ref{main}).
 Section 6 proves the decomposition theorem of the automorphism group (Theorem \ref{grp}).
Finally, Section 7 is devoted to determining the full vertex algebra automorphism group of a VOA over $\mathbb{R}$ with a positive-definite invariant bilinear form, especially the moonshine VOA (Theorem \ref{pos}).


\section{Preliminaries and Notation}
This section provides the necessary definitions and notations for what follows.
\subsection{Preliminaries}\mbox{}

We assume that the base field $\mathbb{K}$ is $\mathbb{R} \text{ or } \mathbb{C}$ unless otherwise stated.
For a vertex algebra $V$, we let $\bm{1}$ denote the vacuum vector of $V$ and $T$ denote the endomorphism of $V$
defined by setting $Ta=a(-2)\bm{1}$ for $a\in V$.
 A $\mathbb{Z}$-graded vertex algebra is a vertex algebra 
that is a direct sum of vector spaces $V_n$ ($n \in \Z$) such that
 $V_n(k)V_m \subset V_{n+m-k-1}$ and $\bm{1} \in V_0$.
  

A $\mathbb{Z}$-graded conformal vertex algebra $(V,\omega)$ is a $\mathbb{Z}$-graded vertex algebra $V=\bigoplus_{n \in \mathbb{Z}}V_n$ with a distinguished vector $\omega \in V_2$ satisfying the
following conditions:
the operators $L(n)=\omega(n+1)$ generate a representation of the Virasoro algebra
$$[L(n),L(m)]=(n-m)L(n+m)+(1/12)\delta_{n+m,0}(n^3-n) c,$$
where $c \in \mathbb{K}$ is a constant called the central charge;
further, $L(0)v=nv$ if $v \in V_n$; and $L(-1)=T$.
A $\mathbb{Z}$-graded conformal vertex algebra $(V,\omega)$ is said to be a {\it VOA of CFT type} if $V = \bigoplus_{n \in \mathbb{N}} V_n$, 
$\mathrm{dim}V_n$ is finite, and $V_0=\mathbb{K}\1$.
A VOA $(V,\omega)$ of CFT type is said to be of {\it strong CFT type} if $L(1)V_{1} = 0$. A simple VOA $(V,\omega)$ of CFT type is of strong CFT type precisely when $(V,\omega)$ is self-dual in the sense that the contragredient module is isomorphic to $V$ as a $V$-module.

Let $V$ be a vertex algebra.
A vector $a \in V$ is said to be a {\it conformal vector (resp. conformal vector of CFT type,
conformal vector of strong CFT type)} if 
$(V,a)$ is a $\Z$-graded conformal vertex algebra (resp. VOA of CFT type, VOA of strong CFT type).
Let $CV$ (resp. $CV_{CFT}$, $CV_{sCFT}$) denote the set of conformal vectors (resp. the set of conformal vectors of CFT type, the set of conformal vectors of strong CFT type). 
Set $V_n^a= \{ v \in V \;|\; a(1)v=nv \;\}$ for $n \in \mathbb{K}$ and $a \in V$.
A vector $a \in V$ is a conformal vector if and only if it satisfies the following conditions:

\begin{itemize}
 \item OPE
  \begin{eqnarray}
a(1)a=2a, \\
a(3)a \in \mathbb{K}\1, \\
a(n)a = 0 \text{ if } n=2 \text{ or } n \geq 4;
\end{eqnarray}
 \item derivation property
\begin{eqnarray}
a(0)= T;
\end{eqnarray}

 \item semisimplicity
  \begin{eqnarray}
V = \bigoplus_{n \in \mathbb{Z}}V_n^a.
\end{eqnarray}
\end{itemize}

A conformal vector $a$ is of CFT type if and only if it satisfies the following conditions:
\begin{eqnarray}
V = \bigoplus_{n \in \mathbb{N}}V_n^a,\\ 
\dim V_n^a < \infty \text{ for } n \in \mathbb{N}, \label{cft} \\
V_0^a = \mathbb{K}\1.
\end{eqnarray}
A vertex algebra automorphism $f$ of $V$ is a linear isomorphism of $V$ which preserves all the products:
$$f(a(n)b)=f(a)(n)f(b) \text{ for } a,b \in V \text{ and } n \in \Z$$
 with $f(\1)=\1$. Let $(V,\omega)$ be a VOA of CFT type.
 A VOA automorphism $f$ of $(V,\omega)$ is a vertex algebra automorphism of $V$ which preserves the 
conformal vector:
$$f(\omega)=\omega.$$
Let $\mathrm{Aut}\,V$ denote the group of vertex algebra automorphisms of $V$.
Then, we have the following lemma:

\begin{lem}
\label{auto_stable}
The sets $CV$, $CV_{CFT}$, and $CV_{sCFT}$ are stable under
the action of $\mathrm{Aut}\,V$.
\end{lem}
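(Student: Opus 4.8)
The statement is that if $f \in \mathrm{Aut}\,V$ and $a$ is a conformal vector (resp.\ of CFT type, of strong CFT type), then $f(a)$ is again a conformal vector (resp.\ of the same type). The plan is to verify this directly from the characterizations of conformal vectors given just above the lemma, exploiting the single fact that $f$ preserves all the products $a(n)b$ and fixes $\1$. Since $f$ is a linear isomorphism, it automatically carries a direct sum decomposition of $V$ to another direct sum decomposition, and it carries eigenspaces of one operator to eigenspaces of the conjugated operator; these two observations do essentially all the work.

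First I would treat the conformal-vector axioms (2.1)--(2.5). Applying $f$ to the OPE relations (2.1)--(2.3) and using $f(a(n)b)=f(a)(n)f(b)$ and $f(\1)=\1$ gives $f(a)(1)f(a)=2f(a)$, $f(a)(3)f(a)=f(a(3)a)\in \mathbb{K}f(\1)=\mathbb{K}\1$, and $f(a)(n)f(a)=0$ for $n=2$ or $n\ge 4$, so $f(a)$ satisfies the OPE axioms. For the derivation property (2.4), note $f\circ T = f\circ a(0) = f(a)(0)\circ f$ by the intertwining property applied with the first slot equal to $a$; on the other hand $T$ is intrinsic to the vertex algebra and any automorphism commutes with it, i.e.\ $fT=Tf$ (this follows since $Tb=b(-2)\1$ and $f(b(-2)\1)=f(b)(-2)\1=Tf(b)$), hence $f(a)(0)\circ f = T\circ f$ and therefore $f(a)(0)=T$. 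For semisimplicity (2.5), observe that $f$ maps the eigenspace $V_n^a=\{v : a(1)v=nv\}$ isomorphically onto $V_n^{f(a)}=\{v : f(a)(1)v=nv\}$: indeed $a(1)v=nv$ implies $f(a)(1)f(v)=f(a(1)v)=nf(v)$, and the reverse inclusion uses $f^{-1}\in\mathrm{Aut}\,V$. Hence $V=f(V)=f\big(\bigoplus_n V_n^a\big)=\bigoplus_n f(V_n^a)=\bigoplus_n V_n^{f(a)}$, so $f(a)$ is a conformal vector. Stability of $CV$ follows.

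For $CV_{CFT}$ I would additionally check conditions (2.6)--(2.8). From the eigenspace identification $f(V_n^a)=V_n^{f(a)}$ just established: the grading is supported on $\mathbb{N}$ for $f(a)$ iff it is for $a$ (same index set $n$), $\dim V_n^{f(a)}=\dim V_n^a<\infty$ since $f$ restricts to a linear isomorphism $V_n^a\to V_n^{f(a)}$, and $V_0^{f(a)}=f(V_0^a)=f(\mathbb{K}\1)=\mathbb{K}\1$. So $f(a)\in CV_{CFT}$. Finally, for $CV_{sCFT}$ it remains to check $L'(1)V_1^{f(a)}=0$ where $L'(n)=f(a)(n+1)$. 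We have $f(a)(2)f(V_1^a)=f\big(a(2)V_1^a\big)=f(L(1)V_1^a)=f(0)=0$, and $f(V_1^a)=V_1^{f(a)}$ by the above, so $f(a)(2)V_1^{f(a)}=0$, i.e.\ $L'(1)V_1^{f(a)}=0$. Hence $f(a)\in CV_{sCFT}$, and all three sets are $\mathrm{Aut}\,V$-stable. (One should also note that each set is stable under $f^{-1}$ for every $f\in\mathrm{Aut}\,V$, which is automatic since $f^{-1}\in\mathrm{Aut}\,V$; this confirms genuine stability rather than mere invariance of inclusion.) There is no real obstacle here: the only mild subtlety is being careful that $f$ conjugates the relevant operators correctly, i.e.\ $f(a)(n) = f\circ a(n)\circ f^{-1}$, which is exactly a rearrangement of $f(a(n)b)=f(a)(n)f(b)$; once that is in hand every verification is a one-line translation of the corresponding axiom for $a$.
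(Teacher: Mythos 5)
Your proof is correct, and it is exactly the routine verification the paper has in mind: the lemma is stated without proof there, being regarded as an immediate consequence of the characterizations (1)--(8) together with the facts that $f(a)(n)=f\circ a(n)\circ f^{-1}$ and $f\circ T=T\circ f$. Nothing is missing.
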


\subsection{Notation}\mbox{}

In the remainder of the paper, we assume that $V=\bigoplus_{n \in \Z} V_n$ is a $\Z$-graded vertex algebra,
and fix its grading. We will use the following symbols.

\begin{tabular}{ll}
$V=\bigoplus_{n \in \Z} V_n$ & The $\Z$-graded vertex algebra. \cr
$\mathrm{pr}_n$ &The canonical projection $\bigoplus_{i \in \Z} V_i \rightarrow V_n$. \cr
$V_{\geq n}$ &$ = \bigoplus_{i \geq n} V_i$. \cr
$\mathrm{Aut}^{+}\,V$ & $= \{\, f \in \mathrm{Aut}\, V\, |\, f(V_n) 
\subset \bigoplus_{k \geq n}V_k \text{ and}\; \mathrm{pr}_n \circ f |_{V_n}=id_{V_n} 
\text{ for all}\, n \in \mathbb{Z}  \, \}$. \cr

$\mathrm{Aut}^{-}\,V$ & $ = \{\; f \in \mathrm{Aut}\, V\; |\; f(V_n) \subset \bigoplus_{k \leq n} V_k \text{ and}\; \mathrm{pr}_n \circ f |_{V_n}=id_{V_n} \text{ for all}\; n \in \mathbb{Z}  \; \}$. \cr
$\mathrm{Aut}^{0}\,V$ & $= \{\; f \in \mathrm{Aut}\, V\; |\; f(V_n) \subset V_n\}$. \cr
$J_1(V)$ &$ =\{a \in V_1\,|\,a(0)=0 \}$.\cr
$CV$ & The set of conformal vectors.\cr
$CV_{CFT}$ & The set of conformal vectors of CFT type. \cr
$CV_{sCFT}$ & The set of conformal vectors of strong CFT type. \cr
\end{tabular}

\section{Construction of Automorphisms}

In this section, we construct certain automorphisms of the $\Z$-graded vertex algebra $V$.
The following lemma, which is clear from $V_i(k)V_j \subset V_{i+j-k-1}$, is critical to our study:
 
\begin{lem}
\label{bot}
Let $n, m, k$ be integers, $a \in V_{\geq n}$ and $b \in V_{\geq m}$. Then, $\mathrm{pr}_{n+m-k-1}(a(k)b)=\mathrm{pr}_n(a)(k)\mathrm{pr}_m(b)$.
\end{lem}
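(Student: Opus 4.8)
The plan is to reduce the statement to the defining grading axiom $V_i(k)V_j \subset V_{i+j-k-1}$ quoted just before the lemma, by decomposing $a$ and $b$ into their homogeneous pieces. First I would write $a = \sum_{i \geq n} a_i$ and $b = \sum_{j \geq m} b_j$ with $a_i = \mathrm{pr}_i(a) \in V_i$ and $b_j = \mathrm{pr}_j(b) \in V_j$. Since $V = \bigoplus_{n \in \Z} V_n$ is a genuine direct sum, each of these is a \emph{finite} sum, so there are no convergence issues and the bilinearity of the product $(u,v) \mapsto u(k)v$ may be applied termwise.

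Next I would expand $a(k)b = \sum_{i \geq n}\sum_{j \geq m} a_i(k)b_j$ and use the grading axiom to note that $a_i(k)b_j \in V_{i+j-k-1}$. Applying $\mathrm{pr}_{n+m-k-1}$ and using that the subspaces $V_\ell$ are independent, only those terms with $i+j-k-1 = n+m-k-1$, equivalently $i+j = n+m$, contribute to $\mathrm{pr}_{n+m-k-1}(a(k)b)$.

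The elementary key observation is then that, under the constraints $i \geq n$ and $j \geq m$, the equation $i+j = n+m$ forces $i = n$ and $j = m$. Hence $\mathrm{pr}_{n+m-k-1}(a(k)b) = a_n(k)b_m = \mathrm{pr}_n(a)(k)\mathrm{pr}_m(b)$, which is exactly the assertion.

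I do not expect any real obstacle here: the whole argument is index bookkeeping built on the direct-sum grading axiom. The only point worth a word of care is that $\mathrm{pr}_n(a)$ (or $\mathrm{pr}_m(b)$) may vanish, namely when the lowest degree actually occurring in $a$ is strictly larger than $n$; but the claimed identity still holds in that case, both sides being $0$ since then no admissible pair $(i,j)$ contributes.
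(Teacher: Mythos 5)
Your argument is correct and is precisely the index bookkeeping the paper has in mind: the paper gives no proof at all, stating only that the lemma "is clear from $V_i(k)V_j \subset V_{i+j-k-1}$," which is exactly the grading axiom you reduce to. Your decomposition into homogeneous components and the observation that $i \geq n$, $j \geq m$, $i+j=n+m$ forces $i=n$, $j=m$ fills in that claim correctly, including the degenerate case where $\mathrm{pr}_n(a)$ or $\mathrm{pr}_m(b)$ vanishes.
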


\begin{lem}
\label{subgroup}
$\mathrm{Aut}^{+}\,V$ is a subgroup of $\mathrm{Aut}\,V$.
\end{lem}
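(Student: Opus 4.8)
The plan is to verify the three subgroup axioms directly against the two defining conditions of $\mathrm{Aut}^{+}\,V$, namely that $f(V_n)\subset V_{\geq n}$ and $\mathrm{pr}_n\circ f|_{V_n}=id_{V_n}$ for all $n\in\Z$. Since $\mathrm{Aut}\,V$ is already a group, the composite and the inverse of elements of $\mathrm{Aut}^{+}\,V$ are automatically vertex algebra automorphisms, so only these two grading conditions need to be checked. The identity map visibly satisfies both, giving $\mathrm{id}\in\mathrm{Aut}^{+}\,V$.

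For closure under composition, let $f,g\in\mathrm{Aut}^{+}\,V$. I would first record that every $f\in\mathrm{Aut}^{+}\,V$ preserves each $V_{\geq n}$: indeed $f(V_{\geq n})=\sum_{k\geq n}f(V_k)\subset\sum_{k\geq n}V_{\geq k}=V_{\geq n}$. Hence $(f\circ g)(V_n)\subset f(V_{\geq n})\subset V_{\geq n}$. For the projection condition, take $v\in V_n$ and write $g(v)=v+w$ with $w\in V_{\geq n+1}$ and $f(v)=v+w'$ with $w'\in V_{\geq n+1}$; then $f(g(v))=v+w'+f(w)$, and since $f(w)\in f(V_{\geq n+1})\subset V_{\geq n+1}$, we get $\mathrm{pr}_n(f(g(v)))=v$. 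Thus $f\circ g\in\mathrm{Aut}^{+}\,V$.

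The only step requiring an idea rather than bookkeeping — and hence the main obstacle — is closure under inverses: the operator $f-\mathrm{id}$ strictly raises degree but need not be nilpotent on $V$, so one cannot simply set $f^{-1}=\sum_{j\geq 0}(\mathrm{id}-f)^{j}$ and expect the result to land in $V=\bigoplus_{n}V_n$. Instead I would argue by inspecting the lowest homogeneous component. Fix $v\in V_n$ with $v\neq 0$ and let $u=f^{-1}(v)=\sum_k u_k$ be its (finite) decomposition with $u_k\in V_k$; let $k_0$ be minimal with $u_{k_0}\neq 0$. Applying $f$, one has $f(u_{k_0})\in u_{k_0}+V_{\geq k_0+1}$ while $f(u_k)\in V_{\geq k}\subset V_{\geq k_0+1}$ for every $k>k_0$, so $\mathrm{pr}_{k_0}(f(u))=u_{k_0}\neq 0$. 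Since $f(u)=v\in V_n$, this forces $k_0=n$ and then $u_n=\mathrm{pr}_n(v)=v$. Therefore $f^{-1}(v)\in V_{\geq n}$ and $\mathrm{pr}_n(f^{-1}(v))=v$, i.e.\ $f^{-1}\in\mathrm{Aut}^{+}\,V$, which completes the verification that $\mathrm{Aut}^{+}\,V$ is a subgroup of $\mathrm{Aut}\,V$.
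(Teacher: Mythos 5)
Your proof is correct and the essential step --- showing $f^{-1}\in\mathrm{Aut}^{+}\,V$ by looking at the lowest nonzero homogeneous component $u_{k_0}$ of $f^{-1}(v)$ and using $\mathrm{pr}_{k_0}(f(u))=u_{k_0}\neq 0$ to force $k_0=n$ and $\mathrm{pr}_n(f^{-1}(v))=v$ --- is exactly the argument in the paper. You additionally write out the routine closure-under-composition check that the paper dismisses as clear, which is fine.
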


\begin{proof}
Let $f \in \mathrm{Aut}^{+}\,V$.
First, let us prove that $f^{-1} \in \mathrm{Aut}^{+}\,V$.
Let $v$ be a nonzero element of $V_n$.
 Set $a=f^{-1}(v)$, $a_k=\mathrm{pr}_k(a)$ for $k \in \Z$, and $k_0=\mathrm{min} \{\;k\;|\;a_k \neq 0 \;\}$.
Then, we have $v = f(f^{-1}(v))=f(\sum_{k \in \Z} a_k)=\sum_{k \geq k_0} f(a_k) \in V_n.$
Since $f \in  \mathrm{Aut}^{+}\,V$, we have $\mathrm{pr}_{k_0}(v)=\mathrm{pr}_{k_0}( \sum_{k \geq k_0} f(a_{k_0}) )
=\mathrm{pr}_{k_0}( f(a_{k_0}) )=a_{k_0} \neq 0$.
Thus, $k_0=n$, which proves $f^{-1}(v) \in V_{\geq n}$.
Furthermore, $v=\mathrm{pr}_n(f \circ f^{-1}(v))=\mathrm{pr}_n(f^{-1}(v))$.
Hence, $f^{-1} \in \mathrm{Aut}^{+}\,V$.
It is clear that $\mathrm{Aut}^{+}\,V$ is closed under products.
Hence, $\mathrm{Aut}^{+}\,V$ is a subgroup.
\end{proof}

Suppose that the $\Z$-graded vertex algebra $V=\bigoplus_{n \in \Z} V_n$ admits another $\Z$-grading,
$V = \bigoplus_{n \in \Z} V'_n$.
The following simple observation is fundamental:

\begin{lem}
\label{fundamental}
If $V'_n \subset V_{\geq n}$ for all $n \in \Z$, then there exists a vertex algebra homomorphism $\psi: V \rightarrow V$
such that $\psi(V'_n) \subset V_n$ for all $n \in \Z$.
Furthermore, if $V'_n \subset V_{\geq n}$ and $V_n \subset V'_{\geq n}$ for all $n \in \Z$, then there exists $\psi \in \mathrm{Aut}^+\,V$ such that 
$\psi(V_n) = V'_n$ for all $n \in \Z$.
\end{lem}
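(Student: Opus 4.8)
The plan is to construct $\psi$ directly from the canonical projections. For the first assertion, given $v \in V'_n$ set $\psi(v) = \mathrm{pr}_n(v)$ and extend linearly over $V = \bigoplus_{n} V'_n$; then $\psi(V'_n) \subset V_n$ is immediate, and the only real point is that $\psi$ preserves all products. For $a \in V'_n$, $b \in V'_m$ and $k \in \Z$, the second grading being a vertex algebra grading gives $a(k)b \in V'_{n+m-k-1}$, hence $\psi(a(k)b) = \mathrm{pr}_{n+m-k-1}(a(k)b)$; since $a \in V'_n \subset V_{\geq n}$ and $b \in V'_m \subset V_{\geq m}$, Lemma \ref{bot} rewrites the right-hand side as $\mathrm{pr}_n(a)(k)\,\mathrm{pr}_m(b) = \psi(a)(k)\psi(b)$, and bilinearity of each side extends the identity to all of $V$. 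Because $\1$ lies in both $V'_0$ and $V_0$, we also have $\psi(\1) = \mathrm{pr}_0(\1) = \1$, so $\psi$ is a vertex algebra homomorphism.

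For the second assertion I would invoke the first assertion twice. Writing $\mathrm{pr}'_n$ for the projection onto $V'_n$, the added hypothesis $V_n \subset V'_{\geq n}$ permits the same construction with the two gradings interchanged, producing a vertex algebra homomorphism $\psi \colon V \to V$ with $\psi|_{V_n} = \mathrm{pr}'_n|_{V_n}$ and $\psi(V_n) \subset V'_n$; the first assertion in the original direction gives $\phi \colon V \to V$ with $\phi|_{V'_n} = \mathrm{pr}_n|_{V'_n}$ and $\phi(V'_n) \subset V_n$. The crux is to show $\psi$ and $\phi$ are inverse to each other. Given $v \in V_n$, write $v = \sum_{m \geq n} \mathrm{pr}'_m(v)$, the sum starting at $n$ because $V_n \subset V'_{\geq n}$; each term lies in $V'_m \subset V_{\geq m}$, so applying $\mathrm{pr}_n$ kills every contribution with $m > n$, leaving $v = \mathrm{pr}_n(v) = \mathrm{pr}_n(\mathrm{pr}'_n(v)) = \phi(\psi(v))$. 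The symmetric computation, now using $V'_n \subset V_{\geq n}$, gives $\psi(\phi(w)) = w$ for $w \in V'_n$. Hence $\psi$ is a vertex algebra automorphism with $\psi^{-1} = \phi$.

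Finally, $\psi(V_n) \subset V'_n$ together with $\phi(V'_n) = \psi^{-1}(V'_n) \subset V_n$ forces $\psi(V_n) = V'_n$; in particular $\psi(V_n) = V'_n \subset V_{\geq n}$, and the identity $\mathrm{pr}_n(\mathrm{pr}'_n(v)) = v$ for $v \in V_n$ obtained above is precisely $\mathrm{pr}_n \circ \psi|_{V_n} = \mathrm{id}_{V_n}$, so $\psi \in \mathrm{Aut}^+\,V$ as required. I do not anticipate a genuine obstacle: the argument is formal once Lemma \ref{bot} is available. The only delicate bookkeeping is keeping the two families of projections apart and observing that the two inclusions $V'_n \subset V_{\geq n}$ and $V_n \subset V'_{\geq n}$ together force $\mathrm{pr}_n \circ \mathrm{pr}'_n = \mathrm{id}_{V_n}$ and $\mathrm{pr}'_n \circ \mathrm{pr}_n = \mathrm{id}_{V'_n}$ on the respective pieces, which is exactly what makes $\psi$ and $\phi$ mutually inverse and places $\psi$ in $\mathrm{Aut}^+\,V$.
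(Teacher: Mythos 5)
Your proof is correct and follows essentially the same route as the paper: both parts rest on defining $\psi$ piecewise as the canonical projection and invoking Lemma \ref{bot} together with $\1 \in V_0 \cap V'_0$ to get a vertex algebra homomorphism. The only organizational difference is in the second assertion, where the paper proves injectivity and surjectivity of the single map $\mathrm{pr}_n|_{V'_n}$ and then appeals to Lemma \ref{subgroup} to place it in $\mathrm{Aut}^+\,V$, whereas you build the inverse explicitly as the projection in the opposite direction and verify the two $\mathrm{Aut}^+$ conditions directly; your variant also produces the automorphism oriented as in the statement, $\psi(V_n)=V'_n$, rather than its inverse.
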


\begin{proof}
Let $i_n: V'_n \rightarrow V_{\geq n}$ denote the inclusion.
Define $\psi: V \rightarrow V$ by $\psi|_{V'_n}=\mathrm{pr}_n \circ i_n$.
Since $\bm{1} \in V_0 \cap V'_0$, we have $\psi(\1)=\1$.
Hence, Lemma \ref{bot} implies that $\psi$ is a vertex algebra homomorphism. 
It is clear that $\psi(V'_n) \subset V_n$.

Now, suppose that $V_n \subset V'_{\geq n}$ for all $n \in \Z$.
Then, $\psi$ is injective, since $V'_n \cap V_{\geq n+1} \subset 
V'_n \cap V'_{\geq n+1}=0$.
Let $v \in V_n$. Since $V_n \subset V'_{\geq n}$,
we can write $v = \sum_{i \geq n} v'_i$ with $v'_i \in V'_i$.
Then, $v-v'_n \in V'_{\geq n+1} \subset V_{\geq n+1}$.
Hence, $\psi(v'_n)=\mathrm{pr}_n(v'_n)=\mathrm{pr}_n(v)+\mathrm{pr}_n(v'_n-v)=v$.
Thus, $\psi$ is a vertex algebra automorphism.

We will show that $\psi \in \mathrm{Aut}^+\,V$.
Since $\mathrm{Aut}^+\,V$ is a group, it suffices to show that $\psi^{-1} \in \mathrm{Aut
}^+\,V$.
Clearly, $\psi^{-1}(V_n) \subset V_{\geq n}$.
Let $v \in V_n$, and set $\psi^{-1}(v)=w$. According to the definition of $\psi$, $\mathrm{pr}_n(w)=v$.
Hence, $\mathrm{pr}_n \circ \psi^{-1}|_{V_n}=\mathrm{id}$, which proves that $\psi \in \mathrm{Aut}^+\,V$.
\end{proof}

\begin{lem}
\label{fil_lem}
If $V'_n \subset V_{\geq n}$ and $V'_n \cap V_{\geq n+1}=0$ for all $n \in \Z$,
then $V_n \subset V'_{\geq n}$ for all $n \in \Z$.
\end{lem}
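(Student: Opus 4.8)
The plan is to argue by contradiction, using a minimality argument on the grading index together with the hypothesis $V'_n \cap V_{\geq n+1} = 0$. Suppose the conclusion fails, so there is some $v \in V_n$ with $v \notin V'_{\geq n}$. Decompose $v = \sum_{i \in \Z} v'_i$ with $v'_i \in V'_i$ (this is a finite sum), and let $k_0 = \min\{\, i \mid v'_i \neq 0 \,\}$; the assumption $v \notin V'_{\geq n}$ forces $k_0 < n$. I would like to derive a contradiction by showing $v'_{k_0} = 0$.

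The key step is to understand $\mathrm{pr}_{k_0}(v)$ in two ways. On one hand, $v \in V_n$ with $n > k_0$, so $\mathrm{pr}_{k_0}(v) = 0$. On the other hand, each $v'_i$ with $i \geq k_0$ lies in $V'_i \subset V_{\geq i} \subset V_{\geq k_0}$, so I can compare with the $V$-grading component in degree $k_0$: I claim $\mathrm{pr}_{k_0}(v'_i) = 0$ for $i > k_0$, while $\mathrm{pr}_{k_0}(v'_{k_0})$ captures $v'_{k_0}$ faithfully. The first half of this claim is exactly the statement that $V'_i \subset V_{\geq k_0 + 1}$ when $i > k_0$, i.e.\ $V'_i \subset V_{\geq i} \subset V_{\geq k_0+1}$, which is immediate. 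The second half — that $\mathrm{pr}_{k_0}|_{V'_{k_0}}$ is injective — is precisely the hypothesis $V'_{k_0} \cap V_{\geq k_0 + 1} = 0$, since the kernel of $\mathrm{pr}_{k_0}$ on $V'_{k_0} \subset V_{\geq k_0}$ is $V'_{k_0} \cap V_{\geq k_0+1}$. Putting these together, $0 = \mathrm{pr}_{k_0}(v) = \sum_{i \geq k_0} \mathrm{pr}_{k_0}(v'_i) = \mathrm{pr}_{k_0}(v'_{k_0})$, and injectivity of $\mathrm{pr}_{k_0}$ on $V'_{k_0}$ gives $v'_{k_0} = 0$, contradicting the definition of $k_0$.

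I do not anticipate a genuine obstacle here; the lemma is essentially a bookkeeping statement about two compatible filtrations. The only point requiring a little care is making sure all sums are finite so that $k_0$ is well defined — this follows because $v$ lives in the direct sum $V = \bigoplus_n V'_n$, hence has only finitely many nonzero homogeneous components. I would phrase the final write-up exactly along the contradiction lines above, spending at most a sentence on the finiteness remark.
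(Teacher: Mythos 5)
Your argument is correct and is essentially the paper's own proof: both decompose $v \in V_n$ into its $V'$-homogeneous components, isolate the minimal index $k_0$, observe that $\mathrm{pr}_{k_0}(v) = \mathrm{pr}_{k_0}(v'_{k_0})$ because the higher components land in $V_{\geq k_0+1}$, and invoke $V'_{k_0}\cap V_{\geq k_0+1}=0$ to pin down $k_0 = n$. The only difference is cosmetic — you run it as a contradiction while the paper argues directly that $k_0=n$.
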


%
%
%
%
%

\begin{proof}
Let $v$ be a nonzero element of $V_n$. Then, $v=\sum_{k \in \Z}v'_k$, $v'_k \in V'_k$.
Set $k_0=\mathrm{min} \{\;k\;|\;v'_k \neq 0 \;\}$.
Since $V'_n \subset V_{\geq n}$, we have $\mathrm{pr}_{k_0}(v)=\mathrm{pr}_{k_0}(\sum_{k \geq k_0}v'_k)=\mathrm{pr}_{k_0}(v'_{k_0})$.
Since $V'_{k_0} \cap V_{\geq k_0+1}=0$ and $v'_{k_0} \neq 0$, we have $\mathrm{pr}_{k_0}(v'_{k_0}) \neq 0$, which implies that $k_0=n$, and $V_n \subset V'_{\geq n}$.
\end{proof}

In the remainder of the paper, we assume that  $(V,\omega)$ is a $\mathbb{Z}$-graded conformal vertex algebra, and fix its conformal vector $\omega$.
Let $\psi \in \mathrm{Aut}^{+}\,V$. Then, $\psi(\omega) \in CV \cap V_{\geq 2} $ and $\mathrm{pr}_2(\psi(\omega))=\omega$.
Set $CV^+ \coloneqq \{a \in CV \cap V_{\geq 2} \,|\,  \mathrm{pr}_2(a)=\omega  \}$.
Then, we have a map $\phi: \mathrm{Aut}^{+}\,V \rightarrow CV^+$ given by $f \mapsto f(\omega)$.
The following lemma asserts that $\phi$ is injective.
\begin{lem}
\label{auto_unique}
If $f, g\in \mathrm{Aut}^{+}\,V$ satisfy $f(\omega)=g(\omega)$, then $f=g$.
\end{lem}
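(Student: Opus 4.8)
The plan is to exploit the fact that an element of $\mathrm{Aut}^+\,V$ that fixes $\omega$ fixes the entire Virasoro grading, and then to show that a vertex algebra automorphism which is the identity on each graded piece $V_n^\omega$ is necessarily the identity. So suppose $f,g \in \mathrm{Aut}^+\,V$ with $f(\omega)=g(\omega)$. Set $h = g^{-1}\circ f$; by Lemma \ref{subgroup}, $h \in \mathrm{Aut}^+\,V$, and since $g(\omega)=f(\omega)$ we get $h(\omega)=\omega$. It therefore suffices to prove that the only element of $\mathrm{Aut}^+\,V$ fixing $\omega$ is the identity.

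First I would record that because $h(\omega)=\omega$, the automorphism $h$ commutes with every $L(n)=\omega(n+1)$, hence $h(V_n) = V_n$ for all $n$ (here $V_n = V_n^\omega$ is the fixed grading of the $\mathbb{Z}$-graded conformal vertex algebra). Thus $h \in \mathrm{Aut}^+\,V \cap \mathrm{Aut}^0\,V$. Now for $v \in V_n$ the defining condition of $\mathrm{Aut}^+\,V$ gives $\mathrm{pr}_n\circ h|_{V_n} = \mathrm{id}_{V_n}$, but since $h(v)\in V_n$ as well, $h(v) = \mathrm{pr}_n(h(v)) = v$. Hence $h = \mathrm{id}$, and therefore $f = g$.

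The only point that needs a little care is the claim that $h(\omega)=\omega$ forces $h$ to preserve the grading $V=\bigoplus V_n$. This is standard: applying $h$ to the identity $\omega(1)v = nv$ for $v\in V_n$ and using $h(\omega)=\omega$ and the automorphism property $h(\omega(1)v)=h(\omega)(1)h(v)=\omega(1)h(v)$ yields $\omega(1)h(v) = n\,h(v)$, so $h(v) \in V_n$; since $h$ is bijective and the $V_n$ exhaust $V$, we get $h(V_n)=V_n$. I expect no genuine obstacle here — the substance of the lemma is entirely packaged in the two conditions defining $\mathrm{Aut}^+\,V$ (grading-raising plus leading term the identity), which together with grading-preservation collapse to the identity map.
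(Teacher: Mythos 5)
Your proposal is correct and follows essentially the same route as the paper: pass to the composite automorphism (the paper uses $f^{-1}\circ g$, you use $g^{-1}\circ f$), note that it lies in $\mathrm{Aut}^{+}\,V$ by Lemma \ref{subgroup} and fixes $\omega$, hence preserves each $V_n$, and then the condition $\mathrm{pr}_n\circ h|_{V_n}=\mathrm{id}_{V_n}$ forces it to be the identity. The only difference is that you spell out the (standard) step that fixing $\omega$ implies grading-preservation, which the paper states without comment.
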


\begin{proof}
Since $f^{-1}(g(\omega))=\omega$, $f^{-1}\circ g(V_n)=V_n$ for all $n \in \Z$.
According to Lemma \ref{subgroup}, $f^{-1} \circ g \in \mathrm{Aut}^{+}\,V$.
In other words, $f^{-1} \circ g|_{V_n}=id_{V_n}$ for all $n \in \Z$, which proves the lemma.
\end{proof}

The following theorem asserts that $\phi$ is surjective.

\begin{thm}
\label{mor}
Assume that $a \in V_{\geq 2}$ satisfies the following conditions:
$\mathrm{pr}_2(a)=\omega$, $a(0)=T$, $a(1)a=2a$;
$a(1)$ is semisimple on $V$; and the eigenvalues are integers. Then,
there exists a unique $\psi_a \in \mathrm{Aut}^{+}\,V$ such that $\psi_a(\omega)=a$.
In particular, $a$ is a conformal vector.
\end{thm}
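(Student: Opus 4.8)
The plan is to use $a$ to build a second $\Z$-grading on $V$ and then invoke Lemma \ref{fundamental}. Since $a(1)$ is semisimple with integer eigenvalues, set $V'_n = V_n^a = \{v \in V \mid a(1)v = nv\}$, so that $V = \bigoplus_{n \in \Z} V'_n$. The first task is to check this is a $\Z$-grading of the vertex algebra, i.e. $V'_i(k)V'_j \subset V'_{i+j-k-1}$ and $\1 \in V'_0$. The vacuum lies in $V'_0$ because $a(1)\1 = 0$ (as $a \in V_{\geq 2}$ and $a(1)\1 = T a$ evaluated... more directly, $a(1)\1 = 0$ by the vertex algebra axioms since $a(n)\1 = 0$ for $n \geq 0$). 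For the product rule, I would use the commutator formula together with $a(0) = T$ and $a(1)a = 2a$: a standard computation shows $a(1)\bigl(u(k)v\bigr) = \bigl(a(1)u\bigr)(k)v + u(k)\bigl(a(1)v\bigr) - k\,u(k)v$, which is exactly the statement that $a(1)$ acts as a derivation of degree $-1$ on the products, giving the grading property. This is the part that genuinely uses the hypotheses $a(0)=T$ and $a(1)a = 2a$; the rest is formal.

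Next I would establish the two containments needed to apply the second half of Lemma \ref{fundamental}, namely $V'_n \subset V_{\geq n}$ and $V_n \subset V'_{\geq n}$ for all $n$. The hypothesis $\mathrm{pr}_2(a) = \omega$ is what drives this. Write $a = \omega + \sum_{j \geq 3} a_j$ with $a_j \in V_j$. For $v \in V_m$ (the original grading), I want to understand $a(1)v$. By Lemma \ref{bot}, the lowest-degree component of $a(1)v = \sum_j a_j(1) v$ is $\omega(1)v = L(0)v = mv \in V_m$, and all other contributions $a_j(1)v$ for $j \geq 3$ lie in $V_{\geq m+1}$. So $a(1)$ acts on $V_m$ as $m \cdot \mathrm{id}$ plus a strictly degree-raising operator. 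From this, a triangularity argument shows that the $a(1)$-eigenspace $V'_n$ is contained in $V_{\geq n}$: if $v \in V'_n$ has lowest component in $V_m$ with $m < n$, then $\mathrm{pr}_m(a(1)v) = mv_m \neq m$... rather $= m v_m$ while $\mathrm{pr}_m(n v) = n v_m$, forcing $m = n$, contradiction; hence $V'_n \subset V_{\geq n}$. Moreover $V'_n \cap V_{\geq n+1} = 0$ by the same eigenvalue bookkeeping, so Lemma \ref{fil_lem} immediately gives $V_n \subset V'_{\geq n}$.

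With both inclusions in hand, Lemma \ref{fundamental} produces $\psi \in \mathrm{Aut}^+ V$ with $\psi(V'_n) = V_n$ for all $n$. It remains to identify $\psi(\omega)$ with, or rather to produce the desired $\psi_a$ sending $\omega$ to $a$. Here I would instead apply Lemma \ref{fundamental} in the direction that yields $\psi_a(V_n) = V'_n$, then check $\psi_a(\omega) = a$: since $\omega \in V_2$ and $\psi_a(V_2) = V'_2$, and since $a \in V'_2$ with $\mathrm{pr}_2(a) = \omega = \mathrm{pr}_2(\psi_a(\omega))$ (using $\psi_a \in \mathrm{Aut}^+ V$), the injectivity statement built into the construction of $\psi$ (the component in $V'_2 \cap V_{\geq 3} = 0$ vanishes) forces $\psi_a(\omega) = a$. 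Uniqueness of $\psi_a$ is then exactly Lemma \ref{auto_unique}. Finally, since $a = \psi_a(\omega)$ is the image of a conformal vector under a vertex algebra automorphism, Lemma \ref{auto_stable} shows $a$ is a conformal vector, proving the last sentence. I expect the main obstacle to be the careful triangularity bookkeeping in the middle paragraph — verifying $V'_n \subset V_{\geq n}$ cleanly from $\mathrm{pr}_2(a) = \omega$ — since the eigenspace decomposition and the fixed $\Z$-grading must be compared degree by degree, and one must be careful that $a$ may have infinitely many nonzero homogeneous components $a_j$.
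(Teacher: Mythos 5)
Your proposal is correct and follows essentially the same route as the paper: the eigenspaces of $a(1)$ give the second $\Z$-grading, the inclusions $V_n^a \subset V_{\geq n}$ and $V_n^a \cap V_{\geq n+1}=0$ are verified (your degree-by-degree triangularity argument is exactly what the paper packages as Lemma \ref{linear}), and Lemmas \ref{fil_lem}, \ref{fundamental} and \ref{auto_unique} finish the construction, identification $\psi_a(\omega)=a$, and uniqueness just as you describe. One minor slip worth fixing: since $a(0)=T$ and $(Tu)(m)=-m\,u(m-1)$, the commutator formula gives $a(1)\bigl(u(k)v\bigr)=\bigl(a(1)u\bigr)(k)v+u(k)\bigl(a(1)v\bigr)-(k+1)\,u(k)v$, i.e.\ the correction term is $-(k+1)u(k)v$ rather than $-k\,u(k)v$ (matching the convention $V_i(k)V_j\subset V_{i+j-k-1}$); also $a(1)a=2a$ is not needed for the grading property itself, only later to place $a$ in $V_2^a$.
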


\begin{cor}
\label{Aut_p}
The map $\phi: \mathrm{Aut}^{+}\,V \rightarrow CV^{+}$ , $f \mapsto f(\omega)$ is a bijection.
The inverse is $\psi$, $a \mapsto \psi_a$ , defined in Theorem \ref{mor}.
\end{cor}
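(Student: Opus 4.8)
The statements reduce to Theorem~\ref{mor}. Indeed, $\phi$ is well defined by the discussion preceding Lemma~\ref{auto_unique}, it is injective by Lemma~\ref{auto_unique}, and every $a\in CV^{+}$ satisfies the hypotheses of Theorem~\ref{mor} (a conformal vector has $a(0)=T$, $a(1)a=2a$, and the semisimplicity axiom $V=\bigoplus_{n\in\mathbb Z}V_n^a$ forces $a(1)$ to be semisimple with integer eigenvalues); hence $\phi(\psi_a)=\psi_a(\omega)=a$ gives surjectivity, and Lemma~\ref{auto_unique} shows $a\mapsto\psi_a$ is the two-sided inverse. So the plan is to prove Theorem~\ref{mor}: attach to $a$ a second $\mathbb Z$-grading of $V$ and apply Lemma~\ref{fundamental}, uniqueness of $\psi_a$ being immediate from Lemma~\ref{auto_unique} and the statement ``$a$ is a conformal vector'' coming for free from Lemma~\ref{auto_stable} once $\psi_a$ is built.

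Set $V'_n:=V^a_n=\{v\in V\mid a(1)v=nv\}$. By hypothesis $a(1)$ is semisimple with integer eigenvalues, so $V=\bigoplus_{n\in\mathbb Z}V'_n$ as a vector space. To see this is a grading of the vertex algebra, I would use the commutator formula together with $a(0)=T$: since $\binom{1}{j}=0$ for $j\ge 2$, for any $b\in V$ and $k\in\mathbb Z$ one gets $[a(1),b(k)]=(a(0)b)(k+1)+(a(1)b)(k)=(Tb)(k+1)+(a(1)b)(k)=-(k+1)b(k)+(a(1)b)(k)$. Hence if $b\in V'_m$, then $b(k)$ shifts the $a(1)$-eigenvalue by $m-k-1$, so $V'_m(k)V'_n\subset V'_{m+n-k-1}$; and $\1\in V'_0$ because $a(1)\1=0$.

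Next I would compare this grading with the given one. Write $a=\sum_{j\ge 2}a_j$ with $a_j=\mathrm{pr}_j(a)$ and $a_2=\omega$; by Lemma~\ref{bot}, for $v\in V_k$ the component $a_j(1)v$ lies in $V_{j+k-2}$, so $a(1)v-kv\in V_{\ge k+1}$ (the leading term being $\omega(1)v=L(0)v=kv$, which is exactly where $\mathrm{pr}_2(a)=\omega$ enters), and more generally $(a(1)-k)V_{\ge k}\subset V_{\ge k+1}$. Therefore, if $0\ne v\in V'_n$ has lowest nonzero homogeneous component in degree $k_0$, applying $\mathrm{pr}_{k_0}$ to the identity $(a(1)-k_0)v=(n-k_0)v\in V_{\ge k_0+1}$ forces $n=k_0$. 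This proves simultaneously that $V'_n\subset V_{\ge n}$ and $V'_n\cap V_{\ge n+1}=0$ for all $n$. By Lemma~\ref{fil_lem} it follows that $V_n\subset V'_{\ge n}$ for all $n$, so the hypotheses of the second part of Lemma~\ref{fundamental} hold, yielding $\psi_a\in\mathrm{Aut}^{+}\,V$ with $\psi_a(V_n)=V'_n$ for all $n$. Finally $\psi_a(\omega)\in V'_2=V^a_2$ with $\mathrm{pr}_2(\psi_a(\omega))=\omega$ (since $\psi_a\in\mathrm{Aut}^{+}\,V$), while $a\in V^a_2$ (by $a(1)a=2a$) with $\mathrm{pr}_2(a)=\omega$; as $V^a_2\cap V_{\ge 3}=0$, this gives $\psi_a(\omega)=a$. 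Since $\psi_a\in\mathrm{Aut}\,V$ and $\omega$ is a conformal vector, Lemma~\ref{auto_stable} shows $a=\psi_a(\omega)\in CV$.

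The only genuinely nontrivial point is the grading comparison of the third paragraph — establishing $V^a_n\subset V_{\ge n}$ together with the transversality $V^a_n\cap V_{\ge n+1}=0$ — which uses all three hypotheses $a\in V_{\ge 2}$, $\mathrm{pr}_2(a)=\omega$, and the semisimple integral eigenspace decomposition of $a(1)$; everything afterward is formal, via Lemmas~\ref{fil_lem}, \ref{fundamental}, \ref{auto_unique} and \ref{auto_stable}, and the passage to Corollary~\ref{Aut_p} is then only bookkeeping.
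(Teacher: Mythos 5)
Your proposal is correct and follows essentially the same route as the paper: injectivity from Lemma~\ref{auto_unique}, surjectivity by building $\psi_a$ from the second grading $V=\bigoplus_n V_n^a$ via Lemmas~\ref{fil_lem} and~\ref{fundamental}, the key comparison $V_n^a\subset V_{\geq n}$, $V_n^a\cap V_{\geq n+1}=0$ being exactly what the paper extracts from Lemma~\ref{linear} (your inlined leading-term computation $\mathrm{pr}_n\circ a(1)=\omega(1)\circ\mathrm{pr}_n$ on $V_{\geq n}$ is the same observation). The only cosmetic difference is that you spell out, via the commutator formula and $a(0)=T$, why $\bigoplus_n V_n^a$ is a vertex-algebra grading, a step the paper leaves implicit.
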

Before we prove the above theorem, we need a preliminary result.
Let $a \in V_{\geq 2}$. Recall that
 $V_m^a=\{ v \in V \;|\; a(1)v=mv \;\}$ and 
$V_m^{\mathrm{pr}_2(a)}=\{ v \in V \;|\; \mathrm{pr}_2(a)(1)v=mv \;\}$ for $m \in \mathbb{K}$.
Suppose that $a(1)$ is semisimple on $V$. Since $a(1)$ maps $V_{\geq n}$ onto $V_{\geq n}$,
$V_{\geq n}$ decomposes into the eigenspaces of $a(1)$. That is, $V_{\geq n}= \bigoplus_{m \in \mathbb{K}} V_{\geq n}\cap V_m^a$.
Let $v \in V_{\geq n}$. 
According to Lemma \ref{bot}, $\mathrm{pr}_n(a(1)v)=\mathrm{pr}_2(a)(1)\mathrm{pr}_n(v)$. That is, $\mathrm{pr}_n \circ a(1)$ is equal to $\mathrm{pr}_2(a)(1)\circ \mathrm{pr}_n$ on $V_{\geq n}$.
Therefore, $\mathrm{pr}_2(a)(1)$ is semisimple on $V$, and we have a map $\mathrm{pr}_n |_{V_{\geq n} \cap V_m^a} : V_{\geq n} \cap V_m^a \rightarrow V_n \cap V_m^{\mathrm{pr}_2(a)}$. Since $\mathrm{pr}_n: V_{\geq n} \rightarrow V_n$ is surjective, $\mathrm{pr}_n |_{V_{\geq n} \cap V_m^a} : V_{\geq n} \cap V_m^a \rightarrow V_n \cap V_m^{\mathrm{pr}_2(a)} $ is also surjective.
The kernel of $\mathrm{pr}_n |_{V_{\geq n} \cap V_m^a} $ is $V_{\geq n+1}\cap V_m^a$.
Thus, we have the following:

\begin{lem}
\label{linear}
Let $a \in V_{\geq 2}$ and assume that $a(1)$ is semisimple on $V$.
Then, $\mathrm{pr}_2(a)(1)$ is semisimple.
Moreover, the projections $\mathrm{pr}_n$ induce isomorphisms $(V_{\geq n} \cap V_m^a)/(V_{\geq n+1} \cap V_m^a) \rightarrow V_n \cap V_m^{\mathrm{pr}_2(a)}$
 for all $n \in \Z$ and $m \in \mathbb{K}$. 
\end{lem}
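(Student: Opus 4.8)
The plan is to play the two $\Z$-gradings against each other. Since $a\in V_{\geq 2}$ and $V_i(1)V_j\subset V_{i+j-2}$, the operator $a(1)$ maps each $V_{\geq n}$ into itself, whereas $\mathrm{pr}_2(a)\in V_2$ makes $\mathrm{pr}_2(a)(1)$ preserve each homogeneous piece $V_n$. The link between the two is Lemma \ref{bot} applied with $k=1$: for $v\in V_{\geq n}$ one gets $\mathrm{pr}_n(a(1)v)=\mathrm{pr}_2(a)(1)\mathrm{pr}_n(v)$. Thus the surjection $\mathrm{pr}_n\colon V_{\geq n}\to V_n$, whose kernel is $V_{\geq n+1}$, intertwines $a(1)|_{V_{\geq n}}$ with $\mathrm{pr}_2(a)(1)|_{V_n}$; everything in the lemma will be extracted from this single commuting square.

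For the semisimplicity of $\mathrm{pr}_2(a)(1)$, I would first note that $a(1)$ is semisimple on the invariant subspace $V_{\geq n}$: given $v\in V_{\geq n}$ with (finite) eigen-decomposition $v=\sum_{m\in S}v_m$ in $V$, each $v_m$ is obtained from $v$ by applying the Lagrange interpolation polynomial in $a(1)$ that isolates the $m$-term — legitimate because the $m\in S$ are distinct — and this polynomial preserves $V_{\geq n}$, so $v_m\in V_{\geq n}$. Hence $V_{\geq n}=\bigoplus_m(V_{\geq n}\cap V_m^a)$. Applying $\mathrm{pr}_n$ and using that eigenspaces of distinct eigenvalues always sum directly, we obtain $V_n=\bigoplus_m \mathrm{pr}_n(V_{\geq n}\cap V_m^a)$ with $\mathrm{pr}_n(V_{\geq n}\cap V_m^a)\subset V_n\cap V_m^{\mathrm{pr}_2(a)}$; comparing the two direct sums forces equality, so $\mathrm{pr}_2(a)(1)$ is semisimple on each $V_n$, hence on $V=\bigoplus_n V_n$ since it preserves the grading.

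The displayed isomorphism is then read off the same data: $\mathrm{pr}_n$ sends $V_{\geq n}\cap V_m^a$ onto $V_n\cap V_m^{\mathrm{pr}_2(a)}$ — surjectivity because any preimage in $V_{\geq n}$ of a given $w\in V_n\cap V_m^{\mathrm{pr}_2(a)}$, decomposed into $a(1)$-eigencomponents, contributes (after $\mathrm{pr}_n$) only its $m$-component to $w$ by uniqueness of the eigendecomposition — and the kernel of $\mathrm{pr}_n|_{V_{\geq n}\cap V_m^a}$ is $V_{\geq n}\cap V_m^a\cap V_{\geq n+1}=V_{\geq n+1}\cap V_m^a$, which gives the isomorphism on the quotient. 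I do not expect a genuine obstacle here: the argument is formal, and the only point needing care is that ``semisimple'' means a direct-sum-of-eigenspaces decomposition of a possibly infinite-dimensional space, so ``restriction to an invariant subspace'' and ``push-forward along a surjection'' must be justified by the interpolation-polynomial trick rather than by a dimension count. It is also worth noting that the extra hypotheses of Theorem \ref{mor} (integrality of the eigenvalues, $a(0)=T$, $a(1)a=2a$) are not used in this lemma and enter only afterwards.
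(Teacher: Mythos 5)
Your proof is correct and follows essentially the same route as the paper: the intertwining relation $\mathrm{pr}_n\circ a(1)=\mathrm{pr}_2(a)(1)\circ\mathrm{pr}_n$ on $V_{\geq n}$ from Lemma \ref{bot}, the eigenspace decomposition $V_{\geq n}=\bigoplus_m(V_{\geq n}\cap V_m^a)$, and the surjectivity of $\mathrm{pr}_n$ with kernel $V_{\geq n+1}\cap V_m^a$. The only difference is that you spell out, via the interpolation-polynomial argument, the standard fact that a semisimple operator restricts semisimply to an invariant subspace, which the paper simply asserts.
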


\begin{proof}[Proof of Theorem \ref{mor}]
According to Lemma \ref{auto_unique}, $\psi_a$ is unique if it exists.
First, we show that $V_n$ and $V_n^a$ satisfy the assumptions in Lemma \ref{fundamental}.
Since $a(0)=T$, $V=\bigoplus_{n \in \Z} V_n^a$ is a $\Z$-graded vertex algebra.
By  applying Lemma \ref{linear} to $a(1)$ and using $\mathrm{pr}_2(a)=\omega$, we find that the maps $(V_{\geq n} \cap V_m^a) /(V_{\geq n+1} \cap V_m^a) \rightarrow V_n \cap V_m$ are linear isomorphisms for all $n,m \in \Z$.
If $n \neq m$, then $V_n \cap V_m=0$, which proves
 $V_n^a \subset V_{\geq n}$ and $V_n^a \cap V_{\geq n+1}=0$.
According to Lemma \ref{fil_lem} and Lemma \ref{fundamental}, there exists $\psi_a \in \mathrm{Aut}^{+}\,V$
such that $\psi_a(V_n)=V_n^a$. By $a(1)a = 2a$, we have $a \in V_2^a$.
Since $pr_2(a)=\omega$ and $a \in V_2^a$, we have $\psi_a(\omega)=a$,
which proves the theorem.
\end{proof}

\begin{lem} \label{pr2}
If $a \in CV \cap V_{\geq 2}$, then $\mathrm{pr}_2(a) \in CV \cap V_2$. Moreover, $V_n^a \cong V_n^{\mathrm{pr}_2(a)}$ as vector spaces
for all $n \in \Z$.
\end{lem}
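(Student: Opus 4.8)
The plan is to verify that $b := \mathrm{pr}_2(a) \in V_2$ satisfies each of the defining conditions of a conformal vector recalled in Section~2 — the OPE relations, the derivation property $b(0)=T$, and semisimplicity of $b(1)$ with integer eigenvalues — and then to extract the vector space isomorphism $V_n^a \cong V_n^{b}$ from Lemma~\ref{linear}. The OPE relations and the derivation property are pure bookkeeping with Lemma~\ref{bot}. Applying Lemma~\ref{bot} to the pair $a,a \in V_{\geq 2}$ gives $b(k)b = \mathrm{pr}_{3-k}(a(k)a)$ for every $k$; feeding in $a(1)a=2a$, $a(2)a=0$, $a(3)a\in\mathbb{K}\1$ (which already lies in $V_0$), and $a(k)a=0$ for $k\geq 4$ produces $b(1)b=2b$, $b(2)b=0$, $b(3)b=a(3)a\in\mathbb{K}\1$, and $b(k)b=0$ for $k\geq 4$. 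For the derivation property I take $v\in V_m$ homogeneous and apply Lemma~\ref{bot} with index $0$: $b(0)v = \mathrm{pr}_{m+1}(a(0)v) = \mathrm{pr}_{m+1}(Tv)$; since $Tv = v(-2)\1 \in V_{m+1}$ (the operator $T$ raises the fixed grading by one), this equals $Tv$, so $b(0)=T$ on all of $V$.

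The semisimplicity condition is where Lemma~\ref{linear} enters. As $a \in CV$, the operator $a(1)$ is semisimple on $V$, so Lemma~\ref{linear} applies and tells me that $b(1)=\mathrm{pr}_2(a)(1)$ is semisimple on $V$ and that $\mathrm{pr}_n$ induces isomorphisms $(V_{\geq n}\cap V_m^a)/(V_{\geq n+1}\cap V_m^a) \xrightarrow{\sim} V_n \cap V_m^{b}$ for all $n,m$. The one point requiring a little care is that the eigenvalues of $b(1)$ lie in $\Z$, not merely that $b(1)$ is diagonalizable: if $V_m^{b}\neq 0$, then $V_n\cap V_m^{b}\neq 0$ for some $n$, so the displayed isomorphism forces $V_{\geq n}\cap V_m^a\neq 0$, hence $V_m^a\neq 0$, hence $m\in\Z$ because $a$ is a conformal vector. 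Thus $V=\bigoplus_{m\in\Z}V_m^{b}$, and $b=\mathrm{pr}_2(a)\in CV\cap V_2$, which is the first assertion.

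For the last assertion, fix $n\in\Z$. Since $b\in V_2$, the operator $b(1)$ commutes with every $\mathrm{pr}_k$, so $V_n^{b}=\bigoplus_{k\in\Z}(V_k\cap V_n^{b})$. On the other hand the subspaces $V_{\geq k}\cap V_n^a$ form an exhaustive, separated, decreasing filtration of $V_n^a$ (exhaustive and separated because $V_n^a \subset V=\bigoplus_k V_k$ and $\bigcap_k V_{\geq k}=0$), whose $k$-th graded piece is carried isomorphically onto $V_k\cap V_n^{b}$ by $\mathrm{pr}_k$, again by Lemma~\ref{linear}. Lifting a basis of $V_n^{b}$ adapted to the above direct sum decomposition through the surjections $\mathrm{pr}_k\colon V_{\geq k}\cap V_n^a \to V_k\cap V_n^{b}$ yields, by the standard bottom-degree argument, a basis of $V_n^a$; hence $V_n^a\cong V_n^{b}$ as vector spaces. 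The main obstacle in this proof is less any individual step than the discipline of tracking which graded component of which product one projects onto in each application of Lemma~\ref{bot}; the two substantive points are the integrality of the eigenvalues of $b(1)$ (for which the isomorphisms of Lemma~\ref{linear}, not just the semisimplicity of $b(1)$, are needed) and making the lift of a graded basis through the filtration of $V_n^a$ precise.
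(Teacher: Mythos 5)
Your proof is correct and follows essentially the same route as the paper's: Lemma \ref{bot} handles the OPE and derivation conditions, Lemma \ref{linear} gives semisimplicity of $\mathrm{pr}_2(a)(1)$ together with the isomorphisms $(V_{\geq n}\cap V_m^a)/(V_{\geq n+1}\cap V_m^a)\cong V_n\cap V_m^{\mathrm{pr}_2(a)}$, and the final dimension count is the same associated-graded argument the paper uses (written there as $V_m^{\mathrm{pr}_2(a)}\cong\bigoplus_n(V_m^a\cap V_{\geq n})/(V_m^a\cap V_{\geq n+1})\cong V_m^a$). Your explicit treatment of the integrality of the eigenvalues of $\mathrm{pr}_2(a)(1)$ is a point the paper leaves implicit, but it is the same proof.
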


\begin{proof}
To prove $\mathrm{pr}_2(a) \in CV$,
it suffices to show that $\mathrm{pr}_2(a)$ satisfies the conditions (1),...,(5).
According to Lemma \ref{bot}, $\mathrm{pr}_{3-n}(a(n)a)=\mathrm{pr}_2(a)(n)\mathrm{pr}_2(a)$.
This shows that the conditions (1), (2), and (3) hold. 
Using $a(0)=L(-1)$, we have the condition (4).
According to Lemma \ref{linear}, $\mathrm{pr}_2(a)(1)$ is semisimple, and the maps $(V_m^a \cap V_{\geq n})/(V_m^a \cap V_{\geq n+1}) \rightarrow V_m^{\mathrm{pr}_2(a)} \cap V_n$ are linear isomorphisms for all $n,m \in \Z$.
Thus, the condition (5) holds.
Furthermore, $V_m^{\mathrm{pr}_2(a)} \cong
\bigoplus_{n \in \Z} \,V_m^{\mathrm{pr}_2(a)}\cap V_n \cong
\bigoplus_{n \in \Z} \,(V_m^a \cap V_{\geq n})/(V_m^a \cap V_{\geq n+1}) \cong V_m^a
$ for all $m \in \Z$.
\end{proof}

\begin{cor}
\label{rem_cft}
If $a \in CV_{CFT} \cap V_{\geq 2} $, then $\mathrm{pr}_2(a) \in CV_{CFT}$.
\end{cor}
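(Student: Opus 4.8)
The plan is to deduce everything from Lemma \ref{pr2}, which already does the substantive work. Given $a \in CV_{CFT} \cap V_{\geq 2}$, Lemma \ref{pr2} tells us that $\mathrm{pr}_2(a) \in CV \cap V_2$ and that $V_n^a \cong V_n^{\mathrm{pr}_2(a)}$ as vector spaces for every $n \in \Z$. So the only thing left is to promote ``conformal vector'' to ``conformal vector of CFT type'', i.e.\ to verify conditions (6), (7), (8) for $b := \mathrm{pr}_2(a)$ using the corresponding conditions for $a$.

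First I would handle the grading and finite-dimensionality. Since $a \in CV_{CFT}$, we have $V = \bigoplus_{n \in \mathbb{N}} V_n^a$, so $V_n^a = 0$ for $n \in \Z \setminus \mathbb{N}$, and $\dim V_n^a < \infty$ for $n \in \mathbb{N}$. Transporting these facts through the isomorphisms $V_n^a \cong V_n^{b}$ gives $V_n^{b} = 0$ for $n \notin \mathbb{N}$ and $\dim V_n^{b} = \dim V_n^a < \infty$ for $n \in \mathbb{N}$. Because $b \in CV$, condition (5) gives $V = \bigoplus_{n \in \Z} V_n^{b}$, which together with the vanishing just established yields $V = \bigoplus_{n \in \mathbb{N}} V_n^{b}$; this is (6), and (7) is immediate.

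For condition (8) I would argue as follows. From $\dim V_0^{b} = \dim V_0^a = \dim \mathbb{K}\1 = 1$. On the other hand, by the vacuum axiom $b(1)\1 = 0$, so $\1 \in V_0^{b}$; since $V_0^{b}$ is one-dimensional this forces $V_0^{b} = \mathbb{K}\1$. Combining (6), (7), (8) with $b \in CV$ gives $b = \mathrm{pr}_2(a) \in CV_{CFT}$, as desired.

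There is essentially no obstacle here: the corollary is a direct bookkeeping consequence of Lemma \ref{pr2}, the only minor point being to remember that $\1$ automatically lies in the degree-zero eigenspace of any conformal vector, so that the dimension count actually pins down $V_0^{b}$.
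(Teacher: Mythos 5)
Your proof is correct and follows exactly the route the paper intends: the paper states this corollary without proof as an immediate consequence of Lemma \ref{pr2}, and your argument supplies precisely the bookkeeping that is left implicit (transporting conditions (6)--(8) through the isomorphisms $V_n^a \cong V_n^{\mathrm{pr}_2(a)}$, with the observation that $\1 \in V_0^{\mathrm{pr}_2(a)}$ pinning down the degree-zero space). No gaps.
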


\section{Shape of Conformal Vectors of CFT type}
Hereafter, we assume that $(V,\omega)$ is a VOA of CFT type.
In this case, $V_1$ has a Lie algebra structure such that $[a, b] = a(0)b$ for $a, b \in V_1$. 
Set $\mathrm{Aut}_\omega\,V=\{f \in \mathrm{Aut}\, V \,|\, f(\omega)=\omega  \}$.
 
 \begin{lem}
\label{zero}
Let $a \in V_n$ with $n \geq 2$. If $a(0)\omega=0$, then $a \in \mathrm{Im}\,T$.
\end{lem}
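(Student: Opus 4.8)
The plan is to use skew-symmetry to pull $a$ itself out of the hypothesis $a(0)\omega=0$, modulo an explicit element of $\mathrm{Im}\,T$, and then to kill the remaining term using the injectivity of $T$ on $\bigoplus_{k\ge 1}V_k$.

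First I would write the skew-symmetry identity $Y(a,z)\omega=e^{zT}Y(\omega,-z)a$ in components:
$$a(0)\omega=\sum_{j\ge 0}\frac{(-1)^{j+1}}{j!}\,T^{j}\bigl(\omega(j)a\bigr)=-\omega(0)a+T(\omega(1)a)-\tfrac12 T^{2}(\omega(2)a)+\cdots .$$
Since $\omega(0)=L(-1)=T$ and $\omega(1)=L(0)$ acts by $n$ on $V_n$, the $j=0$ and $j=1$ terms combine into $(n-1)Ta$, while every term with $j\ge 2$ has a factor $T^{2}$; collecting these I get $a(0)\omega=(n-1)Ta+T^{2}c$ for an explicit $c\in V_{n-1}$ (the sum defining $c$ is finite because $\omega(j)a=L(j-1)a=0$ once $j-1>n$). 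Plugging in $a(0)\omega=0$ and using $n\ge 2$, I divide by $n-1$ and rearrange to $T\bigl(a+\tfrac{1}{n-1}Tc\bigr)=0$, so $a+\tfrac{1}{n-1}Tc\in\ker T\cap V_n$.

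The only point that is not a formal manipulation — and the step I expect to be the main obstacle — is to verify that $\ker T\cap V_n=0$ for $n\ge 1$. Here I would argue as follows: if $Tv=0$ with $v\in V_n$, then from $(Tv)(m)=-m\,v(m-1)$ we get $v(m)=0$ for all $m\ne -1$, so $Y(v,z)=v(-1)$ is a constant operator; in particular $v(i)\omega=0$ for all $i\ge 0$, so the commutator formula yields $[v(-1),\omega(1)]=\sum_{i\ge 0}\binom{-1}{i}(v(i)\omega)(-i)=0$. Hence $v(-1)$ commutes with $L(0)$ and preserves the grading; but $v\in V_n$ forces $v(-1)$ to raise weights by $n\ge 1$, so $v(-1)=0$ on every $V_k$, and therefore $v=v(-1)\bm{1}=0$. (Equivalently: a constant field is a $V$-module endomorphism of $V$, which must vanish if it shifts the $L(0)$-grading.)

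Putting the two steps together gives $a=-\tfrac{1}{n-1}Tc\in\mathrm{Im}\,T$, as claimed. I would only need to keep track of the normalization in the skew-symmetry expansion and of the sign in $(Tv)(m)=-m\,v(m-1)$; everything else is bookkeeping. (A variant would first reduce to the quasi-primary case $L(1)a=0$ via the decomposition $V_n=(\ker L(1)\cap V_n)\oplus L(-1)V_{n-1}$ and the identity $(Tb)(0)=0$, which shortens the skew-symmetry computation, but it still relies on injectivity of $T$ on $\bigoplus_{k\ge 1}V_k$, so the direct argument above is cleaner.)
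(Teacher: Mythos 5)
Your proof is correct and follows essentially the same route as the paper: expand $a(0)\omega$ by skew-symmetry to obtain $0=(n-1)Ta+T^{2}c$, then conclude using the injectivity of $T$ on $V_{\geq 1}$. The only difference is that you also supply a proof of that injectivity (via the constant-field argument showing $\ker T\cap V_n=0$ for $n\geq 1$), a fact the paper simply asserts as a consequence of $V$ being of CFT type.
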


\begin{proof}
By applying skew-symmetry to $a(0)\omega$, we have $
0=a(0)\omega = -Ta+TL(0)a-T^2\sum_{k \geq 2}(-1)^{k}T^{k-2}/k!L(k-1)a
\in (n-1)Ta+\mathrm{Im}\,T^2$. Since $n \neq 1$, $Ta \in \mathrm{Im}\,T^2$.
The operator $T$ is injective on $V_{\geq 1}$, since $V$ is of CFT type.
Thus, $a \in \mathrm{Im}\,T$.
\end{proof}

The Lie algebra $J_1(V) =\{a \in V_1\,|\,a(0)=0 \}$ is a subalgebra of the center of $V_1$.
The following lemma shows that
$J_1(V)$ is independent of the choice of the conformal vector of CFT type.

\begin{lem}
The map $J_1(V) \rightarrow \{a \in V\,|\,a(0)=0 \}/ \{\mathrm{ker}\,T+\mathrm{Im}\,T \}$
induced by the inclusion map $J_1(V) \rightarrow  \{a \in V\,|\,a(0)=0 \}$ is an isomorphism.
\end{lem}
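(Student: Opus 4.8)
The plan is to view the map as the composite $J_1(V)\hookrightarrow K \twoheadrightarrow K/(\ker T+\mathrm{Im}\,T)$, where $K=\{a\in V\mid a(0)=0\}$, and to check injectivity and surjectivity separately. First I would verify that the quotient is legitimate, i.e.\ that $\ker T+\mathrm{Im}\,T\subseteq K$: since $V$ is of CFT type, $T$ is injective on $V_{\geq 1}$ and $T\1=0$, so $\ker T=\mathbb{K}\1$, and $\1(0)=0$; the $L(-1)$-derivation property $(L(-1)b)(0)=0$ gives $\mathrm{Im}\,T\subseteq K$ as well. Also $J_1(V)=K\cap V_1$, so the inclusion $J_1(V)\hookrightarrow K$ and hence the whole composite make sense.

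For injectivity, suppose $a\in J_1(V)$ and $a=\lambda\1+Tb$ with $\lambda\in\mathbb{K}$ and $b\in V$. Applying $\mathrm{pr}_1$ and using $\1\in V_0$ together with $T(V_n)\subseteq V_{n+1}$ gives $a=T(\mathrm{pr}_0 b)$; but $\mathrm{pr}_0 b\in V_0=\mathbb{K}\1$ and $T\1=0$, so $a=0$.

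For surjectivity, take $a\in K$ and write $a=\sum_{n\geq 0}a_n$ with $a_n\in V_n$ (a finite sum). Since $a_n(0)$ sends $V_m$ into $V_{n+m-1}$, the graded components of the operator $a(0)$ are independent, so $a(0)=0$ forces $a_n(0)=0$ for every $n$. Then $a_0\in V_0=\ker T$, $a_1\in K\cap V_1=J_1(V)$, and for each $n\geq 2$ we have $a_n(0)\omega=0$, so Lemma \ref{zero} yields $a_n\in\mathrm{Im}\,T$. Hence $a\equiv a_1\pmod{\ker T+\mathrm{Im}\,T}$ with $a_1\in J_1(V)$, which is precisely what surjectivity requires.

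The only substantive ingredient is Lemma \ref{zero}, which disposes of the graded pieces in degrees $n\geq 2$; the degree $0$ and degree $1$ pieces are elementary. So I do not expect a genuine obstacle here — the argument amounts to a careful bookkeeping of the graded components of an element whose zero-mode annihilates $V$, and the well-definedness check (that $\ker T=\mathbb{K}\1$) is the only place one must invoke the CFT-type hypothesis directly.
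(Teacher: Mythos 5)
Your proof is correct and follows essentially the same route as the paper: injectivity comes down to $\ker T=V_0$ and $\mathrm{Im}\,T\cap V_1=0$ (your $\mathrm{pr}_1$ computation is exactly this), and surjectivity comes from decomposing $a$ into homogeneous components and applying Lemma \ref{zero} to the pieces of degree $\geq 2$. You simply spell out the well-definedness and the degree bookkeeping that the paper's two-line proof leaves implicit.
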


\begin{proof}
Since $(V,\omega)$ is of CFT type, $\mathrm{Im}\,T \cap V_1=0$ and $\mathrm{ker}\,T=V_0$ hold, which implies that the map is injective. By combining this with Lemma \ref{zero}, it is surjective.
\end{proof}

Recall that if $a \in J_1(V)$, then $a(1)$ is a locally nilpotent derivation and $\exp(a(1))$ is a vertex algebra automorphism of $V$ \cite[Lemma 1.3]{MN}.
In fact, we have the following:
\begin{lem}
\label{J1_exp}
If $a \in J_1(V)$, then $\exp(a(1)) \in \mathrm{Aut}^-\,V$.
\end{lem}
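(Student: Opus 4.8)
The plan is to verify directly the two defining conditions of $\mathrm{Aut}^-\,V$, taking as input only that $\exp(a(1))$ is already a vertex algebra automorphism by \cite[Lemma 1.3]{MN}. The crucial point is a degree count: since $a \in J_1(V) \subset V_1$, the grading axiom $V_i(k)V_j \subset V_{i+j-k-1}$ gives $a(1)V_n \subset V_{1+n-1-1} = V_{n-1}$ for all $n$, so the operator $a(1)$ lowers the $\Z$-grading by exactly $1$.

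First I would record that, by \cite{MN}, $a(1)$ is a locally nilpotent derivation of $V$, so that $\exp(a(1)) = \sum_{r \geq 0} \frac{1}{r!} a(1)^r$ is a well-defined linear endomorphism and in fact lies in $\mathrm{Aut}\,V$. Combining this with the degree count, $a(1)^r V_n \subset V_{n-r}$ for every $r \geq 0$, whence $\exp(a(1)) V_n \subset \bigoplus_{r \geq 0} V_{n-r} \subset \bigoplus_{k \leq n} V_k$. This is the first condition in the definition of $\mathrm{Aut}^-\,V$.

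For the normalization condition, take $v \in V_n$ and write $\exp(a(1))v = v + \sum_{r \geq 1} \frac{1}{r!} a(1)^r v$. Each term with $r \geq 1$ lies in $V_{n-r}$ with $n-r < n$, so $\mathrm{pr}_n(\exp(a(1))v) = v$; that is, $\mathrm{pr}_n \circ \exp(a(1))|_{V_n} = \mathrm{id}_{V_n}$ for all $n \in \Z$. Together with the previous paragraph and $\exp(a(1)) \in \mathrm{Aut}\,V$, this yields $\exp(a(1)) \in \mathrm{Aut}^-\,V$.

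I do not expect a genuine obstacle here. The only points requiring care are the direction in which the mode $a(1)$ shifts the grading — it lowers it, because $a$ sits in $V_1$ and we use the first product $a(1)$ — and the fact that local nilpotency, which is needed merely to make sense of the exponential, is precisely what \cite{MN} supplies.
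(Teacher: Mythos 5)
Your proof is correct and is exactly the argument the paper intends: the paper states this lemma without proof, relying on \cite[Lemma 1.3]{MN} for $\exp(a(1)) \in \mathrm{Aut}\,V$ and leaving the degree count $a(1)V_n \subset V_{n-1}$ (hence the two defining conditions of $\mathrm{Aut}^-\,V$) implicit. Your write-up simply supplies those omitted details, and correctly notes that local nilpotency is what makes the exponential well defined.
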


\begin{lem}
\label{J1}
Let $a \in CV$. Then $\mathrm{pr}_1(a) \in J_1(V)$ and  $\exp(-\mathrm{pr}_1(a)(1))a \in V_{\geq 2} \cap CV$.
\end{lem}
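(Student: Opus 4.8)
The plan is to analyze a conformal vector $a \in CV$ component by component with respect to the fixed $\Z$-grading $V = \bigoplus_n V_n$, and to exploit the automorphism $\exp(-\mathrm{pr}_1(a)(1))$ to kill off the degree-one part. First I would write $a = \sum_{n} a_n$ with $a_n = \mathrm{pr}_n(a)$, and note that since $a$ is a conformal vector, $a(1)a = 2a$ forces, via Lemma \ref{bot}, relations among the $a_n$; in particular $\mathrm{pr}_2(a(1)a) = a_0(1)a_2 + a_1(1)a_1 + a_2(1)a_0 = 2a_2$ and lower-degree pieces. The crucial low-degree observation is that $a_0$ and $a_1$ must vanish. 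For $a_0 \in V_0$: since $(V,\omega)$ is of CFT type $V_0 = \mathbb{K}\bm{1}$, and $a(0) = T$ kills $\bm{1}$ while $a(1)$ acting gives $a_0(1)$ as a scalar; but $a(1)$ is semisimple with integer eigenvalues and $a(0)=T=L(-1)$, and one checks $\bm 1 \in V_0^a$ forces the scalar part to be consistent only with $a_0 = 0$ (alternatively: $\mathrm{pr}_0(a(0)a) = a_0(0)a_1 + a_1(0)a_0 = T a_0 = 0$ automatically, but $a(-1)\bm1 = a$ and grading arguments pin $a_0=0$; the cleanest route is that $a$ being conformal gives a Virasoro action with $L(-1)=T$ and $L(0)$ eigenvalues in $\Z$, and $V_{-n}^a$-type considerations together with $V = \bigoplus_{n\in\mathbb N}V_n$ force $a \in V_{\geq 1}$, then $a \in V_{\geq 2} \oplus V_1$).

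Next I would handle the degree-one component $a_1 = \mathrm{pr}_1(a)$. The claim $\mathrm{pr}_1(a) \in J_1(V)$ means $a_1(0) = 0$ as an operator on $V$. To see this, apply $\mathrm{pr}_{n-1}$ to the identity $a(0)v = Tv \in V_{\geq ?}$ for $v \in V_n$: by Lemma \ref{bot}, $\mathrm{pr}_{n-1}(a(0)v) = a_0(0)\mathrm{pr}_n(v) + \text{(nothing else of the right degree)}$ — more precisely, since $a(0) = T$ has degree $-1$ (as $T = L(-1)$ raises conformal weight by one but here we are using the \emph{fixed} grading), I need to be careful: $T$ maps $V_n$ to $V_{n+1}$ in the $\omega$-grading but we are decomposing $a$ in a possibly different fixed grading. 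The honest statement is: $a(0) = T$ and $T$ is the canonical translation operator, which is homogeneous of degree... — here I would instead use the relation $a(0)a = Ta$ combined with $a(1)a = 2a$, and the OPE relations $(1)$–$(5)$, extracting the bottom component. Concretely, $a_1 \in V_1$ and for any $v \in V_1$, $\mathrm{pr}_0(a(0)v)$: the only contribution of degree $0$ is $a_1(0)v$, and $a(0)v = Tv \in \mathrm{Im}\,T$; since $V$ is of CFT type $\mathrm{Im}\,T \cap V_1 = 0$ but we want the degree-$0$ piece $\mathrm{pr}_0(Tv) = 0$ as $\ker T = V_0$ and $Tv$ has no $V_0$-component for $v\neq \bm 1$-related — thus $a_1(0)v = 0$ for all $v \in V_1$. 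Extending to all of $V$ requires showing $a_1$ acts as a derivation that is zero; since $a_1(0)$ is a derivation of the vertex algebra and $V$ is generated by... this is where I would invoke that $a_1(0)$ being zero on low degrees plus the derivation property propagates, OR more simply use the known fact (from Lemma \ref{zero}-type skew-symmetry) that $a_1(0)\omega = \mathrm{pr}_1(a(0)\omega) = \mathrm{pr}_1(Ta)$, and chase degrees. The clean argument: $a_1(0)\bm1 = Ta_1$, and one shows $a_1 \in J_1(V)$ iff $Ta_1 = $ appropriate thing; I expect $\mathrm{pr}_1(a)\in J_1(V)$ to follow from $a(0)a = Ta$ by taking $\mathrm{pr}_1$.

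For the second assertion, set $b = \exp(-a_1(1))a$ where $a_1 = \mathrm{pr}_1(a) \in J_1(V)$. By Lemma \ref{auto_stable}, $b \in CV$ since $\exp(-a_1(1)) \in \mathrm{Aut}\,V$ (Matsuo–Nagatomo, cited). It remains to show $b \in V_{\geq 2}$, i.e. $\mathrm{pr}_0(b) = 0$ and $\mathrm{pr}_1(b) = 0$. By Lemma \ref{J1_exp}, $\exp(-a_1(1)) \in \mathrm{Aut}^-\,V$, so $\exp(-a_1(1)) - \mathrm{id}$ sends $V_n$ into $\bigoplus_{k<n}V_k$ and $\mathrm{pr}_n \circ \exp(-a_1(1))|_{V_n} = \mathrm{id}$. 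Therefore $\mathrm{pr}_1(b) = \mathrm{pr}_1(\exp(-a_1(1))a) = \mathrm{pr}_1(\exp(-a_1(1)) a_1) + \mathrm{pr}_1(\exp(-a_1(1))(a_{\geq2}))$; the first term is $a_1$ (identity on $V_1$) plus lower-degree stuff, the second term: $\exp(-a_1(1))$ applied to $V_{\geq 2}$ lands in $V_{\geq 0}$ with possible degree-$1$ components coming from $-a_1(1)a_2$, $\tfrac12 a_1(1)^2 a_3$, etc. So I need $\mathrm{pr}_1(b) = a_1 - a_1(1)a_2 + \ldots = 0$. This is where I would compute: the degree-one part of $\exp(-a_1(1))a$ should collapse precisely because of the conformal relations. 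The key computation is that $a_1(1)a_2 = a_1$ (up to sign/normalization) — indeed from $a(1)a = 2a$, taking $\mathrm{pr}_1$: $\mathrm{pr}_1(a(1)a) = a_0(1)a_2 + a_1(1)a_1 + a_2(1)a_0 = 2a_1$, wait that gives degree... let me recompute: $a_i(1)a_j$ has degree $i + j - 2$, so degree $1$ requires $i+j = 3$, giving $a_1(1)a_2 + a_2(1)a_1 = 2a_1$ (using $a_0 = 0$), and by skew-symmetry these two terms are related, yielding $a_1(1)a_2 = a_1 + \mathrm{Im}\,T$-type corrections. Combined with $a_1(1)$ being locally nilpotent and the exponential series telescoping, I expect $\mathrm{pr}_1(b) = 0$ to drop out; similarly $\mathrm{pr}_0(b) = 0$ trivially since nothing has negative enough degree. \textbf{The main obstacle} I anticipate is the bookkeeping in showing $\mathrm{pr}_1(\exp(-a_1(1))a) = 0$: one must show the full series $\sum_{k\geq 0}\tfrac{(-1)^k}{k!}\mathrm{pr}_1(a_1(1)^k a_{k+1})$ vanishes, which requires identifying $a_1(1)^k a_{k+1}$ in terms of $a_1$ via repeated use of $a(1)a = 2a$ and $a(0) = T$ in low degrees — essentially proving $a_1(1)^k a_{k+1} = \binom{?}{?}a_1$ by induction, then summing a binomial-type series to zero. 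I would isolate this as a small lemma proved by downward induction on degree using Lemma \ref{bot}.
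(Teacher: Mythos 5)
Your plan has three genuine problems. First, the claim that $a_0=\mathrm{pr}_0(a)$ and $a_1=\mathrm{pr}_1(a)$ ``must vanish'' is false: extracting the degree-zero component of $a(1)a=2a$ via Lemma \ref{bot} gives $2a_0=a_1(1)a_1$ (the terms $a_0(1)a_2$ and $a_2(1)a_0$ die because $a_0\in\mathbb{K}\1$), so $a_0\neq 0$ whenever $a_1(1)a_1\neq 0$ --- for instance $\exp(\mu h(1))\omega$ in the Heisenberg VOA is a conformal vector with nonzero components in degrees $0$ and $1$. Consequently your assertion that $\mathrm{pr}_0(\exp(-a_1(1))a)=0$ holds ``trivially'' is wrong: that projection equals $a_0-a_1(1)a_1+\tfrac12 a_1(1)^2a_2-\cdots$, and its vanishing is part of what must be proved. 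Second, your argument that $a_1(0)=0$ is only carried out for $v\in V_1$, and the extension to all of $V$ is left open. The fix is the degree count you circle around but never commit to: with respect to the fixed grading (which from Section 4 on is the $L(0)$-grading, so there is no clash of gradings to worry about), $T$ is homogeneous of degree $+1$ while $a_n(0)$ is homogeneous of degree $n-1$; hence the operator identity $\sum_n a_n(0)=a(0)=T$ splits by degree into $a_n(0)=0$ for all $n\neq 2$, and in particular $a_1(0)=0$ on all of $V$ with nothing to ``propagate.''

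Third, and most seriously, the heart of the second assertion --- $\mathrm{pr}_1(\exp(-a_1(1))a)=0$ --- is left as a plan resting on a hoped-for identity $a_1(1)^k a_{k+1}=\binom{?}{?}\,a_1$. The relation $a(1)a=2a$ does not determine $a_1(1)a_{k+1}$ term by term for $k\geq 2$ (it only constrains sums such as $a_1(1)a_2+a_2(1)a_1$), so this induction will not close. The missing idea is structural rather than computational: since $a_n(0)=0$ for $n\neq 2$ and $(a_2-\omega)(0)=0$, Lemma \ref{zero} applies to each homogeneous component of $a-\omega$ of degree at least $2$, giving $a-a_0-a_1-\omega\in\mathrm{Im}\,T\subset V_{\geq 2}$. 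The automorphism $\exp(-a_1(1))$ commutes with $T$ and therefore preserves $\mathrm{Im}\,T$, so the entire infinite tail of $a$ is disposed of at once; only the finite piece $a_0+a_1+\omega$ requires explicit computation, namely $\exp(-a_1(1))(a_0+a_1+\omega)=\omega$, which follows from $2a_0=a_1(1)a_1$ together with $a_1(1)\omega=a_1$ (skew-symmetry). Without this reduction your series bookkeeping has no mechanism to terminate.
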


\begin{proof}
Set $a_n = \mathrm{pr}_n(a)$ for all $n \in \mathbb{N}$.
Recall that $T$ is an operator of degree one and $a_n(0)$ is of degree $n-1$. Therefore, by $T =a(0)$, we have $a_n(0)=0$ for all 
$n \neq 2$. Thus, $a_1 \in J_1(V)$. According to Lemma \ref{J1_exp}, $\exp(-a_1(1))$ is a vertex algebra automorphism.
 Hence, $\exp(-a_1(1))a \in CV$.
It remains to prove that $\exp(-a_1(1))a \in V_{\geq 2}$.
First, we prove that  $\exp(-a_1(1))(a_0+a_1+\omega)=\omega$.
Clearly, $\exp(-a_1(1))(a_0)=a_0$.
Using $a(1)a=2a$ and $a_0 \in \mathbb{K}\bm{1}$, we have
$2a_0=\mathrm{pr}_0(2a)=\mathrm{pr}_0(a(1)a)
=\mathrm{pr}_0(a_0(1)a_2+a_1(1)a_1+a_2(1)a_0+...)
=a_0(1)a_2+a_1(1)a_1+a_2(1)a_0=a_1(1)a_1 \label{J1_1}$.
This gives $\exp(-a_1(1))a_1=a_1-a_1(1)a_1=a_1-2a_0$.
By applying skew-symmetry, we have $a_1(1)\omega=L(0)a_1+TL(1)a_1+...=a_1$; thus, $\exp(-a_1(1))\omega=\omega-a_1(1)\omega+(1/2)a_1(1)^2\omega= \omega -a_1+a_0$.
Hence, $\exp(-a_1(1))(a_0+a_1+\omega)=\omega$.
According to Lemma \ref{zero} and $(a-\omega)(0)=T-T=0$, we have $a-a_0-a_1-\omega \in \mathrm{Im}\,T$.
Since $\exp(-a_1(1))$ is an automorphism,
 it preserves $\mathrm{Im}\,T$, which is a subset of $V_{\geq 2}$.
 Hence, $\exp(-\mathrm{pr}_1(a)(1))a=\exp(-\mathrm{pr}_1(a)(1))(a_0+a_1+\omega+a-a_0-a_1-\omega) \in \omega + \mathrm{Im}\,T \subset V_{\geq 2}$.
\end{proof}

\begin{prop}
\label{Aut_m}
The map $J_1(V) \rightarrow \mathrm{Aut}^{-}\,V$, $a \mapsto \exp(a(1))$ is an isomorphism of groups, where  $J_1(V)$ is
considered as the additive group.
\end{prop}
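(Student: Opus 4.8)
The plan is to check that $\Phi\colon J_1(V)\to\mathrm{Aut}^-\,V$, $a\mapsto\exp(a(1))$, is a group homomorphism, and that it is a bijection by exhibiting the explicit inverse $\pi\colon\mathrm{Aut}^-\,V\to J_1(V)$, $f\mapsto\mathrm{pr}_1(f(\omega))$. Both maps are well defined: $\Phi$ by Lemma \ref{J1_exp}, and $\pi$ because for $f\in\mathrm{Aut}^-\,V$ the vector $f(\omega)$ lies in $CV$ by Lemma \ref{auto_stable}, hence $\mathrm{pr}_1(f(\omega))\in J_1(V)$ by Lemma \ref{J1}. Throughout I use that $V=\bigoplus_{n\ge 0}V_n$ with $\dim V_n<\infty$, so any operator of negative degree (in particular $v(1)$ for $v\in V_1$) is locally nilpotent, and that $\mathrm{Aut}^-\,V$ is closed under composition (immediate from the definition, as in Lemma \ref{subgroup}).

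For the homomorphism property, let $a,b\in J_1(V)$. The standard commutator formula gives $[a(1),b(1)]=(a(0)b)(2)+(a(1)b)(1)$; here $a(0)b=0$ since $a\in J_1(V)$, and $a(1)b\in V_0=\mathbb{K}\1$, so $(a(1)b)(1)=0$. Thus $a(1)$ and $b(1)$ commute, and $\exp(a(1))\exp(b(1))=\exp\!\big((a+b)(1)\big)=\Phi(a+b)$. Next, for any $v\in V_1$ one has $v(1)\omega=v$: by skew-symmetry $v(1)\omega=L(0)v-TL(1)v+\cdots$, where $L(0)v=v$, $L(1)v\in V_0=\mathbb{K}\1$ and $T\1=0$, the remaining terms vanishing for degree reasons (the computation already used in the proof of Lemma \ref{J1}). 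Consequently, for $v\in J_1(V)$, $\exp(v(1))\omega=\omega+v(1)\omega+\tfrac12 v(1)^2\omega=\omega+v+\tfrac12 v(1)v$ with $v(1)v\in V_0$, so $\pi(\Phi(v))=\mathrm{pr}_1\big(\exp(v(1))\omega\big)=v$. In particular $\Phi$ is injective.

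It remains to prove $\Phi\circ\pi=\mathrm{id}$, i.e. that $\Phi$ is surjective. Fix $f\in\mathrm{Aut}^-\,V$, set $v=\pi(f)\in J_1(V)$, $g=\Phi(v)=\exp(v(1))$, and $h=g^{-1}f$; then $h\in\mathrm{Aut}^-\,V$ since $g^{-1}=\exp(-v(1))\in\mathrm{Aut}^-\,V$ by Lemma \ref{J1_exp}. Because $f\in\mathrm{Aut}^-\,V$ we may write $f(\omega)=\omega+v+u_0$ with $u_0\in V_0$. Expanding $\exp(-v(1))$ termwise and using $v(1)\omega=v$, $v(1)v\in V_0$, $v(1)u_0=0$, one computes
\[
\mathrm{pr}_1\big(h(\omega)\big)=\mathrm{pr}_1\!\Big(\exp(-v(1))(\omega+v+u_0)\Big)=(-v)+v+0=0 .
\]
Now $h(\omega)\in CV$ by Lemma \ref{auto_stable}, so Lemma \ref{J1} applied to $h(\omega)$ gives $h(\omega)=\exp\!\big(-\mathrm{pr}_1(h(\omega))(1)\big)h(\omega)\in V_{\ge 2}$; since also $h\in\mathrm{Aut}^-\,V$ forces $h(\omega)\in\bigoplus_{k\le 2}V_k$ with $\mathrm{pr}_2(h(\omega))=\omega$, we conclude $h(\omega)=\omega$. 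Then $h$ commutes with $L(0)=\omega(1)$, hence $h(V_n)=V_n$ for all $n$; combined with $\mathrm{pr}_n\circ h|_{V_n}=\mathrm{id}$ (from $h\in\mathrm{Aut}^-\,V$) this gives $h|_{V_n}=\mathrm{id}$ for all $n$, i.e. $h=\mathrm{id}$. Therefore $f=g=\Phi(\pi(f))$, and $\Phi$ is a bijective homomorphism with inverse $\pi$.

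The main obstacle is the surjectivity step, and within it the normalization identity $\mathrm{pr}_1\big(\exp(-v(1))(\omega+v+u_0)\big)=0$: it rests on the identity $v(1)\omega=v$ together with careful degree bookkeeping ensuring that only finitely many terms of the exponentials contribute. Once this vanishing is in hand, the conclusion $h=\mathrm{id}$ follows formally from Lemma \ref{J1} and from the interplay between the defining conditions of $\mathrm{Aut}^-\,V$ and the fact that an automorphism fixing $\omega$ is grading-preserving.
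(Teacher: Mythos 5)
Your proof is correct, and its overall architecture matches the paper's: injectivity via $\mathrm{pr}_1(\exp(a(1))\omega)=a$, and surjectivity by showing that $\exp(-\mathrm{pr}_1(f(\omega))(1))\circ f$ fixes $\omega$ and that an element of $\mathrm{Aut}^{-}\,V$ fixing $\omega$ must be the identity (the $\mathrm{Aut}^{-}$ analogue of Lemma \ref{auto_unique}, which you spell out where the paper only remarks that ``the proof of Lemma \ref{auto_unique} also works''). The one genuinely different step is the homomorphism property: the paper verifies $\exp(a(1))\exp(b(1))\omega=\exp((a+b)(1))\omega$ by direct computation on $\omega$ and then again invokes the rigidity of $\mathrm{Aut}^{-}\,V$, whereas you use the commutator formula $[a(1),b(1)]=(a(0)b)(2)+(a(1)b)(1)$ to show that for $a,b\in J_1(V)$ the operators $a(1)$ and $b(1)$ actually commute (since $a(0)=0$ and $a(1)b\in V_0=\mathbb{K}\bm{1}$ with $\bm{1}(1)=0$), so the exponentials multiply at the operator level. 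This is a clean alternative and slightly stronger, since it yields $\exp(a(1))\exp(b(1))=\exp((a+b)(1))$ as operators without any appeal to uniqueness; the trade-off is that it uses the Borcherds commutator formula rather than only the evaluation-on-$\omega$ technique the paper reuses throughout. The remaining details of your surjectivity step (the normalization $\mathrm{pr}_1(h(\omega))=0$, the use of Lemma \ref{J1} to force $h(\omega)\in V_{\geq 2}$ and hence $h(\omega)=\omega$, and the conclusion $h=\mathrm{id}$ from grading preservation) are all sound.
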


\begin{proof}
Let $a, b \in J_1(V)$.
A similar computation as in the proof of Lemma \ref{J1} shows that 
$\exp(a(1))\exp(b(1))\omega=\omega+a+b+(1/2)a(1)a+a(1)b+(1/2)b(1)b=\exp((a+b)(1))\omega$.
The proof of Lemma \ref{auto_unique} also works for $\mathrm{Aut}^{-}\,V$.
Hence, $\exp(a(1))\exp(b(1))=\exp((a+b)(1))$ holds. 
Since $\mathrm{pr}_1(\exp(a(1))\omega)=a$, the map is injective. Let $f \in \mathrm{Aut}^{-}\,V$. Since $\exp(-\mathrm{pr}_1(f(\omega))(1))f(\omega)=\omega$,
we have $f=\exp(\mathrm{pr}_1(f(\omega))(1))$.
Hence, the map is an isomorphism.
\end{proof}
By combining Lemma \ref{J1}, Corollary \ref{rem_cft}, and Theorem \ref{mor},
we have:
\begin{cor}
\label{cft}
If $V_2 \cap CV_{CFT}=\{ \omega \}$, then all the elements in $CV_{CFT}$ are conjugate under $\mathrm{Aut}\,V$.
\end{cor}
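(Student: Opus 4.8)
The plan is to move an arbitrary $a \in CV_{CFT}$ to $\omega$ in two steps: first correct its degree-one component using an automorphism coming from $J_1(V)$, then correct the higher components using the automorphism supplied by Theorem \ref{mor}. To begin, apply Lemma \ref{J1} to $a$: since $a$ is a conformal vector, $\mathrm{pr}_1(a) \in J_1(V)$ and the vector $b = \exp(-\mathrm{pr}_1(a)(1))a$ lies in $V_{\geq 2} \cap CV$. By Lemma \ref{J1_exp} the operator $\exp(-\mathrm{pr}_1(a)(1))$ is a vertex algebra automorphism (in fact it lies in $\mathrm{Aut}^-\,V$), so by Lemma \ref{auto_stable} the property of being a conformal vector of CFT type is preserved; hence $b \in CV_{CFT} \cap V_{\geq 2}$.

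Next, identify $\mathrm{pr}_2(b)$. By Corollary \ref{rem_cft}, $b \in CV_{CFT} \cap V_{\geq 2}$ implies $\mathrm{pr}_2(b) \in CV_{CFT}$; since $\mathrm{pr}_2(b) \in V_2$, the hypothesis $V_2 \cap CV_{CFT} = \{\omega\}$ forces $\mathrm{pr}_2(b) = \omega$. Thus $b$ belongs to $CV^+$. Being a conformal vector, $b$ automatically satisfies all the hypotheses of Theorem \ref{mor}: we have $b(0) = T$ and $b(1)b = 2b$ from the OPE and derivation axioms, and $b(1)$ is semisimple on $V$ with integer eigenvalues because $V = \bigoplus_{n \in \Z} V_n^b$. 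Therefore Theorem \ref{mor} yields $\psi_b \in \mathrm{Aut}^+\,V$ with $\psi_b(\omega) = b$.

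Finally, compose: put $f = \exp(\mathrm{pr}_1(a)(1)) \circ \psi_b$, which is an element of $\mathrm{Aut}\,V$ since $\exp(\mathrm{pr}_1(a)(1))$ is the inverse of the automorphism used in the first step. Then $f(\omega) = \exp(\mathrm{pr}_1(a)(1))(b) = a$. As $a \in CV_{CFT}$ was arbitrary, this shows that every conformal vector of CFT type lies in the $\mathrm{Aut}\,V$-orbit of $\omega$, whence any two of them are conjugate under $\mathrm{Aut}\,V$.

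There is no serious obstacle here: the statement is essentially a bookkeeping assembly of Lemma \ref{J1}, Corollary \ref{rem_cft}, and Theorem \ref{mor}. The only point needing attention is that each automorphism applied along the way preserves $CV_{CFT}$, which is exactly Lemma \ref{auto_stable}; this is also the reason the hypothesis must be phrased for $CV_{CFT}$ rather than for $CV$, since Corollary \ref{rem_cft} — the step that turns $b \in V_{\geq 2}$ into information about $\mathrm{pr}_2(b)$ — relies on the CFT-type conditions.
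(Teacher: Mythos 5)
Your proof is correct and follows exactly the route the paper intends: the paper's entire proof is the sentence ``By combining Lemma \ref{J1}, Corollary \ref{rem_cft}, and Theorem \ref{mor}, we have:'' and your argument is precisely that combination, written out with the details (stability of $CV_{CFT}$ under automorphisms, verification of the hypotheses of Theorem \ref{mor}) made explicit.
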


Let $a \in V_2 \cap CV$. According to Lemma \ref{zero} and $a(0)=T=\omega(0)$,
$a-\omega \in \mathrm{Im}\,T$.
Thus, we have:

\begin{lem}
\label{V2}
Let $a \in V_2 \cap CV$. Then, there exists $h \in V_1$ such that $a=\omega+Th$.
\end{lem}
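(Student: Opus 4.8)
The plan is to deduce this directly from Lemma \ref{zero}. Since $a$ is a conformal vector, it satisfies the derivation axiom $a(0) = T$, and the same holds for $\omega$; hence $(a-\omega)(0) = 0$, and in particular $(a-\omega)(0)\omega = 0$. As $a, \omega \in V_2$, their difference $a - \omega$ lies in $V_2$, so $n = 2 \geq 2$ and Lemma \ref{zero} applies and yields $a - \omega \in \mathrm{Im}\,T$.

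It then remains to choose a preimage of $a - \omega$ under $T$ that is homogeneous of degree $1$. First I would pick any $b \in V$ with $Tb = a - \omega$ and decompose $b = \sum_{k} \mathrm{pr}_k(b)$ into homogeneous components. Since $T$ raises the grading by one, $Tb = \sum_k T\,\mathrm{pr}_k(b)$ with $T\,\mathrm{pr}_k(b) \in V_{k+1}$; comparing with $a - \omega \in V_2$ forces $T\,\mathrm{pr}_k(b) = 0$ for $k \neq 1$ and $T\,\mathrm{pr}_1(b) = a - \omega$. Setting $h \coloneqq \mathrm{pr}_1(b) \in V_1$ then gives $a = \omega + Th$, as desired.

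I do not expect any real obstacle here: the statement is an immediate corollary of Lemma \ref{zero} together with the fact that $T$ has degree $+1$. The only point requiring a word of care is the passage from an arbitrary preimage in $V$ to a homogeneous one in $V_1$, which the grading argument above settles; the CFT-type hypothesis enters only through the invocation of Lemma \ref{zero}, not in this last step.
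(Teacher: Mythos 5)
Your proof is correct and follows the paper's own route exactly: apply Lemma \ref{zero} to $a-\omega\in V_2$, using $(a-\omega)(0)=T-T=0$, to get $a-\omega\in\mathrm{Im}\,T$. The extra step of extracting a homogeneous degree-$1$ preimage (using that $T$ has degree $+1$) is a detail the paper leaves implicit, and you handle it correctly.
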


\begin{prop}
\label{sslie}
If $V_1$ is $0$ or a semisimple Lie algebra, then $J_1(V)=0$ and $V_2 \cap CV_{CFT}=\{ \omega \}$.
\end{prop}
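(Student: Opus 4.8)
The plan is to treat the two conclusions in tandem, since both reduce to controlling elements of $V_1$. First I would handle $J_1(V)=0$. Recall $J_1(V)=\{a \in V_1 \mid a(0)=0\}$ and that this is contained in the center of the Lie algebra $V_1$. If $V_1=0$ there is nothing to prove, so assume $V_1$ is semisimple. A semisimple Lie algebra has trivial center, so the center of $V_1$ is $0$, whence $J_1(V) \subseteq Z(V_1) = 0$. The only subtlety is whether $a(0)=0$ as an operator on all of $V$ really forces $a$ to be central in $V_1$; but $[a,b]=a(0)b$ for $b \in V_1$ by definition of the Lie bracket on $V_1$, so $a(0)=0$ on $V$ certainly implies $a(0)b=0$ for all $b\in V_1$, i.e. $a \in Z(V_1)$. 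Hence $J_1(V)=0$.

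Next I would show $V_2 \cap CV_{CFT} = \{\omega\}$. Let $a \in V_2 \cap CV_{CFT}$. By Lemma \ref{V2} there exists $h \in V_1$ with $a = \omega + Th$, so it suffices to prove $Th = 0$, and since $T$ is injective on $V_{\geq 1}$ for a VOA of CFT type (as used in Lemma \ref{zero}), it suffices to prove $h=0$, or at least that we may take $h=0$. The strategy is to exploit the OPE relations characterizing a conformal vector — in particular $a(1)a = 2a$ — and extract from them an equation forcing $h$ into the center of $V_1$. Writing $a(1)a = (\omega+Th)(1)(\omega+Th)$ and expanding using $\omega(1)\omega = 2\omega$, the bracket-type commutator identities relating $(Th)(1)$ to $h(0)$ and $\omega$-action, and the constraint that the result must lie in $V_2$ and equal $2\omega + 2Th$, the cross terms should yield a relation of the form (schematically) $h(0)h \in \mathrm{Im}\,T$ together with $L(1)(Th)$-type terms; combined with $\dim V_1 < \infty$ and the invariant-form considerations implicit in CFT type, this should force $h(0)h = 0$, i.e. $h$ is central. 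Then $h \in Z(V_1) = 0$ (or $h \in J_1(V)=0$ after checking $h(0)=0$), giving $a = \omega$.

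More concretely, the cleanest route is probably: from $a \in CV$ one has $a(3)a \in \mathbb{K}\bm{1}$ and $a(n)a=0$ for $n=2$ and $n \geq 4$; feeding $a = \omega + Th$ into $a(2)a = 0$ and using the standard identities for the action of $T$ inside vertex operators ($({T}h)(n) = -n\,h(n-1)$) together with $L(1)$-relations should already pin down $h$. In fact $a(3)a \in \mathbb{K}\bm 1$ expands to $\omega(3)\omega + (\text{terms involving } h(2)\omega, h(1)h, \ldots) \in \mathbb{K}\bm 1$, and since $\omega(3)\omega = (c/2)\bm 1$ the remaining terms must vanish; the leading such term is a multiple of $(h(0)h)$ pushed up by powers of $T$, forcing $h(0)h=0$. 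Having shown $h$ is central in the Lie algebra $V_1$, semisimplicity (or the $V_1=0$ case) gives $h=0$ and hence $a=\omega$.

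The main obstacle I anticipate is the bookkeeping in the OPE expansion: correctly tracking the $T$-derivative identities and the $L(k)$-action on $V_1$ (using strong CFT type is not assumed here, only CFT type, so one cannot invoke $L(1)V_1=0$) to isolate the single relation $h(0)h=0$ from the several OPE constraints. Everything else — the center-triviality argument for $J_1(V)$, the injectivity of $T$, the passage from $h=0$ to $a=\omega$ — is routine given the lemmas already established.
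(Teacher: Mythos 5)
Your first assertion, $J_1(V)\subseteq Z(V_1)=0$, is exactly the paper's argument and is fine. The second half, however, rests on a step that cannot work: you propose to extract from the OPE axioms a relation forcing $h(0)h=0$ and then conclude that $h$ is central. There are two problems. First, $h(0)h=[h,h]=0$ holds automatically by antisymmetry of the Lie bracket on $V_1$, and in any case it would only say that $h$ commutes with \emph{itself}, not that $h\in Z(V_1)$; so even a successful extraction of this identity proves nothing. Second, and more fundamentally, for $a=\omega+Th$ with arbitrary $h\in V_1$ the OPE conditions (1)--(4) are \emph{all} automatically satisfied: using $(Th)(n)=-n\,h(n-1)$, $L(1)h\in V_0=\mathbb{K}\bm{1}$ and $h(1)h\in\mathbb{K}\bm{1}$ one computes $a(1)a=2\omega+2Th=2a$, $a(2)a=-2h(0)h=0$, $a(3)a\in\mathbb{K}\bm{1}$, $a(n)a=0$ for $n\geq 4$, and $a(0)=T+(Th)(0)=T$. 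So no OPE bookkeeping, however careful, can constrain $h$. The Heisenberg VOA makes this concrete: there $\omega+\lambda Tb$ is a conformal vector of CFT type for every $\lambda$, so any argument ignoring the structure of $V_1$ as a Lie algebra must fail.

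The information you are missing lies in the \emph{grading} condition for CFT type, not the OPE. Since $a(1)=L(0)-h(0)$ must be semisimple, and $h(0)$ commutes with $L(0)$, the operator $h(0)$ is semisimple; since $(V,a)$ is of CFT type with $V_0^a=\mathbb{K}\bm{1}$, the eigenvalues of $a(1)=1-h(0)$ on $V_1$ are positive integers, so $h(0)=\mathrm{ad}(h)$ has only non-positive eigenvalues on $V_1$. But a nonzero ad-semisimple element of a semisimple Lie algebra has nonzero eigenvalues occurring in pairs $\pm\alpha(h)$, hence both positive and negative ones. This contradiction forces $h=0$, which is the paper's proof.
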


\begin{proof}
Let $a \in V_2 \cap CV_{CFT}$.
When $V_1=0$, the assertion immediately follows from Lemma \ref{V2}.
Hence, we may assume that $V_1$ is a semisimple Lie algebra.
 In this case, the center of the Lie algebra $V_1$ is $0$. 
Then, $J_1(V)=0$.
By Lemma \ref{V2}, there exists $h \in V_1$ such that $a=\omega+Th$.
Since $h(0)$ commutes with $L(0)$ and $a(1)=L(0)-h(0)$ is semisimple, the operator $h(0)$ is also semisimple.
Since $(V, a)$ is of CFT type, the eigenvalues of $a(0)=L(0)-h(0)$ on $V_1$ must be positive integers.
If $h \neq 0$, then  $h(0)$ has both positive and negative eigenvalues, since $V_1$ is a semisimple Lie algebra.
Thus, $h=0$, and $a= \omega$.
\end{proof}
%
%

According to Lemma \ref{auto_stable}, $\mathrm{Aut}\,V$ acts on $CV_{CFT}$.
Furthermore, since $\mathrm{Aut}_\omega\,V$ preserves the grading, $\mathrm{Aut}_\omega\,V$ acts on $ V_2 \cap CV_{CFT}$.
Let us denote the set of orbits by $CV_{CFT}/\mathrm{Aut}\,V$ and $(V_2 \cap CV_{CFT})/ \mathrm{Aut}_\omega\,V$.
Then, we have a natural map $(V_2 \cap CV_{CFT})/ \mathrm{Aut}_\omega\,V \rightarrow CV_{CFT}/\mathrm{Aut}\,V$.
The following lemma asserts that the map is surjective:
\begin{lem}
If $a \in  V_{\geq 2} \cap CV_{CFT} $, then there exists $\psi \in \mathrm{Aut}\,V$ such that $\psi(\mathrm{pr}_2(a))=a$.
\end{lem}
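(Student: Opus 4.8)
The plan is to apply Theorem \ref{mor} not with respect to the given grading $V=\bigoplus_n V_n$ and conformal vector $\omega$, but with respect to the grading $V=\bigoplus_n V_n^{b}$ and conformal vector $b:=\mathrm{pr}_2(a)$. By Corollary \ref{rem_cft}, $b\in CV_{CFT}$, so $(V,\bigoplus_n V_n^{b},b)$ is again a $\Z$-graded conformal vertex algebra of CFT type, and all the results of Section 3 apply to it with $b$ in place of $\omega$ and $V_n^{b}$ in place of $V_n$. It thus suffices to verify that $a$ satisfies the hypotheses of Theorem \ref{mor} in this set-up: $a\in\bigoplus_{n\geq 2}V_n^{b}$, $\mathrm{pr}_2^{b}(a)=b$, $a(0)=T$, $a(1)a=2a$, and $a(1)$ is semisimple on $V$ with integer eigenvalues, where $\mathrm{pr}_n^{b}$ denotes the projection onto $V_n^{b}$. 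Granting this, Theorem \ref{mor} produces $\psi\in\mathrm{Aut}^{+}\,V\subseteq\mathrm{Aut}\,V$ with $\psi(b)=a$, i.e. $\psi(\mathrm{pr}_2(a))=a$. The conditions $a(0)=T$, $a(1)a=2a$ and the semisimplicity hold simply because $a\in CV_{CFT}$, so only the first two conditions require argument.

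For $a\in\bigoplus_{n\geq 2}V_n^{b}$: writing $a=\sum_n a_n$ with $a_n=\mathrm{pr}_n(a)\in V_n$, we have $a_0=a_1=0$ and $a_2=b$, and comparing homogeneous components in $a(0)=T$ forces $a_n(0)=0$ for all $n\geq 3$; Lemma \ref{zero} then gives $a_n\in\mathrm{Im}\,T$ for $n\geq 3$. Since $T=L^{b}(-1)$ raises the $b$-degree by one and kills $V_0^{b}=\mathbb{K}\1$, we get $\mathrm{Im}\,T\subseteq\bigoplus_{n\geq 2}V_n^{b}$, and combined with $b\in V_2^{b}$ this yields $a\in\bigoplus_{n\geq 2}V_n^{b}$.

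The main obstacle is to show $\mathrm{pr}_2^{b}(a)=b$. Set $b'':=\mathrm{pr}_2^{b}(a)$. By the previous step and Corollary \ref{rem_cft} applied to $(V,b)$, we have $b''\in V_2^{b}\cap CV_{CFT}$, so Lemma \ref{V2} applied to $(V,b)$ gives $b''=b+Tz$ for a unique $z\in V_1^{b}$. Because $b\in V_2$, the operator $b(1)$ preserves the grading $V=\bigoplus_n V_n$, and since $a-b=\sum_{n\geq 3}a_n\in\bigoplus_{n\geq 3}V_n$, it follows that $Tz=b''-b\in\bigoplus_{n\geq 3}V_n$, hence $z\in\bigoplus_{n\geq 2}V_n$. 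Consequently $z(0)$ strictly raises the original degree; but $z\in V_1^{b}$ means $z(0)$ also preserves each $V_m^{b}$, which is finite-dimensional and graded by the original grading, so $z(0)|_{V_m^{b}}$ is nilpotent for every $m$. On the other hand $b''(1)|_{V_m^{b}}=m\cdot\mathrm{id}-z(0)|_{V_m^{b}}$ is semisimple because $b''\in CV_{CFT}$, which is possible only when $z(0)|_{V_m^{b}}=0$; thus $z(0)=0$ on $V$. Finally, $z(0)=0$ together with $z\in\bigoplus_{n\geq 2}V_n$ lets us apply Lemma \ref{zero} to each homogeneous component of $z$, so $z\in\mathrm{Im}\,T$; but $\mathrm{Im}\,T\cap V_1^{b}=0$, whence $z=0$ and $\mathrm{pr}_2^{b}(a)=b$. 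The genuinely new ingredient is the nilpotent-versus-semisimple alternative for $z(0)$ on the finite-dimensional spaces $V_m^{b}$; the rest is bookkeeping between the two gradings and repeated use of Lemmas \ref{zero} and \ref{V2} together with Corollary \ref{rem_cft}.
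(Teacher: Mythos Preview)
Your proof is correct and follows essentially the same route as the paper: both apply Theorem \ref{mor} with respect to the $b=\mathrm{pr}_2(a)$-grading after establishing $\mathrm{pr}_2^{b}(a)=b$ by showing the element $z\in V_1^{b}$ with $\mathrm{pr}_2^{b}(a)=b+Tz$ lies in $V_{\geq 2}$ and then using the semisimple-versus-degree-raising tension to get $z(0)=0$. The only cosmetic differences are that the paper finishes $z=0$ via a short Lie-algebra computation in $V_1^{b}$ (using $\omega'=\omega+Th$) rather than your appeal to Lemma \ref{zero} and $\mathrm{Im}\,T\cap V_1^{b}=0$, and that the $\psi$ produced by Theorem \ref{mor} lies in $\mathrm{Aut}^{+}$ relative to the $b$-grading rather than the original one---which is harmless since only $\psi\in\mathrm{Aut}\,V$ is needed.
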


\begin{proof}
Set $\omega'=\mathrm{pr}_2(a)$.
Let $b \in V_{\geq 2}$ such that $a= \omega' + Tb$ by Lemma \ref{zero}.
We denote by $b^{\omega'}_1$ the image of $b$ under the projection 
$\bigoplus_{k \geq 0} V^{\omega'}_{k} \to V^{\omega'}_1$.
According to Lemma \ref{rem_cft}, $\omega'$ is a conformal vector of CFT type.
Thus, $a-\omega'=Tb  \in \mathrm{Im}\,T \subset V^{\omega'}_{\geq 2}$.
If $b^{\omega'}_1=0$, then $a-\omega' \in V^{\omega'}_{\geq 3}$, which implies that
 there exists $\psi \in \mathrm{Aut}\,V$ such that $\psi(\omega')=a$ by Theorem \ref{mor}.
Thus, it suffices to show that $b^{\omega'}_1=0$.
Since $b \in V_{\geq 2}$, and $V_{\geq 2}= \bigoplus_{k \geq 0}
 V_{\geq 2} \cap V^{\omega'}_{k}$, we have $b^{\omega'}_1 \in V_{\geq 2}$.
By Lemma \ref{linear}, $b^{\omega'}_1(0)$ is semisimple on $V$.
Thus, by combining this with $b^{\omega'}_1 \in V_{\geq 2}$, we obtain $b^{\omega'}_1(0)=0$.
Let $h \in V_1$ such that $\omega'=\omega + Th$.
Since $h=\omega(1)h=(\omega'(1)-Th(1))h=\omega'(1)h$, we have $h \in V^{\omega'}_1$.
Since $b^{\omega'}_1, h \in V^{\omega'}_1$ and $b^{\omega'}_1(0)=0$, we have $0=b^{\omega'}_1(0)h=-h(0)b^{\omega'}_1$.
Hence, $b^{\omega'}_1=\omega'(1)b^{\omega'}_1=(\omega+Th)(1)b^{\omega'}_1=\omega(1)b^{\omega'}_1$.
Since $b^{\omega'}_1 \in V_{\geq 2}$, this implies that $b^{\omega'}_1=0$, as desired.
\end{proof}

\begin{cor}
\label{surj_cft}
The natural map $(V_2 \cap CV_{CFT})/ \mathrm{Aut}_\omega\,V \rightarrow CV_{CFT}/\mathrm{Aut}\,V$ is
surjective.
\end{cor}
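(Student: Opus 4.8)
The plan is to read off Corollary \ref{surj_cft} from the three results that precede it: the (unlabelled) lemma just above, Lemma \ref{J1}, and Lemma \ref{auto_stable}. Recall that the natural map sends the $\mathrm{Aut}_\omega\,V$-orbit of a vector $b \in V_2 \cap CV_{CFT}$ to its $\mathrm{Aut}\,V$-orbit, and this is well defined simply because $\mathrm{Aut}_\omega\,V \subset \mathrm{Aut}\,V$. Consequently, surjectivity amounts to showing that every $\mathrm{Aut}\,V$-orbit inside $CV_{CFT}$ contains an element of $V_2 \cap CV_{CFT}$.

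So fix $a \in CV_{CFT}$. By Lemma \ref{J1}, $\mathrm{pr}_1(a) \in J_1(V)$, the map $\exp(-\mathrm{pr}_1(a)(1))$ is a vertex algebra automorphism of $V$, and $a' = \exp(-\mathrm{pr}_1(a)(1))\,a$ lies in $V_{\geq 2} \cap CV$; since $CV_{CFT}$ is stable under $\mathrm{Aut}\,V$ (Lemma \ref{auto_stable}), in fact $a' \in V_{\geq 2} \cap CV_{CFT}$, and $a'$ is $\mathrm{Aut}\,V$-conjugate to $a$. Now apply the lemma immediately preceding Corollary \ref{surj_cft} to $a'$: there exists $\psi \in \mathrm{Aut}\,V$ with $\psi(\mathrm{pr}_2(a')) = a'$. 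By Corollary \ref{rem_cft}, $\mathrm{pr}_2(a') \in CV_{CFT}$, and trivially $\mathrm{pr}_2(a') \in V_2$, so $\mathrm{pr}_2(a') \in V_2 \cap CV_{CFT}$. Chaining the two conjugacies, $a$ and $\mathrm{pr}_2(a')$ lie in a single $\mathrm{Aut}\,V$-orbit, so the class of $a$ in $CV_{CFT}/\mathrm{Aut}\,V$ is the image of the class of $\mathrm{pr}_2(a')$ in $(V_2 \cap CV_{CFT})/\mathrm{Aut}_\omega\,V$ under the natural map. This proves surjectivity.

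There is no genuine obstacle at this last step: the corollary is a formal assembly of earlier results, and all the real content has already been spent. It lives in Lemma \ref{J1}, which moves an arbitrary conformal vector of CFT type into $V_{\geq 2}$ (so that applying $\mathrm{pr}_2$ yields again a conformal vector of CFT type by Corollary \ref{rem_cft}), and in the preceding lemma, whose heart is the vanishing $b^{\omega'}_1 = 0$ deduced from the semisimplicity of $b^{\omega'}_1(0)$ — forced by $b^{\omega'}_1 \in V_{\geq 2}$ — together with the relation $\omega' = \omega + Th$ furnished by Lemma \ref{V2}. The only point in the corollary itself that deserves a line of care is not to skip the passage to $V_{\geq 2}$ before invoking the preceding lemma; without it the expression $\mathrm{pr}_2(a')$ would not even be meaningful as a conformal vector. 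Everything else is routine.
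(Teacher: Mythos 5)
Your proof is correct and follows exactly the route the paper intends: the corollary is stated immediately after the unlabelled lemma precisely so that it follows by first moving an arbitrary $a \in CV_{CFT}$ into $V_{\geq 2}$ via Lemma \ref{J1} (using Lemma \ref{auto_stable} to stay in $CV_{CFT}$), then invoking that lemma together with Corollary \ref{rem_cft}. Your write-up just makes explicit the chaining of conjugacies that the paper leaves implicit.
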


\section{Shape of Conformal Vectors of strong CFT type}
In this section, we prove our first main result (Theorem \ref{main}).

\begin{lem}
\label{pr2s}
Let $a \in CV_{sCFT} \cap V_{\geq 2}$. Then, $\mathrm{pr}_2(a) \in CV_{sCFT}$.
\end{lem}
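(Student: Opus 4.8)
\textbf{Proof plan for Lemma \ref{pr2s}.}
The plan is to leverage Lemma \ref{pr2} (or rather its CFT refinement, Corollary \ref{rem_cft}) together with the only additional content of the notion ``strong CFT type'', namely the condition $L(1)V_1=0$, where here $L(1)$ should be read as $(\mathrm{pr}_2(a))(2)$ restricted to the degree-one piece of the $\mathrm{pr}_2(a)$-grading. So I first observe that, by Corollary \ref{rem_cft}, $\mathrm{pr}_2(a)$ is already a conformal vector of CFT type; what is left is to verify the single axiom $\mathrm{pr}_2(a)(2)\,V^{\mathrm{pr}_2(a)}_1=0$. Write $a'=\mathrm{pr}_2(a)$ for brevity. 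By Lemma \ref{linear} (applied to $a(1)$, using $\mathrm{pr}_2(a)=a'$), the projection $\mathrm{pr}_1$ induces an isomorphism $(V_{\geq 1}\cap V^a_1)/(V_{\geq 2}\cap V^a_1)\to V_1\cap V^{a'}_1$; since $a\in CV_{sCFT}$ has eigenvalues in $\mathbb{N}$ and $V^a_0=\mathbb K\1$, in fact $V_{\geq 1}\cap V^a_1=V^a_1$, so every element of $V^{a'}_1$ lifts through $\mathrm{pr}_1$ to an element of $V^a_1$.

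The key step is then a degree-counting argument using Lemma \ref{bot}. Let $v'\in V^{a'}_1$ and lift it to $v\in V^a_1$ with $\mathrm{pr}_1(v)=v'$. The element $a(2)v$ lives in $V^a_{-1}$ (since $a(2)$ lowers the $a(1)$-eigenvalue by $2$ and $v$ has eigenvalue $1$), but $a$ is of strong CFT type, so $V^a_{-1}=0$, forcing $a(2)v=0$. Now apply Lemma \ref{bot}: $a\in V_{\geq 2}$ and $v\in V_{\geq 1}$, so $\mathrm{pr}_{2+1-2-1}(a(2)v)=\mathrm{pr}_0(a(2)v)=\mathrm{pr}_2(a)(2)\,\mathrm{pr}_1(v)=a'(2)v'$. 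Since $a(2)v=0$, we get $a'(2)v'=0$. As $v'\in V^{a'}_1$ was arbitrary, this gives $L(1)V^{a'}_1=0$ for the Virasoro action associated to $a'$, i.e.\ $a'=\mathrm{pr}_2(a)\in CV_{sCFT}$.

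I expect the main (really the only) obstacle to be bookkeeping: making sure the two $\mathbb Z$-gradings (the fixed one $V=\bigoplus V_n$ and the $a$-eigenspace one $V=\bigoplus V^a_n$) are not conflated, and confirming that the strong-CFT hypothesis on $(V,a)$ genuinely forces $V^a_{-1}=0$ and that a lift $v$ of $v'$ can be chosen inside $V^a_1$ rather than merely in $V_{\geq 1}$. Both of these are immediate from the CFT-type conditions $V=\bigoplus_{n\in\mathbb N}V^a_n$, $\dim V^a_n<\infty$, $V^a_0=\mathbb K\1$ already established for $a$, so there is no real difficulty — the statement is essentially a transport-of-structure along the isomorphisms of Lemma \ref{linear}, combined with the fact that $(2)$-products strictly lower $a$-degree out of the $\mathbb N$-graded range.
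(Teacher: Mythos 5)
Your overall strategy coincides with the paper's: reduce via Corollary \ref{rem_cft} to checking $\mathrm{pr}_2(a)(2)V_1^{\mathrm{pr}_2(a)}=0$, and transfer the vanishing of $a(2)$ on $V_1^a$ to $\mathrm{pr}_2(a)(2)$ on $V_1^{\mathrm{pr}_2(a)}$ using Lemmas \ref{linear} and \ref{bot}. But as written the argument contains a genuine error at its key step. For a conformal vector $a$ one has $L(1)=a(2)$, so $a(2)$ lowers the $a(1)$-eigenvalue by $1$, not by $2$: for $v\in V_1^a$ the element $a(2)v$ lies in $V_0^a=\mathbb{K}\bm{1}$, which is not zero. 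Hence $a(2)v=0$ does \emph{not} follow from CFT type (i.e.\ from $V_{-1}^a=0$); it is exactly the strong-CFT axiom $a(2)V_1^a=0$, which you correctly identified in your preamble but then never actually invoked. This is not cosmetic: your argument as written uses only that $(V,a)$ is of CFT type, and would therefore ``prove'' that $\mathrm{pr}_2(a)$ of any CFT-type conformal vector in $V_{\geq 2}$ is of strong CFT type — which is false (take $a=\omega+Th$ with $0\neq h\in J_1(V)$ and $h(1)V_1\neq 0$; then $a\in V_2\cap CV_{CFT}$, $\mathrm{pr}_2(a)=a$, and $a(2)V_1^a=-2h(1)V_1\neq 0$).

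There is a second, smaller gap in the lifting step. Writing $a'=\mathrm{pr}_2(a)$, Lemma \ref{linear} gives $V_1^{a'}=\bigoplus_{k}\mathrm{pr}_k(V_{\geq k}\cap V_1^a)$: the space $V_1^{a'}$ decomposes along the fixed grading, and its component in $V_k$ is $\mathrm{pr}_k(V_{\geq k}\cap V_1^a)$. The isomorphism $(V_{\geq 1}\cap V_1^a)/(V_{\geq 2}\cap V_1^a)\to V_1\cap V_1^{a'}$ that you quote only produces the component lying in $V_1$; since $V_1^a$ need not be contained in $V_1$, the space $V_1^{a'}$ may have nonzero components in $V_k$ for $k\geq 2$, and those do not lift through $\mathrm{pr}_1$. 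The repair is to run your computation for every $k$: for $v\in V_{\geq k}\cap V_1^a$, the strong-CFT axiom gives $a(2)v=0$, and Lemma \ref{bot} gives $0=\mathrm{pr}_{k-1}(a(2)v)=a'(2)\mathrm{pr}_k(v)$. With these two corrections your proof becomes exactly the one in the paper.
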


\begin{proof}
Let $a \in CV_{sCFT} \cap V_{\geq 2}$. According to Corollary 
\ref{pr2}, $\mathrm{pr}_2(a) \in CV_{CFT}$. It remains to show that $\mathrm{pr}_2(a)(2)V_1^{\mathrm{pr}_2(a)}=0$.
According to Proposition \ref{linear}, $V_1^{\mathrm{pr}_2(a)}=\bigoplus_{k \in \mathbb{N}} \mathrm{pr}_k(V_{\geq k} \cap V_1^a)$.
Let $k \in \mathbb{N}$ and $v \in V_{\geq k} \cap V_1^a$. Since $a \in CV_{sCFT}$, $a(2)V_1^a=0$.
According to Lemma \ref{bot},
$0=\mathrm{pr}_{k-1}(a(2)v)=\mathrm{pr}_2(a)(2)\mathrm{pr}_k(v).$
Thus, $\mathrm{pr}_2(a)(2)V_1^{\mathrm{pr}_2(a)}=0$, which proves the lemma.
\end{proof}

The following lemma is a generalization of Proposition 4.8 in \cite{CKLW}.

\begin{lem}
\label{scft}
If $(V,\omega)$ is a simple VOA of strong CFT type, then
$CV_{sCFT} \cap V_2 = \{ \omega \}$.
\end{lem}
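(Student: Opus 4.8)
The plan is to take an arbitrary $a \in CV_{sCFT} \cap V_2$ and show it must equal $\omega$. By Lemma \ref{V2} we may write $a = \omega + Th$ for some $h \in V_1$, so the whole problem reduces to proving $h = 0$. First I would extract the structural consequences of $a$ being a conformal vector: from $a(1)a = 2a$ one computes, using $a(1) = L(0) - h(0) + (\text{correction from } L(1)h \text{ terms})$ acting on $V_2$, a relation constraining $h$; more usefully, since $(V,a)$ is of CFT type, the operator $a(1) = L(0) - h(0) - \tfrac12 (L(1)h)(1)\cdots$ (the precise expansion of $(Th)(1)$ in terms of $h(0)$ and lower operators via skew-symmetry, as already used in the proof of Lemma \ref{J1}) must be semisimple with spectrum in $\mathbb{N}$ on all of $V$, and $V_0^a = \mathbb{K}\bm 1$. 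Since $h(0)$ commutes with $L(0)$ and $a(1)$ is semisimple, $h(0)$ is semisimple on $V$ (this is the same argument as in Proposition \ref{sslie}).

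Next I would bring in the strong CFT hypothesis on $(V,\omega)$, i.e. $L(1)V_1 = 0$, together with the invariant bilinear form it provides: a simple VOA of strong CFT type carries a nondegenerate invariant bilinear form $(\,\cdot\,,\,\cdot\,)$, and $L(1)V_1 = 0$ makes $V_1$ non-degenerately paired with itself via $(\,\cdot\,,\,\cdot\,)$, with $(u(1)v, w) $-type identities relating $h(0)$ to an operator that is \emph{skew}-symmetric (or symmetric) with respect to this form. The key point I want to exploit is: the grading operator for $a$ restricted to $V_1$ is $L(0)|_{V_1} - h(0)|_{V_1} = \mathrm{id}_{V_1} - h(0)|_{V_1}$, and this must have only positive-integer eigenvalues because $(V,a)$ is of CFT type (so $V_0^a = \mathbb K\bm 1$ forces $V_1 \subset \bigoplus_{n\ge 1} V_n^a$). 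Simultaneously, $h \in V_1$ itself: I would check $h \in V_1^a$ or compute $a(1)h$ explicitly to locate $h$ in the $a$-grading, deriving a numerical constraint.

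The crux is then an eigenvalue/trace obstruction: $h(0)$ is semisimple on $V_1$ and, by invariance of the bilinear form (using $L(1)h = 0$), its action is conjugate to a \emph{skew-adjoint} operator with respect to the positive-definite-on-diagonal pairing, hence has trace zero on $V_1$ — or, in the indefinite case, its eigenvalues are symmetric about $0$. But $\mathrm{id} - h(0)$ has all eigenvalues in $\mathbb Z_{>0}$ on $V_1$, i.e. every eigenvalue of $h(0)$ on $V_1$ is $\le 0$. Symmetry of the spectrum about $0$ then forces every eigenvalue to be exactly $0$, so $h(0)|_{V_1} = 0$; combined with the locality of $h(0)$ as a degree-zero operator and the fact (used as in Lemma \ref{J1}) that $h(0)$ kills $\bm 1$, and then propagating via the commutator $[h(0), v(n)] = (h(0)v)(n)$ with $v$ ranging over generators, one gets $h(0) = 0$ on all of $V$, i.e. $h \in J_1(V)$. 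A final short argument — $a = \omega + Th$ with $h \in J_1(V)$, but then $\exp(-h(1))$ fixes $a$ up to lower terms and simplicity pins down $h$, or more directly one shows $V_1^a$ would be too large — closes the case and yields $h = 0$.

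The main obstacle I anticipate is handling the correction terms: $(Th)(1) = -h(0) + (\text{terms involving } L(1)h \text{ and higher})$, and making the spectral-symmetry argument for $h(0)$ fully rigorous requires invoking the nondegenerate invariant form on the simple strong-CFT VOA and checking that $h(0)$ is genuinely skew with respect to it on $V_1$ — i.e. the interaction between $L(1)h = 0$ and the invariance property $(u(n)v, w) = \pm(v, (\text{image of }u)(2\deg u - n - 2)w, \dots)$. If that skew-adjointness (equivalently, reality/imaginarity of the spectrum) fails to be available directly, the fallback is to argue as in Proposition \ref{sslie}: use that $h(0)$ semisimple with spectrum bounded above by $0$ on $V_1$ together with the fact that $h(0)h$ relates $h$ to itself, forcing the $h$-eigenspace decomposition to collapse unless $h = 0$.
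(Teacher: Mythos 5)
Your proposal has a genuine gap, and it is located exactly where the outline becomes vague. The decisive hypothesis in Lemma \ref{scft} is that $a$ itself is of \emph{strong} CFT type, i.e.\ $a(2)V_1^a=0$; your argument never invokes this condition, using only that $(V,a)$ is of CFT type together with properties of $(V,\omega)$. Without $a(2)V_1^a=0$ the statement is false: in the rank-one Heisenberg VOA, $a=\omega+\lambda T\alpha$ is a conformal vector of CFT type for every $\lambda$, and here $h=\lambda\alpha\in J_1(V)$, so $a$ passes every test in your outline (semisimplicity, spectral symmetry, $h(0)|_{V_1}=0$, even $h\in J_1(V)$) and lands squarely in your unfinished ``final short argument.'' Neither of the two closings you sketch there works: $V_1^a=V_1$ when $h\in J_1(V)$, so it is not ``too large,'' and $\exp(-h(1))$ does not pin down $h$. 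Separately, the intermediate step ``propagate $h(0)|_{V_1}=0$ to $h(0)=0$ on all of $V$'' is false in general ($V$ need not be generated by $V_1$; e.g.\ in a lattice VOA whose lattice has no roots, every $h\in V_1$ satisfies $h(0)|_{V_1}=0$ but $h(0)\neq 0$). The spectral-symmetry part itself is fine: $h(0)$ is skew-adjoint for the nondegenerate invariant form since $L(1)h=0$, its eigenvalues on $V_1$ are nonpositive integers by CFT type of $a$ together with $V_0^a=\mathbb{K}\bm 1$, hence $h(0)|_{V_1}=0$ — but this is where the usable part of the argument ends.

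The paper's proof closes the case you leave open by a short direct computation with the strong CFT condition for $a$. Li's theorem gives a nondegenerate invariant form on $V_1$ agreeing with $u,v\mapsto u(1)v$ up to scalar; since $(h,h(0)v)=\pm(h(0)h,v)=0$, the vector $h$ is orthogonal to every nonzero eigenspace of $h(0)$ on $V_1$, so nondegeneracy yields $b\in V_1$ with $h(0)b=0$ and $h(1)b\neq 0$. Then $a(1)b=(L(0)-h(0))b=b$, so $b\in V_1^a$, while $a(2)b=(L(1)-2h(1))b=-2h(1)b\neq 0$, contradicting $a(2)V_1^a=0$. This single step handles uniformly both the case $h\in J_1(V)$ (your Heisenberg-type obstruction) and the case $h(0)|_{V_1}=0$ but $h(0)\neq 0$ (your lattice-type obstruction). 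To repair your write-up you would need to replace the entire final paragraph with an argument of this kind; the eigenvalue bookkeeping alone cannot reach the conclusion.
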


\begin{proof}
 Let $a \in CV_{sCFT} \cap V_2$.
According to Lemma \ref{V2}, we may assume that $a=\omega + Th$, where $h \in V_1$.
Suppose that $h \neq 0$.
 Since the operator $a(1)$ is semisimple, and $L(0)$ commutes with $h(0)$, $h(0)$ is also semisimple.
 Since $V$ is a simple VOA of strong CFT type, according to [Li], the Lie algebra $V_1$ has a non-degenerate invariant bilinear form $( \; , \; )$, which coincides with the first product up to scalar factor.
Let $k \in \mathbb{K}$ and $v \in V_1$ such that $h(0)v=kv$.
Then, $0=(h(0)h,v)=(h,h(0)v)=k(h,v)$.
If $k \neq 0$, then $(h,v)=0$.
Since the bilinear form is non-degenerate,
 there exists a vector $b \in V_1$ such that $h(0)b=0$ and $(h,b) \neq 0$ (i.e., $h(1)b \neq 0$). 
Since $a(1)b=(L(0)-h(0))b=b$ and $a(2)b=(L(1)-2h(1))b=-2h(1)b \neq 0$, we have
$b \in V_1^a$ and $a(2)V_1^a \neq 0$, which contradicts the assumption that $a$ is a conformal vector of strong CFT type.
 Therefore, $h=0$ and $CV_{sCFT} \cap V_2 = \{ \omega \}$.
\end{proof}

At this point, our main result follows from Lemma \ref{pr2s}, Lemma \ref{scft} , Lemma \ref{J1} and Theorem \ref{mor}.

\begin{thm}[Main Theorem]
\label{main}
If $(V, \omega)$ is a simple VOA of strong CFT type,
then all the elements in $CV_{sCFT}$ are conjugate under $\mathrm{Aut}\,V$.
\end{thm}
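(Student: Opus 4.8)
The plan is to assemble the proof from the pieces already developed, the key point being that the excerpt has reduced the problem to two facts: (i) every conformal vector of strong CFT type can be moved, by an automorphism, into $V_{\geq 2}$ with prescribed degree-$2$ part, and (ii) such a vector is then conjugate to $\omega$ under $\mathrm{Aut}^+\,V$. Concretely, let $a \in CV_{sCFT}$ be arbitrary. First I would invoke Lemma \ref{J1}: since $a$ is in particular a conformal vector, $\mathrm{pr}_1(a) \in J_1(V)$ and the automorphism $g \coloneqq \exp(-\mathrm{pr}_1(a)(1))$ (which lies in $\mathrm{Aut}^-\,V$ by Lemma \ref{J1_exp}) satisfies $g(a) \in V_{\geq 2} \cap CV$. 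By Lemma \ref{auto_stable}, $CV_{sCFT}$ is $\mathrm{Aut}\,V$-stable, so $g(a) \in CV_{sCFT} \cap V_{\geq 2}$.

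Next I would apply Lemma \ref{pr2s} to $g(a)$: the truncation $\omega' \coloneqq \mathrm{pr}_2(g(a))$ is again a conformal vector of strong CFT type. Since $(V,\omega)$ is a simple VOA of strong CFT type, Lemma \ref{scft} gives $CV_{sCFT} \cap V_2 = \{\omega\}$, hence $\mathrm{pr}_2(g(a)) = \omega$. In the language of Section 3 this says precisely $g(a) \in CV^+$. Now $g(a)$ satisfies all hypotheses of Theorem \ref{mor}: it lies in $V_{\geq 2}$, has $\mathrm{pr}_2(g(a)) = \omega$, satisfies $g(a)(0) = T$ and $g(a)(1)g(a) = 2g(a)$ (these are conditions (4) and (1) for the conformal vector $g(a)$), and $g(a)(1)$ is semisimple with integer eigenvalues (condition (5), plus the fact that $g(a)$ is of CFT type so the eigenvalues lie in $\mathbb{N}$). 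Equivalently, by Corollary \ref{Aut_p}, $g(a) \in CV^+$ is in the image of $\phi$, so there is $\psi_{g(a)} \in \mathrm{Aut}^+\,V$ with $\psi_{g(a)}(\omega) = g(a)$.

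Combining, $\psi_{g(a)}^{-1} \circ g \in \mathrm{Aut}\,V$ sends $a$ to $\omega$, so $a$ and $\omega$ lie in the same $\mathrm{Aut}\,V$-orbit. Since $a \in CV_{sCFT}$ was arbitrary, all elements of $CV_{sCFT}$ are conjugate to $\omega$, hence to one another, under $\mathrm{Aut}\,V$.

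Since every substantive step is packaged in an already-proved lemma, there is no real obstacle remaining in the proof itself; the work lies entirely in the lemmas cited. If I had to name the delicate point of the overall argument, it is Lemma \ref{scft} — the rigidity statement $CV_{sCFT} \cap V_2 = \{\omega\}$ — where simplicity and the self-duality (strong CFT) hypothesis are genuinely used: one writes $a = \omega + Th$, uses semisimplicity of $h(0)$ together with the nondegenerate invariant form on $V_1$ to produce $b \in V_1$ with $h(0)b = 0$ but $h(1)b \neq 0$, and then $a(2)b = -2h(1)b \neq 0$ contradicts $a(2)V_1^a = 0$. The only thing to be careful about in stitching the pieces together is to check that the eigenvalues of $g(a)(1)$ are genuinely integers (not merely in $\mathbb{K}$): this follows because $g(a)$ is a conformal vector of CFT type, so $V = \bigoplus_{n \in \mathbb{N}} V_n^{g(a)}$ with $V_0^{g(a)} = \mathbb{K}\bm{1}$, matching the hypothesis of Theorem \ref{mor}.
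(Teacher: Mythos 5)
Your proof is correct and follows exactly the route the paper intends: Lemma \ref{J1} to move $a$ into $V_{\geq 2}$, Lemma \ref{pr2s} plus Lemma \ref{scft} to identify $\mathrm{pr}_2$ of the result with $\omega$, and Theorem \ref{mor} to conjugate it back to $\omega$. The paper gives only the one-line citation of these four results, so your write-up is simply a careful expansion of the same argument, including the correct check that the eigenvalues of $g(a)(1)$ are integers because $g(a)$ is of CFT type.
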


\section{Structure of the Vertex Algebra Automorphism group}
In this section, we prove Theorem \ref{grp} and Theorem \ref{classify}.
Recall that $\mathrm{Aut}^{0}\,V = \{f \in \mathrm{Aut}\, V \,|\, f(V_n)=V_n \text{ for all}\; n \in \mathbb{N}  \;   \}$.
Clearly,  $\mathrm{Aut}_{\omega}\,V \subset \mathrm{Aut}^{0}\,V$.

\begin{rem}
If $(V,\omega)$ is a simple VOA of strong CFT type, then $\mathrm{Aut}_{\omega}\,V = \mathrm{Aut}^{0}\,V$, for which see \cite[Corollary 4.11]{CKLW}.
\end{rem}

\begin{thm}
\label{grp}
For a simple VOA $(V,\omega)$ of strong CFT type, $\mathrm{Aut}\,V = \mathrm{Aut}^{-}\,V \mathrm{Aut}^{0}V \mathrm{Aut}^{+}\,V.$
More precisely, for $f \in \mathrm{Aut}\,V$, there exist unique elements $g \in \mathrm{Aut}^{-}\,V$, 
$h \in \mathrm{Aut}^{0}\,V$ and $k \in \mathrm{Aut}^{+}\,V$ such that $f=ghk$ holds. 
In particular, if $J_1(V)=0$, then $\mathrm{Aut}\,V \cong  \mathrm{Aut}^{+}\,V \rtimes \mathrm{Aut}^{0}\,V$.
\end{thm}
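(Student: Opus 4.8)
The plan is to build the decomposition in three stages, peeling off an element of $\mathrm{Aut}^+\,V$, then an element of $\mathrm{Aut}^0\,V$, then what remains will live in $\mathrm{Aut}^-\,V$. Let $f \in \mathrm{Aut}\,V$. First I would apply $f$ to the conformal vector: $b \coloneqq f(\omega) \in CV_{sCFT}$ by Lemma \ref{auto_stable}. The element $c \coloneqq \exp(-\mathrm{pr}_1(b)(1))\,b$ lies in $CV \cap V_{\geq 2}$ by Lemma \ref{J1}, and since it is of strong CFT type (this is the content of Lemma \ref{scft}/\ref{pr2s} combined with Lemma \ref{J1}) we get $\mathrm{pr}_2(c) \in CV_{sCFT} \cap V_2 = \{\omega\}$ by Lemma \ref{pr2s} and Lemma \ref{scft}; hence $c \in CV^+$. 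By Theorem \ref{mor} (equivalently Corollary \ref{Aut_p}) there is a unique $k_0 = \psi_c \in \mathrm{Aut}^+\,V$ with $k_0(\omega) = c$. Setting $g_0 = \exp(\mathrm{pr}_1(b)(1)) \in \mathrm{Aut}^-\,V$ (Lemma \ref{J1_exp}, Proposition \ref{Aut_m}), we have $g_0 k_0(\omega) = g_0(c) = b = f(\omega)$. Therefore $h_0 \coloneqq (g_0 k_0)^{-1} f$ fixes $\omega$, i.e. $h_0 \in \mathrm{Aut}_\omega\,V = \mathrm{Aut}^0\,V$ (using the Remark preceding the theorem, valid for simple strong-CFT-type $V$). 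Then $f = g_0 h_0 k_0$ with $g_0 \in \mathrm{Aut}^-\,V$, $h_0 \in \mathrm{Aut}^0\,V$, $k_0 \in \mathrm{Aut}^+\,V$, establishing existence.

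For uniqueness, suppose $g h k = g' h' k'$ with $g,g' \in \mathrm{Aut}^-\,V$, $h,h' \in \mathrm{Aut}^0\,V$, $k,k' \in \mathrm{Aut}^+\,V$. Apply both sides to $\omega$: since $h,h',k,k'$ fix $\omega$ (elements of $\mathrm{Aut}^0\,V \supset$ stabilizer, and $\mathrm{Aut}^+\,V$ fixes $\omega$? — no, careful: $k(\omega) \in CV^+$, not necessarily $\omega$). Instead I would argue as follows. From $ghk = g'h'k'$ we get $(g')^{-1}g = h'k'k^{-1}h^{-1}$. The left side is in $\mathrm{Aut}^-\,V$. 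The right side: $k'k^{-1} \in \mathrm{Aut}^+\,V$, and conjugating by $h' \in \mathrm{Aut}^0\,V$ (which preserves each $V_n$) keeps us in $\mathrm{Aut}^+\,V$, so the right side lies in $\mathrm{Aut}^+\,V$. Thus $(g')^{-1}g \in \mathrm{Aut}^-\,V \cap \mathrm{Aut}^+\,V$. An element $\phi$ in this intersection satisfies $\phi(V_n) \subset V_{\geq n} \cap V_{\leq n} = V_n$ and $\mathrm{pr}_n \circ \phi|_{V_n} = \mathrm{id}$, hence $\phi|_{V_n} = \mathrm{id}_{V_n}$ for all $n$, so $\phi = \mathrm{id}$. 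Therefore $g = g'$. Cancelling $g$, we get $hk = h'k'$, so $h^{-1}h' = k(k')^{-1} \in \mathrm{Aut}^0\,V \cap \mathrm{Aut}^+\,V = \{\mathrm{id}\}$ by the same braid of inequalities (an $\mathrm{Aut}^+\,V$ element preserving all $V_n$ with $\mathrm{pr}_n$ acting as identity is the identity). Hence $h = h'$ and $k = k'$.

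For the final assertion, suppose $J_1(V) = 0$. By Proposition \ref{Aut_m}, $\mathrm{Aut}^-\,V \cong J_1(V) = \{1\}$, so the decomposition collapses to $\mathrm{Aut}\,V = \mathrm{Aut}^0\,V\,\mathrm{Aut}^+\,V$ with uniqueness of the factorization. It remains to check that $\mathrm{Aut}^+\,V$ is normal and that the two subgroups intersect trivially. Triviality of the intersection is the computation just given. For normality, take $k \in \mathrm{Aut}^+\,V$ and $h \in \mathrm{Aut}^0\,V$; then $hkh^{-1}$ satisfies $hkh^{-1}(V_n) = hk(V_n) \subset h(V_{\geq n}) = V_{\geq n}$, and $\mathrm{pr}_n \circ hkh^{-1}|_{V_n} = h \circ (\mathrm{pr}_n \circ k|_{V_n}) \circ h^{-1}|_{V_n} = h \circ \mathrm{id} \circ h^{-1} = \mathrm{id}$ on $V_n$ since $h$ preserves $V_n$ and commutes with $\mathrm{pr}_n$; hence $hkh^{-1} \in \mathrm{Aut}^+\,V$. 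This gives the semidirect product decomposition $\mathrm{Aut}\,V \cong \mathrm{Aut}^+\,V \rtimes \mathrm{Aut}^0\,V$.

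The main obstacle I anticipate is the \emph{uniqueness} argument, specifically being careful that conjugating $\mathrm{Aut}^+\,V$ by $\mathrm{Aut}^0\,V$ lands back in $\mathrm{Aut}^+\,V$ (so that the two "triangular" factors can be disentangled on opposite sides of the middle factor) and that $\mathrm{Aut}^+\,V \cap \mathrm{Aut}^-\,V$ and $\mathrm{Aut}^+\,V \cap \mathrm{Aut}^0\,V$ are genuinely trivial — these rely only on the definitions of the three subgroups via the filtration conditions, but the bookkeeping of which projection/inclusion goes which way must be done precisely. The existence half is essentially a repackaging of Lemma \ref{J1}, Lemma \ref{scft}, Lemma \ref{pr2s}, and Theorem \ref{mor}, together with the identification $\mathrm{Aut}_\omega\,V = \mathrm{Aut}^0\,V$ in the simple strong-CFT-type case.
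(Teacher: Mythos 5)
Your overall strategy for existence matches the paper's: peel off $\exp(\mathrm{pr}_1(f(\omega))(1)) \in \mathrm{Aut}^{-}\,V$ and $\psi_c \in \mathrm{Aut}^{+}\,V$ via Lemmas \ref{J1}, \ref{pr2s}, \ref{scft} and Theorem \ref{mor}, so that what remains fixes $\omega$. However, both halves of your write-up contain the same order-of-composition slip, stemming from the fact that $\mathrm{Aut}^{0}\,V$ and $\mathrm{Aut}^{+}\,V$ do not commute. In the existence part, from $h_0=(g_0k_0)^{-1}f$ you get $f=g_0k_0h_0$, a factorization in the order $\mathrm{Aut}^{-}\,V\cdot\mathrm{Aut}^{+}\,V\cdot\mathrm{Aut}^{0}\,V$, not the asserted $f=g_0h_0k_0$. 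You still have to commute $k_0$ past $h_0$, i.e.\ write $f=g_0h_0(h_0^{-1}k_0h_0)$ and invoke the normality of $\mathrm{Aut}^{+}\,V$ under $\mathrm{Aut}^{0}\,V$ that you only verify in your final paragraph; this is exactly the identity $\psi_b\circ g=g\circ\psi_{g^{-1}(b)}$ that the paper checks at this point. In the uniqueness part, the expression $h'k'k^{-1}h^{-1}$ is \emph{not} a conjugate of $k'k^{-1}$ unless $h=h'$, which is precisely what you are trying to prove; so the claim that the right-hand side lies in $\mathrm{Aut}^{+}\,V$ is circular as stated. The repair is to write $h'(k'k^{-1})h^{-1}=\bigl(h'(k'k^{-1})h'^{-1}\bigr)\bigl(h'h^{-1}\bigr)\in\mathrm{Aut}^{+}\,V\cdot\mathrm{Aut}^{0}\,V$ and to prove the slightly stronger fact $\mathrm{Aut}^{-}\,V\cap\bigl(\mathrm{Aut}^{+}\,V\cdot\mathrm{Aut}^{0}\,V\bigr)=\{1\}$: if $\alpha\beta\in\mathrm{Aut}^{-}\,V$ with $\alpha\in\mathrm{Aut}^{+}\,V$ and $\beta\in\mathrm{Aut}^{0}\,V$, then for $v\in V_n$ one has $\alpha\beta(v)\in\beta(v)+V_{\geq n+1}$ while also $\alpha\beta(v)\in v+\bigoplus_{k<n}V_k$, so comparing degree-$n$ components gives $\beta(v)=v$, hence $\beta=1$ and then $\alpha\in\mathrm{Aut}^{+}\,V\cap\mathrm{Aut}^{-}\,V=\{1\}$. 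With these two corrections your argument is complete.

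For what it is worth, your uniqueness route is genuinely different from the paper's: the paper evaluates $f=\exp(h(1))\circ g\circ\psi_a$ on $\omega$ and reads off $h$ from $\mathrm{pr}_1(f(\omega))$, then $g$ from $\mathrm{pr}_n\circ g\circ\psi_a|_{V_n}=g|_{V_n}$, then $a=\psi_a(\omega)$, whereas yours is purely group-theoretic (trivial pairwise intersections plus normality) and does not use the parametrizations of $\mathrm{Aut}^{\pm}\,V$ by $J_1(V)$ and $CV^{+}$ at all, which makes it somewhat more robust once the circularity above is removed. The verification of normality and the final semidirect-product statement are fine.
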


\begin{proof}
Let $f \in \mathrm{Aut}\,V$.
First, we show the existence of the decomposition.
Set $a=f(\omega)$ and $b=\exp(-\mathrm{pr}_1(a)(1))a$.
Since $b \in CV_{sCFT} \cap V_{\geq 2}$,  we have $b \in CV^{+}$.
Let $g=\psi_b^{-1} \circ \exp(-\mathrm{pr}_1(a)(1)) \circ f $. 
Since $\psi_b^{-1} \circ \exp(-\mathrm{pr}_1(a)(1)) \circ f(\omega) =\omega$, we have $g \in \mathrm{Aut}_{\omega}\,V$.
It is easy to check that $\psi_b \circ g = g \circ \psi_{g^{-1}(b)}$.
Hence, we obtain the decomposition.

Next, we show the uniqueness.
Let $f=\exp(h(1)) \circ g \circ \psi_a=\exp(h'(1)) \circ g' \circ \psi_{a'}$, where
$h,h' \in J_1(V)$, $g,g' \in \mathrm{Aut}_{\omega}V$ and $a,a' \in CV^{+}$.
 Computations similar to Lemma \ref{J1} show that $\mathrm{pr}_1(f(\omega))= \mathrm{pr}_1(\exp(h(1)) \circ g \circ \psi_a(\omega))=h$ holds.
Hence, $h=h'$.
 Since $\psi_a \in \mathrm{Aut}^{+}\,V$, we have $\mathrm{pr}_n \circ g \circ \psi_a|_{V_n}=g|_{V_n}$.
 This implies that $g=g'$. Finally, by $\psi_a(\omega)=a$, we have $a=a'$.
Since $\mathrm{Aut}_{\omega}\,V$ preserves the degrees, $\mathrm{Aut}\,V \cong  \mathrm{Aut}^{+}\,V \rtimes \mathrm{Aut}^{0}\,V$ 
if $J_1(V)=0$.
\end{proof}

\begin{thm}
\label{classify}
If $(V,\omega)$ is a simple VOA of strong CFT type,
then the natural map $(V_2 \cap CV_{CFT})/ \mathrm{Aut}_\omega\,V \rightarrow CV_{CFT}/\mathrm{Aut}\,V$ is a bijection.
\end{thm}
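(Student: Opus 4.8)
The plan is to show that the natural map
$(V_2 \cap CV_{CFT})/ \mathrm{Aut}_\omega\,V \rightarrow CV_{CFT}/\mathrm{Aut}\,V$
is both surjective and injective. Surjectivity is already available: it is exactly the content of Corollary \ref{surj_cft}, which did not use simplicity, and so the only real work is injectivity. Thus the goal reduces to the following statement: if $a, b \in V_2 \cap CV_{CFT}$ are conjugate under $\mathrm{Aut}\,V$, then they are conjugate under $\mathrm{Aut}_\omega\,V$ (the stabilizer of the \emph{fixed} reference vector $\omega$).

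First I would take $f \in \mathrm{Aut}\,V$ with $f(a) = b$ and feed it through the decomposition of Theorem \ref{grp}: write $f = g h k$ with $g \in \mathrm{Aut}^-\,V$, $h \in \mathrm{Aut}^0\,V = \mathrm{Aut}_\omega\,V$ (the identification using the Remark, valid since $V$ is simple of strong CFT type), and $k \in \mathrm{Aut}^+\,V$. The point is to track where the pieces send $a$ and $b$, exploiting that both live in the single graded piece $V_2$. Since $k \in \mathrm{Aut}^+\,V$, we have $k(a) \in a + V_{\geq 3}$, i.e.\ $\mathrm{pr}_2(k(a)) = a$; applying $h \in \mathrm{Aut}^0\,V$ preserves each $V_n$, so $\mathrm{pr}_2(h k (a)) = h(a)$ and $hk(a) \in h(a) + V_{\geq 3}$; finally $g \in \mathrm{Aut}^-\,V$ sends this into $h(a) + V_{\geq 3}$ intersected with lower-degree corrections, but more usefully $\mathrm{pr}_2(g(v)) = \mathrm{pr}_2(v) + (\text{lower})$, and combined with $b \in V_2$ one gets $\mathrm{pr}_2(ghk(a))$ expressed in terms of $h(a)$. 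Since $b = f(a) \in V_2$ and by Proposition \ref{Aut_m} an element of $\mathrm{Aut}^-\,V$ is $\exp(v(1))$ for $v \in J_1(V)$, I would use the explicit action of $\exp(v(1))$ on a degree-$2$ conformal vector (mirroring the computation in Lemma \ref{J1}: $\exp(v(1))$ fixes $\mathrm{pr}_2$ up to terms built from $v$) to pin down $\mathrm{pr}_2(b) = b$ against $h(a) \in V_2$. The upshot should be $b = h(a)$ with $h \in \mathrm{Aut}_\omega\,V$, giving the desired conjugacy within $\mathrm{Aut}_\omega\,V$.

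The main obstacle I anticipate is the $\mathrm{Aut}^-\,V$ factor: $g = \exp(v(1))$ with $v \in J_1(V)$ need not fix $V_2$, and it can move a degree-$2$ element down into $V_1 \oplus V_0$. So the bookkeeping must rule out any genuine lowering contribution to $b$. The clean way is: since $b \in V_2$ and $\mathrm{pr}_1(b) = 0 = \mathrm{pr}_0(b)$, and since for $w = hk(a)$ with $\mathrm{pr}_2(w) = h(a)$ one computes $\mathrm{pr}_1(\exp(v(1)) w)$ and $\mathrm{pr}_0(\exp(v(1)) w)$ as explicit expressions in $v$ and $h(a)$ (again by the Lemma \ref{J1}-style calculation, using $h(a)(1)v$ etc.), the vanishing of these forces $v = 0$ — here one invokes that on a simple VOA of strong CFT type $V_1$ carries a non-degenerate invariant form (as in the proof of Lemma \ref{scft}), so the pairing-type constraint $h(a)(1) v = v$ together with $v \in J_1(V)$, i.e.\ $v(0) = 0$, forces $v = 0$, exactly as $h = 0$ was forced there. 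With $g = \mathrm{id}$ we then have $f = hk$, so $b = hk(a) = h(a) + (\text{higher})$; but $b \in V_2$ and $h(a) \in V_2$ forces the higher part to vanish, hence $b = h(a)$, concluding injectivity.

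Assembling: surjectivity from Corollary \ref{surj_cft}, injectivity from the argument above, so the map is a bijection. I would write this as a short proof citing Theorem \ref{grp}, Proposition \ref{Aut_m}, the $V_1$-nondegeneracy input used in Lemma \ref{scft}, and the degree-tracking Lemma \ref{bot}, rather than redoing the $\exp(v(1))$ expansions in full.
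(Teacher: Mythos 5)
Your skeleton matches the paper's: surjectivity is exactly Corollary \ref{surj_cft}, and injectivity is attacked through the decomposition $f=ghk$ of Theorem \ref{grp} together with degree bookkeeping, ending with $a'=h(a)$ for $h\in\mathrm{Aut}^0\,V=\mathrm{Aut}_\omega\,V$. The final degree-$2$/top-degree tracking is fine. The genuine gap is in how you eliminate the $\mathrm{Aut}^-\,V$ factor $g=\exp(v(1))$. The mechanism you propose --- that the non-degenerate invariant form on $V_1$ together with $v(0)=0$ forces $v=0$ --- cannot be correct: it would prove $J_1(V)=0$ for \emph{every} simple VOA of strong CFT type, contradicting the Heisenberg example discussed in the paper, where $J_1(V)\neq 0$ and $\mathrm{Aut}^-\,V\cong\mathbb{R}^n$ is nontrivial. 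The argument of Lemma \ref{scft} that you are mirroring uses essentially that the conformal vector is of \emph{strong} CFT type (one needs $a(2)V_1^a=0$ to reach the contradiction), whereas here $a$ and $a'$ are only of CFT type, so that route is closed. Moreover, $\mathrm{pr}_1\bigl(\exp(v(1))\,hk(a)\bigr)$ is not an expression in $v$ and $h(a)$ alone: it also involves the $V_{\geq 3}$ components contributed by $k$, which you have not controlled at that stage, so setting it to zero does not by itself isolate $v$.

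The needed conclusion $v=0$ is true, and the clean argument requires no invariant form. Writing $a=\omega+Tu$ with $u\in V_1$ (Lemma \ref{V2}), one has $f(\omega)=f(a)-Tf(u)=a'-Tf(u)$, and $\mathrm{pr}_1(Tf(u))=T\,\mathrm{pr}_0(f(u))=0$ since $TV_0=0$; hence $\mathrm{pr}_1(f(\omega))=\mathrm{pr}_1(a')=0$. The uniqueness computation in the proof of Theorem \ref{grp} identifies $\mathrm{pr}_1(f(\omega))$ with the $J_1(V)$-parameter of the $\mathrm{Aut}^-\,V$ factor of $f$ (this is where the $\mathrm{Im}\,T$ bookkeeping of Lemma \ref{J1} absorbs the contribution of $k$, because $k(\omega)-\omega\in\mathrm{Im}\,T$ and $\mathrm{pr}_1(\mathrm{Im}\,T)=0$). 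So $g=\mathrm{id}$ follows at once, and then $a'=f(a)=h(k(a))=h(a)+(\text{terms in }V_{\geq 3})$ forces $a'=h(a)$, as you conclude. This is in substance what the paper does: it uses $\mathrm{pr}_1(f(a))=\mathrm{pr}_1(a')=0$ to trivialize one unipotent factor of the decomposition and then reads off $a'=\mathrm{pr}_2(f(a))=\omega+Th(u)$.
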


\begin{proof}
According to Corollary \ref{surj_cft}, this map is surjective.
Let $a, a' \in V_2 \cap CV_{CFT}$, satisfying $f(a)=a'$ for some $f \in \mathrm{Aut}\,V$.
By Lemma \ref{V2}, we may assume that $a=\omega + Tb$ and $a'=\omega+Tb'$, where $b, b' \in V_1$.
Let $g \in \mathrm{Aut}^{-}\,V$, 
$h \in \mathrm{Aut}^{0}\,V$, and $k \in \mathrm{Aut}^{+}\,V$, satisfying $f=ghk$.
Since $\mathrm{pr}_1(f(a))=\mathrm{pr}_1(a')=0$, we have $k=1$.
Since $a'=\mathrm{pr}_2(f(a))=\mathrm{pr}_2(gh(\omega+Tb))=\mathrm{pr}_2(g(\omega)+Tgh(b))=\omega+Th(b)$,
we have $b'=h(b)$. Thus, $h(a)=a'$, which implies that the map $(V_2 \cap CV_{CFT})/ \mathrm{Aut}_\omega\,V \rightarrow CV_{CFT}/\mathrm{Aut}\,V$ is injective.
\end{proof}

\section{Vertex Operator Algebra with Positive-Definite Invariant Form}

In this section, we examine VOAs over the real number field $\mathbb{R}$.
Let $(V, \omega)$ be a simple VOA of strong CFT type over  $\mathbb{R}$.
We assume that the invariant bilinear form $( , )$ is positive-definite.

Set $Q_n=\{v \in V_n \;|\; L(1)v=0 \}$.
Since $(V, \omega)$ is a simple VOA of strong CFT type, $V_n=Q_n \bigoplus L(-1)V_{n-1}$ holds.

The following lemma was proved in \cite[(3.16)]{Mi}.

\begin{lem}
\label{pos_l}
For all $n \in \mathbb{N}$ and $v \in V_n$,
$(\bm{1},v(2n-1)v)(-1)^n \geq 0$  and the equality holds if and only if $v=0$.
\end{lem}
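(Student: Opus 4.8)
\textbf{Proof proposal for Lemma \ref{pos_l}.}

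The plan is to reduce the statement to the positive-definiteness of the invariant bilinear form $(\,,\,)$ by expressing $(\bm 1, v(2n-1)v)$ in terms of $(v,v)$ up to a sign and a positive combinatorial factor. Recall that for a VOA of CFT type the invariant form is normalized by $(\bm 1, \bm 1) = 1$ and is characterized by $(u, a(k)v) = (a^{*}(\text{shifted}) u, v)$, where the adjoint of $a(k)$ for $a$ homogeneous of weight $\mathrm{wt}\,a$ is given by the standard formula $a^{*}(k) = \sum_{j \geq 0} \frac{1}{j!}(L(1)^{j}a)(2\,\mathrm{wt}\,a - k - 2 - j)$ acting with the opposite sign pattern $(-1)^{\mathrm{wt}\,a}$ (the FHL/Li adjoint-vertex-operator formula).

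First I would treat the case $v \in Q_n$, i.e. $L(1)v = 0$; this is the heart of the matter because of the decomposition $V_n = Q_n \oplus L(-1)V_{n-1}$ recorded just above the lemma. For such $v$, weight $n$, the adjoint formula collapses to a single term: the operator $Y(v,z)^{*}$ paired against $\bm 1$ picks out $(\bm 1, v(2n-1)v) = (-1)^{n}(v(-1)\bm 1, v) = (-1)^{n}(v,v)$, using $v(-1)\bm 1 = v$ and that all the correction terms vanish since $L(1)v=0$. By positive-definiteness of $(\,,\,)$ on the real VOA, $(v,v) > 0$ for $v \neq 0$ and $(v,v)=0$ iff $v=0$, which gives the claim in this case: $(\bm 1, v(2n-1)v)(-1)^{n} = (v,v) \geq 0$ with equality iff $v=0$.

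Next I would handle a general $v \in V_n$ by writing $v = q + L(-1)u$ with $q \in Q_n$, $u \in V_{n-1}$. The strategy is to show that the bilinear pairing $(x,y) \mapsto (\bm 1, x(2n-1)y)(-1)^{n}$ on $V_n$ is symmetric, agrees with $(x,y)$ up to the sign $(-1)^{n}$ — or more carefully, that it equals $(x,y)$ on $V_n$ outright — so that the general case follows from positive-definiteness directly. Concretely, I would verify that $(\bm 1, x(2n-1)y)$ is (up to $(-1)^n$) exactly the invariant form restricted to $V_n$: invariance of $(\,,\,)$ forces $(x,y)$ to be supported on matching weights, and on $V_n$ the coefficient extracting formula $(x,y) = (\bm 1, x(2\,\mathrm{wt}\,x - 1)y)(-1)^{\mathrm{wt}\,x} \cdot (\text{lower } L(1)\text{-corrections})$ must be reconciled with the cross terms; the cross terms involving $L(-1)u$ are controlled using $L(-1)^{*} = L(1)$ and $(\bm 1, (L(-1)u)(k)y) $ rewritten via the $L(-1)$-derivative property $(L(-1)u)(k) = -k\, u(k-1)$.

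The main obstacle I anticipate is precisely this bookkeeping for the non-primary part: one must show the contribution of $L(-1)V_{n-1}$ does not spoil positivity, i.e. that $(\bm 1, v(2n-1)v)(-1)^n$ genuinely equals $(v,v)$ and not merely agrees with it on $Q_n$. I expect this to follow cleanly once one observes that $v(2n-1)$ is the weight-zero mode (the ``$o(v)$''-type mode sending $\bm 1$-pairings to the form) and that the invariant form is by construction $(u,v) = (\bm 1, u(2\,\mathrm{wt}\,u-1 - \text{corrections})v)$; since for us everything lives in the single weight space $V_n$, the only genuinely needed input beyond the adjoint formula is $L(1)^{j}v \in V_{n-j}$ being paired against something in $V_n$, which vanishes for $j \geq 1$ by weight reasons inside the $(\bm 1, -)$ pairing. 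So in fact the correction terms drop for \emph{all} $v \in V_n$, not just $v \in Q_n$, and the identity $(\bm 1, v(2n-1)v)(-1)^n = (v,v)$ holds on all of $V_n$; positivity is then immediate. I would present the argument in that streamlined order: state the adjoint formula, extract the single surviving term by weight-counting, identify it with $(v,v)$, invoke positive-definiteness.
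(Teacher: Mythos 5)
There is a genuine gap, and it is located exactly where you declared victory: the claim that the $L(1)$-correction terms in the adjoint formula ``drop for \emph{all} $v\in V_n$ by weight reasons'' is false. Writing the invariance as
\[
(v,w)=(v(-1)\bm 1,w)=(-1)^n\sum_{j\ge 0}\tfrac{1}{j!}\bigl(\bm 1,(L(1)^jv)(2n-1-j)w\bigr),
\]
the $j$-th term involves $(L(1)^jv)(2n-1-j)w$ with $L(1)^jv\in V_{n-j}$ and $w\in V_n$, which lands in $V_{(n-j)+n-(2n-1-j)-1}=V_0$ for \emph{every} $j$ — the shift of the mode index is built into the adjoint formula precisely so that all terms have total degree zero. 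So the $j\ge 1$ terms pair nontrivially with $\bm 1$ and do not vanish unless $L(1)v=0$. Consequently the identity $(\bm 1,v(2n-1)v)(-1)^n=(v,v)$ holds only on $Q_n$ and fails on $L(-1)V_{n-1}$: for $v=Tb$ with $b\in Q_{n-1}$ one computes $(\bm 1,Tb(2n-1)Tb)(-1)^n=(2n-1)(2n-2)(b,b)$, whereas $(Tb,Tb)=2(n-1)(b,b)$; already for $n=2$ these differ by the factor $3$. Your streamlined final paragraph therefore proves nothing beyond the primary case.

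Your middle paragraph is closer to the right argument, but it is still incomplete. What is actually needed, and what the paper does, is: (i) the primary case $(\bm 1,q(2n-1)q)(-1)^n=(q,q)\ge 0$ for $q\in Q_n$, exactly as in your first step; (ii) vanishing of the cross terms between $Q_n$ and $\mathrm{Im}\,T$, using $(\bm 1,\mathrm{Im}\,T)=0$ and the orthogonality $(Q_n,TV_{n-1})=0$; and (iii) — the step entirely missing from your proposal — an \emph{induction on $n$}, because the diagonal term $(\bm 1,Tb(2n-1)Tb)(-1)^n$ reduces via $[b(2n-2),T]=(2n-2)b(2n-3)$ to $(2n-1)(2n-2)(\bm 1,b(2n-3)b)(-1)^{n-1}$, which is the same quantity one level down for a general (not necessarily primary) $b\in V_{n-1}$ and cannot be identified with $(Tb,Tb)$. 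Without the induction you cannot close the argument, and with your final-paragraph shortcut you would be asserting a false identity.
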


For the convenience of the reader we repeat the proof from [Mi], thus making our exposition self-contained.

\begin{proof}
First, let $v \in Q_n$. Since $(v,v)=(\bm{1},v(2n-1)v)(-1)^n$ and the invariant bilinear form is positive-definite, our assertion clearly holds.
We apply induction to the degree $n$.
The assertion is clear if $n=0$ and $n=1$, since $V_1=Q_1$.
For $n \geq 2$, assume that the assertion is true for $n-1$.
Let $v \in V_n$. We may assume that $v$ takes the form $a+Tb$, where $a \in Q_n$ and $b \in V_{n-1}$. Then,
$(\bm{1},v(2n-1)v)(-1)^n =(\bm{1},a(2n-1)a+a(2n-1)Tb+Tb(2n-1)a+Tb(2n-1)Tb)(-1)^n. $
Since $a \in Q_n$, the first term on the right, viz.,  $(\bm{1},a(2n-1)a)(-1)^n$, is non-negative.
By $(\bm{1},\mathrm{Im}\,T)=0$, 
$(\bm{1},(Tb)(2n-1)a)=(\bm{1},a(2n-1)Tb-T(a(2n)Tb)+...)=(\bm{1},a(2n-1)Tb)$ holds.
Furthermore, $(\bm{1},a(2n-1)Tb)=(a,Tb)(-1)^n=0$ because $Q_n$ and $\mathrm{Im}\,T$ are orthogonal.
Finally, we have
\begin{eqnarray}
(\bm{1},Tb(2n-1)Tb)(-1)^n&=&-(2n-1)(\bm{1},[b(2n-2),T]b+T(b(2n-2)b))(-1)^n \nonumber \\
&=&(2n-1)(2n-2)(\bm{1},b(2n-3)b)(-1)^{n-1} \geq 0 \nonumber
\end{eqnarray}
 by the induction hypothesis.
Hence, $(\bm{1},v(2n-1)v)(-1)^n \geq 0$.
If the equality holds, then $(a,a)=0$ and 
$((2n-1)(2n-2)(\bm{1},b(2n-3)b)(-1)^{n-1})=0$ follow from the computations presented above. Since $n \geq 2$, again, according to the induction hypothesis, $a=0$ and $b=0$, which is the desired conclusion.
\end{proof}


\begin{lem}
\label{nil}
If $CV \nsubset V_0 \oplus V_1\oplus V_2$, then there exist $n \in \mathbb{N}$ and a non-zero vector $v \in V_n$ such that
$v(2n-1)v=0$ and $n \geq 3$.
\end{lem}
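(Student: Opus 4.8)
The plan is to produce the required vector as the \emph{leading homogeneous component} of a conformal vector which, by hypothesis, fails to be concentrated in degrees $0,1,2$.

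First I would use the hypothesis $CV\nsubset V_0\oplus V_1\oplus V_2$ to choose a conformal vector $a\in CV$ with $\mathrm{pr}_n(a)\neq 0$ for some $n\geq 3$. Since $a$ lies in the direct sum $V=\bigoplus_{m\geq 0}V_m$, it has only finitely many nonzero homogeneous components, so I may set $N:=\max\{m\mid \mathrm{pr}_m(a)\neq 0\}$ and $v:=\mathrm{pr}_N(a)$; then $0\neq v\in V_N$ and $N\geq 3$.

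Next I would extract the top-degree part of the operator products of $a$ with itself. Writing $a=\sum_{i=0}^N a_i$ with $a_i=\mathrm{pr}_i(a)$ and using $V_i(k)V_j\subset V_{i+j-k-1}$ (the same mechanism as in Lemma~\ref{bot}), for every integer $k$ the only pair $(i,j)$ with $0\leq i,j\leq N$ and $i+j-k-1=2N-k-1$ is $(i,j)=(N,N)$; hence $\mathrm{pr}_{2N-k-1}\bigl(a(k)a\bigr)=v(k)v$. Applying this with $k=2N-1$: since $N\geq 3$ we have $2N-1\geq 5\geq 4$, so condition~(3) in the characterization of conformal vectors gives $a(2N-1)a=0$, and therefore $v(2N-1)v=\mathrm{pr}_0\bigl(a(2N-1)a\bigr)=0$. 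Taking $n:=N$ then finishes the argument. (If one wants it, the same extraction with $k=0,1,2,3$ together with conditions (1)--(4) shows more generally that $v(k)v=0$ for all $k\geq 0$, but only the case $k=2n-1$ is needed here.)

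There is essentially no obstacle beyond the degree bookkeeping: the entire content of the hypothesis is that the leading degree $N$ can be taken $\geq 3$, which is exactly what makes $2N-1\geq 4$ so that the vanishing axiom~(3) applies; neither the derivation axiom, nor semisimplicity, nor positive-definiteness of the invariant form enters this lemma (positive-definiteness, via Lemma~\ref{pos_l}, will be used only when this lemma is combined with it afterwards). The one point I would state carefully is that an element of the direct sum $\bigoplus_{m\geq 0}V_m$ has finite support, which is what legitimizes the definition of the leading component $v$ and hence the maximum defining $N$.
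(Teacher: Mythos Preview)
Your proof is correct and follows essentially the same route as the paper: take the top homogeneous component $v=\mathrm{pr}_N(a)$ of a conformal vector $a\notin V_0\oplus V_1\oplus V_2$, note $N\geq 3$ so that $2N-1\geq 4$ and hence $a(2N-1)a=0$ by the OPE axiom~(3), and then project to degree~$0$ via Lemma~\ref{bot} to conclude $v(2N-1)v=0$. The paper's argument is identical in structure, only more terse.
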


\begin{proof}
Let $a \in CV \backslash  (V_0 \oplus V_1\oplus V_2)$. Set $a_k=\mathrm{pr}_k(a)$ and $n=\mathrm{max} \{\;k\;|\;a_k \neq 0 \;\}$.
Since $a \notin V_0 \oplus V_1\oplus V_2$ , $n \geq 3$.
We have $a(2n-1)a=0$, because $a$ is a conformal vector.
Hence, $0=\mathrm{pr}_0(a(2n-1)a)=a_n(2n-1)a_n$.
\end{proof}

By combining the above lemmas, we have $CV \subset V_0 \oplus V_1\oplus V_2$.
Hence, to classify conformal vectors, it suffices to consider conformal vectors in $CV \cap V_2$ by Lemma \ref{J1}. 
According to Lemma \ref{V2}, they are of the form $\omega + Th $, where $h \in V_1$
and $h(0)$ is semisimple on $V$. The following lemma shows that $h \in J_1(V)$.

\begin{lem}
If $h \in V_1$, $v \in V$ and $0 \neq k \in \mathbb{R}$ satisfy $h(0)v=kv$, then $v=0$. 
\end{lem}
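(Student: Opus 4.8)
The statement says: if $h \in V_1$ has a nonzero eigenvalue $k$ for $h(0)$ on some vector $v$, then $v=0$. The natural strategy is to exploit the positive-definiteness of the invariant form together with the fact that $h(0)$ acts on $V_1$ (since $V_1$ is a subalgebra under the bracket $[a,b]=a(0)b$, so $h(0)V_1 \subset V_1$) and to locate a contradiction in low-degree components. The key point is that on a VOA with a positive-definite invariant form, every grade-preserving operator coming from $h(0)$ with $h \in V_1$ must be skew-symmetric with respect to $(\,,\,)$, hence has purely imaginary eigenvalues when viewed on the complexification; but here we are forced into a genuine real eigenvalue.

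**Key steps.** First I would observe that $h(0)$ commutes with $L(0)$ (this is part of the axioms, $[L(0),h(0)]$ lowers no degree since $h \in V_1$ means $h(0)$ preserves each $V_n$), so we may assume $v \in V_n$ for some fixed $n$, and since $V = \bigoplus_{n\in\mathbb N} V_n$ is of CFT type, $n \geq 0$. If $n = 0$ then $v \in \mathbb R\bm 1$ and $h(0)\bm 1 = 0$ forces $v = 0$ already, so assume $n \geq 1$. Next, using the invariance of the bilinear form, $h(0)$ is skew-symmetric: for $u,w \in V_n$, $(h(0)u, w) = -(u, h(0)w)$, up to the standard sign bookkeeping from skew-symmetry of vertex operators (the invariance identity $(\mathrm{Y}(a,z)u,w) = (u,\mathrm{Y}(e^{zL(1)}(-z^{-2})^{L(0)}a,z^{-1})w)$ specialized to $a=h \in V_1$ with $L(1)h=0$). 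Then for an eigenvector $h(0)v = kv$ with $v \in V_n$, we get $k(v,v) = (h(0)v, v) = -(v, h(0)v) = -k(v,v)$, so $2k(v,v) = 0$. Since $k \neq 0$ this gives $(v,v) = 0$, and positive-definiteness of $(\,,\,)$ on $V_n$ forces $v = 0$.

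**Main obstacle.** The only delicate point is verifying the skew-symmetry of $h(0)$ under the invariant form with the correct signs — one must be careful that $h \in V_1$ (conformal weight one) is exactly the case where the $(-1)^{L(0)}$ and $e^{zL(1)}$ correction terms in the invariance formula conspire to give a clean skew-symmetry rather than a symmetry or a messier relation; concretely, $e^{zL(1)}(-z^{-2})^{L(0)}h = -z^{-2}h$ because $L(1)h = 0$ (strong CFT type!) and $h$ has weight one, and then extracting the $0$-th mode picks up the extra sign. So the proof genuinely uses the strong CFT hypothesis via $L(1)V_1 = 0$. A subsidiary point: one should confirm $v$ may be taken homogeneous, which follows since $h(0)$ preserves the grading, so its eigenspaces decompose along $\bigoplus_n V_n$ and any eigenvector is a sum of homogeneous eigenvectors with the same eigenvalue $k$; if all of them vanish so does $v$. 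Everything else is a one-line computation with the positive-definite form.
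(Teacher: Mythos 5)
Your proof is correct and follows essentially the same route as the paper: the paper's argument is precisely the computation $k(v,v)=(h(0)v,v)=-(v,h(0)v)=-k(v,v)$ followed by positive-definiteness, exactly as in your key step. Your additional care about reducing to homogeneous $v$ and verifying the skew-symmetry of $h(0)$ via $L(1)h=0$ is sound but is just an expansion of what the paper leaves implicit (it cites the analogous computation in its Lemma on $CV_{sCFT}\cap V_2$).
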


\begin{proof}
Similarly to the proof of Lemma \ref{scft}, we have $k(v,v)=(h(0)v, v)=-(v,h(0)v)=-k(v,v)$. Hence, $v=0$.
\end{proof}

By the above argument, we have the following:
\begin{thm}
\label{pos}
Let  $(V, \omega)$ be a simple VOA of strong CFT type over  $\mathbb{R}$.
Suppose that the invariant bilinear form is positive-definite.
 Then, $CV = \{ (1/2)a(1)a+a+\omega+Tb \;|\;a, b \in J_1(V)\}$ and $\mathrm{Aut}\,V=\mathrm{Aut}^{-}\,V \rtimes \mathrm{Aut}_{\omega}\,V$. In particular, if $J_1(V)=0$, then $CV = \{ \omega \}$ and $\mathrm{Aut}\,V=\mathrm{Aut}_{\omega}\,V$.
\end{thm}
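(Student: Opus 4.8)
The plan is to assemble the theorem from the three ingredients already prepared in this section, feeding them into the decomposition results of Sections 5 and 6. First I would establish that $CV \subset V_0 \oplus V_1 \oplus V_2$: Lemma \ref{pos_l} says $(\bm{1}, v(2n-1)v)(-1)^n > 0$ for every nonzero $v \in V_n$, while Lemma \ref{nil} says that a conformal vector with a nonzero homogeneous component in degree $n \geq 3$ would force the existence of a nonzero $v \in V_n$ with $v(2n-1)v = 0$. These two facts are directly contradictory, so no conformal vector can have a component outside $V_0 \oplus V_1 \oplus V_2$.

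Next, using Lemma \ref{J1}, any $a \in CV$ can be moved by $\exp(-\mathrm{pr}_1(a)(1))$ into $V_{\geq 2} \cap CV$, hence (by the first step) into $V_2 \cap CV$; and by Lemma \ref{V2} such an element has the form $\omega + Th$ with $h \in V_1$, where semisimplicity of $a(1) = L(0) - h(0)$ forces $h(0)$ semisimple. The last displayed lemma of the section then shows every nonzero eigenvalue of $h(0)$ is zero, i.e. $h(0) = 0$, so $h \in J_1(V)$; combined with Lemma \ref{scft} this yields $V_2 \cap CV = \{\omega\}$. Tracing back through $\exp(\mathrm{pr}_1(a)(1))$ and the computation in the proof of Lemma \ref{J1} (which evaluates $\exp(b(1))\omega = \omega + b + \tfrac{1}{2}b(1)b$ for $b \in J_1(V)$), one recovers the explicit description $CV = \{ (1/2)a(1)a + a + \omega + Tb \mid a, b \in J_1(V)\}$, the parameter $a$ coming from the $\exp$-conjugation and $Tb \in \mathrm{Im}\,T$ being the remaining freedom inside $V_2$ allowed by Lemma \ref{zero}.

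For the automorphism group statement, I would invoke Theorem \ref{grp}: $\mathrm{Aut}\,V = \mathrm{Aut}^- V \cdot \mathrm{Aut}^0 V \cdot \mathrm{Aut}^+ V$ with uniqueness of the factorization. By Corollary \ref{Aut_p}, $\mathrm{Aut}^+ V \cong CV^+ = \{a \in CV \cap V_{\geq 2} \mid \mathrm{pr}_2(a) = \omega\}$; but we have just shown $CV \cap V_{\geq 2} = \{\omega\}$, so $CV^+ = \{\omega\}$ and $\mathrm{Aut}^+ V = \{1\}$. Using the remark that $\mathrm{Aut}^0 V = \mathrm{Aut}_\omega V$ for a simple VOA of strong CFT type, and noting that $\mathrm{Aut}^- V \cong J_1(V)$ (Proposition \ref{Aut_m}) is normalized by $\mathrm{Aut}_\omega V$ because a VOA automorphism fixing $\omega$ preserves the grading and the Lie algebra $V_1$ and hence $J_1(V)$, the product $\mathrm{Aut}^- V \cdot \mathrm{Aut}_\omega V$ with trivial intersection becomes the semidirect product $\mathrm{Aut}^- V \rtimes \mathrm{Aut}_\omega V$. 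The final clause $J_1(V) = 0 \Rightarrow CV = \{\omega\}$, $\mathrm{Aut}\,V = \mathrm{Aut}_\omega V$ is then immediate.

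I expect the first step — ruling out components in degree $\geq 3$ — to be the conceptual heart of the argument, and it is exactly where positive-definiteness enters (everything afterward is bookkeeping with results already in hand); but since Lemmas \ref{pos_l} and \ref{nil} are assumed proved, the only genuine care needed here is checking that the semidirect-product structure is legitimate, i.e. that $\mathrm{Aut}_\omega V$ really normalizes $\mathrm{Aut}^- V$ and that the two subgroups intersect trivially (the latter because a degree-preserving element of $\mathrm{Aut}^- V$ is the identity).
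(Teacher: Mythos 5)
Your overall route is exactly the paper's: combine Lemmas \ref{pos_l} and \ref{nil} to force $CV \subset V_0\oplus V_1\oplus V_2$, reduce an arbitrary conformal vector to $V_2\cap CV$ via Lemma \ref{J1}, identify the degree-two piece as $\omega+Th$ with $h\in J_1(V)$ via Lemma \ref{V2} and the eigenvalue lemma, and then kill $\mathrm{Aut}^{+}\,V$ inside the factorization of Theorem \ref{grp} using $\mathrm{Aut}^{0}\,V=\mathrm{Aut}_{\omega}\,V$.

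There is, however, a localized error: the claims ``combined with Lemma \ref{scft} this yields $V_2\cap CV=\{\omega\}$'' and later ``we have just shown $CV\cap V_{\geq 2}=\{\omega\}$'' are false whenever $J_1(V)\neq 0$, and the first rests on a misapplication of Lemma \ref{scft}, which only controls $CV_{sCFT}\cap V_2$. The vectors $\omega+Tb$ with $0\neq b\in J_1(V)$ are conformal vectors of CFT type but \emph{not} of strong CFT type (here $(\omega+Tb)(1)=L(0)$, so $V_1^{\omega+Tb}=V_1$, while $(\omega+Tb)(2)V_1=-2b(1)V_1\neq 0$ by nondegeneracy of the form), so in fact $V_2\cap CV=\{\omega+Tb\mid b\in J_1(V)\}$ --- which is what your own final formula for $CV$ implicitly requires, so your write-up is internally inconsistent at this point. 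The conclusions you draw nevertheless survive with a small repair: membership in $CV^{+}$ demands $\mathrm{pr}_2(a)=\omega$ in addition to $a\in CV\cap V_{\geq 2}$, and since $CV\cap V_{\geq 2}\subset V_2$ one gets $a=\mathrm{pr}_2(a)=\omega$, whence $CV^{+}=\{\omega\}$ and $\mathrm{Aut}^{+}\,V=\{1\}$. With that correction the remainder of your argument (unique factorization from Theorem \ref{grp}, normalization of $\mathrm{Aut}^{-}\,V$ by $\mathrm{Aut}_{\omega}\,V$, trivial intersection) coincides with the paper's proof.
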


\begin{rem}
Matsuo and Nagatomo \cite{MN} have shown that the full vertex algebra automorphism group of the Heisenberg VOA of rank $1$ and level $1$ over $\mathbb{C}$ is isomorphic to  $\mathbb{C} \rtimes \Z/ 2\Z$. 
The above theorem states that the full vertex algebra automorphism group of the Heisenberg VOA
 of rank $n$ and level $1$ over $\mathbb{R}$ is $\mathbb{R}^n \rtimes O(n,\mathbb{R})$, where $O(n,\mathbb{R})$ is the orthogonal group.
\end{rem}

Let us make some remarks on the positive-definite invariant bilinear form.
A unitary VOA is a VOA over $\mathbb{C}$ with a positive-definite invariant Hermitian  form and an anti-involution $\sigma$  (see \cite{DL} for a precise definition).
If $V$ is a unitary VOA with an anti-involution $\sigma$, then $V^\sigma=\{v \in V\, |\,
\sigma(v)=v \}$ is a VOA over $\mathbb{R}$ with a positive-definite invariant bilinear form, which gives a one-to-one correspondence between unitary VOAs and VOAs over
$\mathbb{R}$ with a positive-definite invariant bilinear form (see for example \cite[Remark 5.4]{CKLW}).


Hence, we have:

\begin{cor}
\label{uni}
Let $V$ be a simple unitary VOA of strong CFT type with an anti-involution $\sigma$.
Then, $\mathrm{Aut}\,(V^\sigma) \cong \mathrm{Aut}^{-}\,V^\sigma \rtimes \mathrm{Aut}_{\omega}\,V^\sigma$.
Furthermore, if $J_1(V)=0$,
then $\mathrm{Aut}\,(V^\sigma) \cong \mathrm{Aut}_{\omega}\,V^\sigma$.
\end{cor}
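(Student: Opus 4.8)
The plan is to deduce everything from Theorem \ref{pos} applied to the real form $V^\sigma$, so the first step is to verify that $(V^\sigma,\omega)$ satisfies the hypotheses of that theorem: it is a simple VOA of strong CFT type over $\mathbb{R}$ whose invariant bilinear form is positive-definite. The last two properties are exactly the cited correspondence between unitary VOAs and real VOAs with a positive-definite invariant bilinear form, so I would simply invoke it; note in particular that the anti-involution $\sigma$ fixes the vacuum and the conformal vector and preserves the $L(0)$-grading, so $(V^\sigma)_n = V^\sigma \cap V_n$ and $V \cong V^\sigma \otimes_{\mathbb{R}} \mathbb{C}$ as $\mathbb{Z}$-graded vertex algebras. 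Strong CFT type then descends directly: $\dim_{\mathbb{R}}(V^\sigma)_n = \dim_{\mathbb{C}} V_n < \infty$, $(V^\sigma)_0 = \mathbb{R}\bm{1}$, and $L(1)$ still annihilates $(V^\sigma)_1$. Simplicity descends because any nonzero ideal of $V^\sigma$ would complexify to a nonzero proper ideal of $V$, contradicting simplicity of $V$.

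Once these transfers of hypotheses are in place, Theorem \ref{pos} applied to $(V^\sigma,\omega)$ gives $\mathrm{Aut}\,(V^\sigma) = \mathrm{Aut}^{-}\,V^\sigma \rtimes \mathrm{Aut}_\omega\,V^\sigma$, which is the first assertion. For the second assertion I would observe that $J_1(V)=0$ forces $J_1(V^\sigma)=0$: if $a \in J_1(V^\sigma)$ then $a \in V_1$ and $a(0)=0$ on $V^\sigma$, hence on $V = V^\sigma \otimes_{\mathbb{R}} \mathbb{C}$ by $\mathbb{C}$-linearity, so $a \in J_1(V)=0$ (equivalently, $J_1(V^\sigma)\otimes_{\mathbb{R}}\mathbb{C} = J_1(V)$). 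Then either the "in particular" clause of Theorem \ref{pos}, or directly the isomorphism $J_1(V^\sigma) \cong \mathrm{Aut}^{-}\,V^\sigma$ of Proposition \ref{Aut_m}, yields $\mathrm{Aut}^{-}\,V^\sigma = \{1\}$, and the semidirect product collapses to $\mathrm{Aut}\,(V^\sigma) \cong \mathrm{Aut}_\omega\,V^\sigma$.

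I do not anticipate a serious obstacle: the corollary is a formal consequence of Theorem \ref{pos}, and the only mildly delicate point is checking that "simple", "strong CFT type", and "positive-definite invariant form" are genuinely inherited by the fixed-point real subalgebra $V^\sigma$. All three follow from the fact that $V^\sigma$ is a real form of $V$ with $\sigma(\omega)=\omega$ together with the cited correspondence, so the write-up should be short.
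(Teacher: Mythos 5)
Your proposal is correct and follows essentially the same route as the paper, which derives the corollary directly from Theorem \ref{pos} via the cited correspondence between unitary VOAs and real VOAs with positive-definite invariant bilinear form; the paper simply leaves the routine verifications (descent of simplicity, strong CFT type, and $J_1(V)=0\Rightarrow J_1(V^\sigma)=0$, the last combined with Proposition \ref{Aut_m}) implicit, and you have filled them in correctly.
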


We remark that simple affine VOAs of level $k \in \mathbb{Z}_{>0}$, lattice VOAs and the moonshine module are unitary [DL]. They also satisfy the condition $J_1(V)=0$.

\begin{cor}
\label{mon}
The moonshine module $V^\natural$ over $\mathbb{R}$ has only one conformal vector,
and its full vertex algebra automorphism group is the Monster.
\end{cor}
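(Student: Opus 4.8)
The plan is to deduce everything from the machinery already established, so the corollary becomes essentially a matter of verifying hypotheses. First I would record the two structural facts about the moonshine module $V^\natural$ that are classical: $(V^\natural,\omega)$ is a simple VOA of strong CFT type (indeed $V^\natural$ has $V^\natural_0 = \mathbb{K}\bm{1}$, $V^\natural_1 = 0$, finite-dimensional graded pieces, and is self-dual), and its VOA automorphism group $\mathrm{Aut}_\omega\,V^\natural$ is the Monster $\mathbb{M}$ by the Frenkel--Lepowsky--Meurman / Griess / Borcherds results. The point I want to isolate is $V^\natural_1 = 0$, which forces $J_1(V^\natural) = \{a \in V^\natural_1 \mid a(0) = 0\} = 0$.

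Next I would invoke the real-form picture. The moonshine module carries a positive-definite invariant bilinear form (over $\mathbb{R}$), equivalently $V^\natural$ is unitary with a natural anti-involution $\sigma$ and $V^\natural = (V^\natural_{\mathbb{C}})^\sigma$ in the notation of Corollary \ref{uni}; this is stated in the excerpt just before Corollary \ref{mon} (the moonshine module is unitary and satisfies $J_1 = 0$). So the hypotheses of Theorem \ref{pos} (equivalently Corollary \ref{uni}) are met: $(V^\natural,\omega)$ over $\mathbb{R}$ is a simple VOA of strong CFT type with positive-definite invariant form, and $J_1(V^\natural) = 0$.

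Now I would simply quote Theorem \ref{pos}: in the case $J_1(V) = 0$ it gives $CV = \{\omega\}$, i.e. $\omega$ is the unique conformal vector of $V^\natural$ over $\mathbb{R}$, and $\mathrm{Aut}\,V^\natural = \mathrm{Aut}_\omega\,V^\natural$. Combining with $\mathrm{Aut}_\omega\,V^\natural \cong \mathbb{M}$ yields that the full vertex algebra automorphism group of $V^\natural$ over $\mathbb{R}$ is the Monster. (One should note that here $\mathrm{Aut}_\omega\,V^\natural$ means the \emph{real} VOA automorphism group; since the complex form is unitary with anti-involution $\sigma$ fixed exactly by the Monster's action, the real and complex VOA automorphism groups agree, so no loss occurs.)

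The only genuine content beyond bookkeeping is the input that $\mathrm{Aut}_\omega\,V^\natural \cong \mathbb{M}$ and that the positive-definite form exists; both are cited from the literature (\cite{FLM}, \cite{Gr}, \cite{Bo1}), so there is no real obstacle internal to this paper. If I had to point to the step needing the most care, it would be making sure the identification of the \emph{real} VOA automorphism group with $\mathbb{M}$ is clean — i.e. that passing to the $\sigma$-fixed real form does not shrink or enlarge the automorphism group — but this is immediate from the one-to-one correspondence between unitary VOAs and real VOAs with positive-definite form recalled before the corollary.
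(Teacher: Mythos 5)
Your proposal is correct and follows exactly the route the paper intends: the moonshine module over $\mathbb{R}$ is a simple VOA of strong CFT type with positive-definite invariant form and $J_1(V^\natural)=0$ (since $V^\natural_1=0$), so Theorem \ref{pos} gives $CV=\{\omega\}$ and $\mathrm{Aut}\,V^\natural=\mathrm{Aut}_\omega\,V^\natural$, which is the Monster by \cite{FLM,Gr,Bo1}. Your extra care about identifying the real VOA automorphism group with the Monster is a reasonable point that the paper leaves implicit, but it introduces no gap.
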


\begin{rem}
\label{rational}
If $V$ is a rational $C_2$-cofinite simple VOA of strong CFT type, then $J_1(V)=0$. This follows immediately from \cite[Theorem 3]{Ma}.
Hence, if $V$ is a rational $C_2$-cofinite simple unitary VOA of strong CFT type with an anti-involution $\sigma$,
then the vertex algebra automorphism group of $V^\sigma$ coincides with the VOA automorphism group. 
\end{rem}

\section*{Acknowledgements}
The author would like to thank his research supervisor, Professor Atsushi Matsuo, for his
insightful comments and advice. The author also wishes to express his gratitude to \,\,\,\,  Hiroki Shimakura for carefully examining the manuscript and for his valuable comments. This work was supported by the Program for Leading Graduate Schools, 
MEXT, Japan.

\end{document}